\theoremstyle{definition}
\newtheorem{Def}[]{Definition}[section]
\newtheorem{Pro}[Def]{Proposition}
\newtheorem{Rem}[Def]{Remark}
\newtheorem{Lem}[Def]{Lemma}
\newtheorem{Exp}[Def]{Example}
\newtheorem{Theo}[Def]{Theorem}
\newtheorem{Cor}[Def]{Corollary}
\newtheorem{Con}[Def]{Construction}
\newtheorem{Conv}[Def]{Convention}
\newtheorem*{Ques*}{Question}
\newtheorem*{Theo*}{Theorem}
\DeclareMathOperator{\Ext}{Ext}
\DeclareMathOperator{\Hom}{Hom}
\DeclareMathOperator{\RHom}{\mathbb{R}\strut\kern-.2em\operatorname{Hom}}
\DeclareMathOperator{\Aut}{Aut}
\DeclareMathOperator{\rad}{rad}
\DeclareMathOperator{\Ker}{Ker}
\DeclareMathOperator{\End}{End}
\DeclareMathOperator{\SL}{SL}
\DeclareMathOperator{\GL}{GL}
\DeclareMathOperator{\Db}{\mathrm{D}^{\mathrm{b}}}
\DeclareMathOperator{\Irr}{Irr}
\DeclareMathOperator{\Stab}{Stab}
\DeclareMathOperator{\diag}{diag}
\newcommand{\bbone}{\text{\usefont{U}{bbold}{m}{n}1}}
\renewcommand{\mod}{\operatorname{mod}}
\renewcommand{\Bbbk}{\mathbb{C}}
\renewcommand{\Im}{\operatorname{Im}}
\title[$2$-RI algebras from non-abelian subgroups of $\SL_3$. Part I]{$2$-representation infinite algebras from non-abelian subgroups of $\SL_3$\\[0.8em]\smaller{}\smaller{} \text{Part I: Extensions of abelian groups}}
\author[Darius Dramburg]{Darius Dramburg}
\address{\emph{Corresponding author:} Darius Dramburg, Department of Mathematics, Uppsala University, Box 480, 75106 Uppsala, Sweden}
\author[Oleksandra Gasanova]{Oleksandra Gasanova}
\address{Oleksandra Gasanova, Faculty of Mathematics, University of Duisburg-Essen, 45127 Essen, Germany}
\email{darius.dramburg@math.uu.se, oleksandra.gasanova@uni-due.de}
\date{\today}
\begin{document}

\begin{abstract}
    Let $G \leq \SL_3(\mathbb{C})$ be a non-trivial finite group, acting on $R = \mathbb{C}[x_1, x_2, x_3]$. The resulting skew-group algebra $R \ast G$ is $3$-Calabi-Yau, and can sometimes be endowed with the structure of a $3$-preprojective algebra. However, not every such $R \ast G$ admits such a structure. The finite subgroups of $\SL_3(\Bbbk)$ are classified into types (A) to (L). We consider the groups $G$ of types (C) and (D) and determine for each such group whether the algebra $R \ast G$ admits a $3$-preprojective cut, that is a $3$-preprojective structure arising from a grading of the McKay quiver of $G$. We show that the algebra $R \ast G$ admits a $3$-preprojective cut if and only if $9 \mid |G|$. Our proof is constructive and yields a description of the involved $2$-representation infinite algebras. This is based on the semi-direct decomposition $G \simeq N \rtimes K$ for an abelian group $N$, and we show that the existence of a $3$-preprojective structure on $R \ast G$ is essentially determined by the existence of one on $R \ast N$. This provides new classes of $2$-representation infinite algebras, and we discuss some $2$-Auslander-Platzeck-Reiten tilts. Along the way, we give a detailed description of the involved groups and their McKay quivers by iteratively applying skew-group constructions.
\end{abstract}

\maketitle

\section{Introduction}
Let $G \leq \SL_n(\mathbb{C})$ be a (non-trivial) finite group. Then $G$ acts naturally on $\mathbb{C}^n$, and hence on the polynomial ring $R = \mathbb{C}[x_1, \ldots, x_n]$. The resulting skew-group algebra, or twisted tensor product, $R \ast G = R \otimes \mathbb{C}G$ is a prototypical example of an $n$-Calabi-Yau algebra. Such algebras appear in noncommutative geometry, cluster theory, mirror symmetry and mathematical physics. Our interest in this setup stems from the role that certain Calabi-Yau algebras play in Iyama's higher dimensional Auslander-Reiten theory. In the case of skew-group algebras $R \ast G$ as above, this is part of a general McKay correspondence. 

In the classical McKay correspondence \cite{MR0604577}, the finite subgroups $G \leq \SL_2(\mathbb{C})$ are studied via their McKay quivers $Q_G$, which turn out to be doubled versions of the extended Dynkin diagrams of type $\Tilde{A}$, $\Tilde{D}$ or $\Tilde{E}$. The skew-group algebra $R \ast G$ is Morita equivalent to a quotient of the path algebra $\mathbb{C}Q_G/I$ by commutativity relations $I$. One recovers the hereditary tame representation infinite algebras by choosing an acyclic orientation of $Q_G$. Moreover, this choice of orientation determines a locally finite grading of Gorenstein parameter $1$ on $\mathbb{C}Q_G/I$, which turns $\mathbb{C}Q_G/I$ into the preprojective algebra of its degree $0$ part. More precisely, by the work of Donovan-Freislich \cite{DonovanFreislich} and simultaneously by Nazarova \cite{nazarova1973representations}, any basic connected hereditary tame representation infinite algebra is isomorphic to a path algebra $\mathbb{C}Q $ of an acyclic quiver $Q$ whose underlying undirected graph is an extended Dynkin diagram of type $\Tilde{A}$, $\Tilde{D}$ or $\Tilde{E}$. Denote by $D(-) = \Hom_\mathbb{C}(-, \mathbb{C})$ the $\mathbb{C}$-duality. The preprojective algebra \[ \Pi_2(\mathbb{C}Q) = \operatorname{T}_{\mathbb{C}Q} \Ext^1_{\mathbb{C}Q}(D(\mathbb{C}Q), \mathbb{C}Q)\] 
of $\mathbb{C}Q$ is graded by tensor degrees, and is isomorphic to $\mathbb{C}Q_G/I$ for some finite subgroup $G \leq \SL_2(\mathbb{C})$, where the tensor grading on $\Pi_2(\mathbb{C}Q)$ corresponds to a choice of orientation of the underlying extended Dynkin diagram of $Q_G$. 

Note that in the classical case above, $R \ast G$ is $2$-Calabi-Yau, yet the original definitions of Calabi-Yau triangulated categories and Calabi-Yau algebras were, due to their role in string theory and mirror symmetry, $3$-Calabi-Yau. Nowadays, the role of the Serre functor has led to the more general concept of $(n+1)$-Calabi-Yau algebras, and the notion of $(n+1)$-preprojective algebras is avalaible as well. Therefore, it is in general interesting to ask which $(n+1)$-Calabi-Yau algebras can be endowed with an $(n+1)$-preprojective structure, as posed in \cite{Thibault}. 

Yet, the case of finite subgroups of $\SL_3(\Bbbk)$ is particularly interesting for two reasons. From the perspective of higher Auslander-Reiten theory, we are constructing $3$-preprojective algebras of $2$-representation infinite algebras, which is the first ``higher'' dimension than the classical case of $1$-representation infinite algebras. Furthermore, already in dimension $3$ there exist groups $G \leq \SL_3(\mathbb{C})$ for which $R \ast G$ does not admit a $3$-preprojective structure \cite{Thibault}, contrary to the classical case of subgroups of $\SL_2(\mathbb{C})$, and understanding this failure is interesting in its own right.

In this article, we consider skew-group algebras $R \ast G$ where $G \leq \SL_3(\Bbbk)$ is a finite group of so-called type (C) or (D). This terminology of types is standard in the classification of finite subgroups of $\SL_3(\Bbbk)$ \cite{YauYu}. The groups of type (C) and (D) are constructed by choosing some (non-trivial) finite abelian group $A \leq \SL_3(\Bbbk)$, consisting of diagonal matrices, and adding the permutation matrix $t$ of a $3$-cycle and a monomial transposition matrix $r$ to generate $G = \langle A, t \rangle$ of type (C) or $G = \langle A, t, r \rangle $ of type (D). 

To put the types (C) and (D) into perspective, we recall that the classification of finite subgroups of $\SL_3(\Bbbk) $ gives rise to types (A) to (L). Type (A) consists of (diagonal) abelian groups. The existence of higher preprojective structures for type (A) was investigated in dimension $3$ in \cite{DramburgGasanova}, and in arbitrary dimension in \cite{DramburgGasanova2}. Types (C) and (D) arise from type (A) by iterated extensions, and we show that they contain an abelian normal subgroup, which largely controls the behavior of these groups. 
Type (B), in contrast, consists of groups arising from the embedding $\GL_2(\Bbbk) \hookrightarrow \SL_3(\Bbbk)$. The remaining types (E) to (L) are finitely many exceptional cases and are amenable to direct computation. The investigation of types (B) and (E) to (L) is done in \cite{Dramburg1}.  

We determine exactly which groups $G$ of type (C) and (D) give rise to a skew-group algebra $R \ast G$ that admits a $3$-preprojective cut. Cuts are those higher preprojective structures which arise from grading the quiver of $R \ast G$ directly, and we believe that up to isomorphism, no other $3$-preprojective structures can exist on $R \ast G$. The main classification result can be phrased as follows. 
\begin{Theo*}[\Cref{classification}]
    Let $G \leq \SL_3(\Bbbk)$ be a finite group of type (C) or (D). Then $G \simeq N \rtimes K$, where $N$ is abelian, and $K \simeq C_3$ in type (C) and $K \simeq S_3$ in type (D). The skew-group algebra $R \ast G$ admits a $3$-preprojective cut if and only if $3 \mid |N|$, which happens if and only if $9 \mid |G|$.  
\end{Theo*}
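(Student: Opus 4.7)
The plan is to first establish the structural decomposition $G \simeq N \rtimes K$, then dispose of the arithmetic equivalence $3 \mid |N| \iff 9 \mid |G|$, and finally reduce the main claim on $R \ast G$ to the analogous statement on $R \ast N$, which is covered by the type (A) classification of \cite{DramburgGasanova, DramburgGasanova2}.

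For the structural claim, let $T \leq \SL_3(\mathbb{C})$ denote the diagonal torus and set $N := G \cap T$. Since the generators of a type (C) or (D) group are monomial matrices, so is every element of $G$, and sending a monomial matrix to its underlying permutation defines a surjection $G \twoheadrightarrow K \leq S_3$ with kernel exactly $N$. For type (C) the image is the alternating subgroup $\langle (1\,2\,3) \rangle \simeq C_3$, generated by the image of $t$; for type (D) the image is all of $S_3$, generated by the images of $t$ and $r$. A splitting is obtained by observing that $\langle t \rangle$ in type (C) and $\langle t, r \rangle$ in type (D) are complements to $N$ inside $G$, as they meet $T$ trivially by construction. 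The arithmetic equivalence $3 \mid |N| \iff 9 \mid |G|$ is then immediate from $|G| = |N| \cdot |K|$ with $|K| \in \{3, 6\}$.

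The central step is to show that $R \ast G$ admits a $3$-preprojective structure if and only if $R \ast N$ does. Via the semidirect decomposition one has an identification $R \ast G \simeq (R \ast N) \ast K$, and I would analyze how $3$-preprojective gradings interact with this further skew-group extension. A $3$-preprojective structure on $R \ast N$ corresponds to a nonnegative $\mathbb{Z}$-grading of Gorenstein parameter $1$ compatible with the $3$-Calabi-Yau structure, and since $N$ is abelian, such a grading arises from a character $\chi \colon N \to \mathbb{C}^*$ whose order divides $3$ and whose values on the coordinate characters of $R$ sum to $1$ modulo $3$. The plan is to show that whenever such a $\chi$ exists it can be chosen to be $K$-invariant, in which case it extends to the required grading on $(R \ast N) \ast K$ by placing $K$ in degree $0$; conversely, any $3$-preprojective grading on $R \ast G$ restricts to one on $R \ast N$ through the subalgebra inclusion $R \ast N \hookrightarrow R \ast G$.

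The main obstacle I anticipate is the compatibility step: one must control the $K$-action on the character group $\hat{N}$, which is induced by $K$ permuting coordinates and, in type (D), also inverting them, and verify that $K$-invariant characters of the right order exist precisely when $3 \mid |N|$. Once this reduction is in place, the type (A) classification in \cite{DramburgGasanova, DramburgGasanova2} supplies the equivalence that $R \ast N$ is $3$-preprojective if and only if $3 \mid |N|$, closing the chain: $R \ast G$ is $3$-preprojective if and only if $R \ast N$ is $3$-preprojective, if and only if $3 \mid |N|$, if and only if $9 \mid |G|$.
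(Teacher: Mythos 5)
Your overall skeleton for the forward direction (decompose $G = N \rtimes K$, find a $K$-invariant $3$-preprojective grading on $R \ast N$, extend it across the skew-group construction) matches the paper, but two of your steps are genuinely wrong. First, in type (D) the subgroup $\langle t, r \rangle$ is \emph{not} a complement to $N$: the element $r^2 = \diag(\alpha\beta, \alpha\beta, \gamma^2)$ is diagonal and nontrivial unless $\gamma = -1$ and $\alpha\beta = 1$, so $\langle t, r \rangle \cap N \neq 1$ in general. The paper has to work for the splitting: it replaces $\langle t,r\rangle$ by the group generated by the explicit involutions $i_1 = tr^2t^{-1}r$ and $i_2 = t^2r^2t^{-1}rt^{-1}$, verifies $i_1^2 = i_2^2 = (i_1i_2)^3 = I_3$, and checks that $t$ and $r$ factor through $N \cdot \langle i_1, i_2\rangle$. (Also, the $K$-action on $\hat{N}$ is by permuting the three coordinate characters only; there is no inversion in type (D).) Second, and more seriously, the equivalence you propose to prove --- $R \ast G$ admits a $3$-preprojective structure iff $R \ast N$ does --- is false. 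Take $N = \frac{1}{7}(1,2,4)$: it is invariant under cyclic permutation of coordinates, so $G = N \rtimes C_3$ is of type (C); the type (A) classification gives a cut of type $(1,2,4)$ on $Q_N$, yet $3 \nmid 7$, so $R \ast G$ admits no $3$-preprojective structure. What is true, and what the paper proves, is that $R \ast G$ admits one iff $Q_N$ admits a \emph{$K$-invariant} cut, and $K$-invariance forces the cut type to be $(|N|/3, |N|/3, |N|/3)$, whence $3 \mid |N|$. Your ``restrict the grading to $R \ast N$'' argument for the converse, even if the restriction could be shown to be a $3$-preprojective grading (which is not automatic), would only land you at the false weaker statement.

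The paper's converse is entirely different: when $3 \nmid |N|$ it exhibits a vertex of $Q_N$ whose $K$-orbit is free and which has an arrow to another vertex in its orbit, so that $Q_G = Q_N \ast K$ acquires a loop, and loops obstruct higher preprojective cuts by \Cref{Cor: loops prevent cuts}; one exceptional case $N \simeq C_2 \times C_2$, $G \simeq A_4 \times C_2$, is handled by direct computation. You would need either this loop argument or some genuine control over \emph{all} possible gradings on $R \ast G$, not just those restricting nicely to $R \ast N$. Finally, for the forward direction you correctly flag the $K$-invariance question as the main obstacle, but note that even the existence of a cut of the symmetric type $(|N|/3, |N|/3, |N|/3)$ is not automatic from $3 \mid |N|$: it requires the divisibility criterion of the type (A) classification, which the paper verifies using the constraints $k_1 \mid k_2^2 - k_2 + 1$ (type (C)) and $k_1 \mid \gcd(k_2-2,3)$ (type (D)) on the lattice matrix $B$, themselves consequences of the $K$-invariance of the lattice $L$.
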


This theorem hinges on the semi-direct decomposition $G \simeq N \rtimes K$.  The condition $3 \mid |N|$ is enough to ensure that a $K$-invariant $3$-preprojective cut exists on $R \ast N$. This structure can then be extended to $R \ast G$. This extension procedure is based on computing first the McKay quiver for $N$, and then computing the McKay quiver for $G$ by ``skewing the quiver'' as developed in \cite{ReitenRiedtmann, Demonet, LeMeur}. The same methods do not directly apply to type (B), since these groups do not always admit a similar semi-direct decomposition. 

The same distinction whether $|N|$ is divisible by $3$ has been made in \cite{ItoTrihedral} when computing crepant resolutions and the Euler characteristic of the quotient singularity $\mathbb{C}^3/G$. In particular, the construction of a crepant resolution depends on whether $3 \mid |N|$. Furthermore, one can see that $3 \mid |N|$ is equivalent to requiring that $G \supseteq Z(\SL_3(\Bbbk))$. 

We also point out the following parallel to the McKay correspondence, in particular for type (C) as in \cite{ItoTrihedral}. In type (A), it follows from \cite{DramburgGasanova, DramburgGasanova2} that $3$-preprojective cuts, up to some appropriate equivalence, on $R \ast G$ for abelian $G \leq \SL_n(\mathbb{C})$ are in bijection with compact exceptional crepant toric divisors of $\operatorname{Spec}(R^G)$.
In types (C) and (D), we can decompose $G \simeq N \rtimes K$ so that $N$ is of type (A). Since the $3$-preprojective structures on $R \ast N$ correspond to certain divisors in $\operatorname{Spec}(R^N)$, it would be interesting to understand how the $3$-preprojective structures on $R \ast G$ correspond to crepant divisors in $\operatorname{Spec}(R^G)$, and whether the geometric construction in \cite{ItoTrihedral} mirrors the construction of extending an $K$-invariant $3$-preprojective structure on $R \ast N$ to $R \ast G$.

\subsection{Outline}
In \Cref{Sec: Preliminaries}, we collect the necessary background material on quivers, higher Auslander-Reiten theory and skew-group algebras. 

We summarise our results on abelian groups in $\SL_3(\Bbbk)$ from \cite{DramburgGasanova} in \Cref{Sec: type (A)} since they will be needed for the other cases. 

In \Cref{Sec: (C) and (D)}, we cover the groups of types (C) and (D) together because many of the results can be developed in parallel. We describe the groups $G$, as well as their semi-direct decomposition $G \simeq N \rtimes K$ in \Cref{SSec: Group structure}. This allows us to compute their McKay quivers and cuts in \Cref{SSec: McKay and cuts}. Our main result is \Cref{classification}, showing that $3$-preprojective cuts on $R \ast G$ exist if and only if $3 \mid |N|$. We conclude by discussing in \Cref{SSec: Skewing and unskewing} how to transfer certain invariant $3$-preprojective gradings from $R \ast N$ to $R \ast G$ and back.  

\section{Preliminaries}\label{Sec: Preliminaries}
\subsection{Conventions and Notation}
We work over the field $\mathbb{C}$ throughout, but mention that the same results hold over any algebraically closed field of characteristic $0$. In this article, all undecorated tensors $\otimes$ are taken over $\Bbbk$. We denote isomorphism by $\simeq$ and Morita equivalence by $\simeq_M$. For $a,b \in \mathbb{Z}$ and $n \in \mathbb{N}$ we write $a \equiv_n b$ if the residue classes $ a + n\mathbb{Z} = b + n \mathbb{Z}$ agree. We write $a \bmod n$ for the smallest nonnegative representative of the class $a + n \mathbb{Z}$. 

Let $\epsilon$ be a primitive $n$-th root of unity, then we denote by $\frac{1}{n}(a_1, \ldots, a_m)$ the diagonal matrix $\operatorname{diag}(\epsilon^{a_1} , \ldots, \epsilon^{a_m})$. While the notation depends on which primitive $n$-th root is chosen, we often simply write $\frac{1}{n}(a_1, \ldots, a_m)$ when the choice does not matter. 
 
\subsection{Quivers for nonnegatively graded algebras}
Let $\Gamma = \bigoplus_{i \geq 0} \Gamma_i$ be a nonnegatively graded algebra. We are mainly interested in \emph{locally finite} graded algebras, that means graded algebras for which $\dim(\Gamma_i)$ is finite for each $i$. We will often want to speak of \emph{the quiver} of $\Gamma$, but since $\Gamma$ can be infinite dimensional we need to take some care. For a natural number $n$, we denote by $\Gamma_{\geq n} = \bigoplus_{i \geq n} \Gamma_i$ the component of $\Gamma$ of degree at least $n$. The easiest definition of the quiver we want to talk about is given by simply truncating $\Gamma$.  

\begin{Def}
    Let $\Gamma = \bigoplus_{i \geq 0} \Gamma_i$ be a nonnegatively graded algebra, generated in degrees $0$ and $1$, and such that $\Gamma_0$ and $\Gamma_1$ are finite dimensional. Then the quiver of $\Gamma$ is defined to be the quiver of the finite dimensional algebra $\Gamma/\Gamma_{\geq 2}$.  
\end{Def}

Let us now justify this definition. To start, note that $\Gamma_1$ is a $\Gamma_0$-bimodule, so we can form the tensor algebra 
\[ \operatorname{T}_{\Gamma_0}( \Gamma_1). \]
This algebra is graded by tensor degree, and we obtain an obvious surjection 
\[ \operatorname{T}_{\Gamma_0} (\Gamma_1) \to \Gamma. \]
With the correct basicness assumptions, this setup produces the quiver and the ideal we expect. 

\begin{Con}\label{Con: Quiver for graded alg}
    Suppose that $\Gamma_0$ is basic. Since it is finite dimensional, we have  $\Gamma_0 \simeq \Bbbk(Q')/I'$ for a unique quiver $Q'$ and admissible ideal $I'$. We choose a minimal set $S$ of generators of $\Gamma_1/\rad_{\Gamma_0^e}(\Gamma_1)$ as a $\Gamma_0/\rad_{\Gamma_0^e}(\Gamma_0)$-bimodule. Furthermore, we choose this set so that it is compatible with the action of $\Bbbk Q_0$. More precisely, we denote by $e_i$ the idempotent corresponding to the vertex $i \in Q_0$, and choose $S$ so that it can be partitioned into $S = \bigcup_{i,j} S_{i,j}$ such that $e_i S_{i',j'} e_j = S_{i,j}$ if and only if $i= i'$ and $j=j'$, and $e_i S_{i',j'} e_j = \{ 0 \}$ otherwise. Since the enveloping algebra $\Gamma_0^e$ is finite dimensional, we can apply Nakayama's lemma to the finite dimensional $\Gamma_0$-bimodule $\Gamma_1$ to lift $S$ modulo the radical to a generating set $S'$ of $\Gamma_1$ as a $\Gamma_0$-module. We add these generators $S'$ as new arrows to $Q'$ to obtain a quiver $Q \supseteq Q'$. Hence we obtain a surjection $\Bbbk Q \to \Gamma$, taking $\Bbbk Q'$ to $\Gamma_0$, and the new arrows to the corresponding generators in $\Gamma_1$. We grade $\Bbbk Q$ by placing $Q'$ in degree $0$ and the new arrows in degree $1$. This way, the kernel $I = \Ker(\Bbbk Q \twoheadrightarrow \Gamma)$ becomes a homogeneous ideal in $\Bbbk Q$, and we have $\Bbbk Q/I \simeq \Gamma$ as graded algebras. Furthermore, $I$ is ``admissible'' in the sense that $I_0 = I'$, and that $I \subseteq \langle Q_1 \rangle^2 $, since $S$ was a minimal generating set. The construction only depends on $\Gamma_0$ and $\Gamma_1$, hence it is easy to see that $Q$ is the same quiver as the one of $\Gamma/\Gamma_{\geq 2}$. 
\end{Con}

\begin{Pro}\label{Pro: Quivers for graded algebras}
    Let $\Gamma_0$ be basic and $Q \supseteq Q'$ be as in \Cref{Con: Quiver for graded alg}. Then $Q$ is the same quiver as that of $\Gamma/\Gamma_{\geq 2}$. 
\end{Pro}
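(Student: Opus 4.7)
My plan is to identify the Gabriel quiver of $\bar{\Gamma} := \Gamma/\Gamma_{\geq 2}$ by computing its radical and showing that the result matches $Q$ from \Cref{Con: Quiver for graded alg} on the nose.

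First, I would observe that $\bar{\Gamma} = \Gamma_0 \oplus \Gamma_1$ as a $\Gamma_0$-bimodule, with the only nontrivial multiplication between the two summands being the $\Gamma_0$-action on $\Gamma_1$, since the product of any two degree-$1$ elements lies in $\Gamma_{\geq 2}$ and is therefore killed. In particular $\Gamma_1$ is a two-sided ideal of $\bar{\Gamma}$ that squares to zero, so $\Gamma_1 \subseteq \operatorname{rad}(\bar{\Gamma})$; together with $\operatorname{rad}(\Gamma_0) \subseteq \operatorname{rad}(\bar{\Gamma})$ and the fact that $\bar{\Gamma}/(\operatorname{rad}(\Gamma_0) \oplus \Gamma_1) \simeq \Gamma_0/\operatorname{rad}(\Gamma_0)$ is semisimple, I get
\[ J := \operatorname{rad}(\bar{\Gamma}) = \operatorname{rad}(\Gamma_0) \oplus \Gamma_1. \]
In particular $\bar{\Gamma}$ has the same complete set of primitive orthogonal idempotents and the same simple modules (up to isomorphism) as $\Gamma_0$, so $\bar{\Gamma}$ is basic with vertex set $Q'_0 = Q_0$.

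Next I would compute $J^2$ using $\Gamma_1 \cdot \Gamma_1 = 0$ in $\bar{\Gamma}$, obtaining
\[ J^2 = \operatorname{rad}(\Gamma_0)^2 \; \oplus \; \bigl( \operatorname{rad}(\Gamma_0) \cdot \Gamma_1 + \Gamma_1 \cdot \operatorname{rad}(\Gamma_0) \bigr), \]
so that
\[ J/J^2 \;\simeq\; \operatorname{rad}(\Gamma_0)/\operatorname{rad}(\Gamma_0)^2 \;\oplus\; \Gamma_1/\operatorname{rad}_{\Gamma_0^e}(\Gamma_1) \]
as $\Gamma_0/\operatorname{rad}(\Gamma_0)$-bimodules. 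The decomposition is compatible with the idempotents $e_i$, so the arrows $i \to j$ of the Gabriel quiver of $\bar{\Gamma}$ are exactly a basis of $e_j (J/J^2) e_i$, which splits into arrows coming from $e_j \operatorname{rad}(\Gamma_0)/\operatorname{rad}(\Gamma_0)^2 e_i$ (giving precisely the arrows of $Q'$, since $\Gamma_0 \simeq \Bbbk Q'/I'$ with $I'$ admissible) and arrows coming from $e_j \bigl( \Gamma_1/\operatorname{rad}_{\Gamma_0^e}(\Gamma_1) \bigr) e_i$.

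Finally, by the choice of $S = \bigcup_{i,j} S_{i,j}$ in \Cref{Con: Quiver for graded alg}, the set $S$ is a basis of $\Gamma_1/\operatorname{rad}_{\Gamma_0^e}(\Gamma_1)$ compatible with this idempotent decomposition, so the new arrows added to $Q'$ to form $Q$ are in bijection with a basis of $J/J^2$ lying in the second summand, matching exactly those arrows of the Gabriel quiver of $\bar{\Gamma}$ not already in $Q'$. Hence $Q$ equals the Gabriel quiver of $\bar{\Gamma}$, as claimed. The only subtle point is the identification $\Gamma_1/(\operatorname{rad}(\Gamma_0)\Gamma_1 + \Gamma_1 \operatorname{rad}(\Gamma_0)) = \Gamma_1/\operatorname{rad}_{\Gamma_0^e}(\Gamma_1)$, which follows because $\Gamma_0^e$ is finite dimensional and $\operatorname{rad}(\Gamma_0^e) = \operatorname{rad}(\Gamma_0) \otimes \Gamma_0^{\mathrm{op}} + \Gamma_0 \otimes \operatorname{rad}(\Gamma_0^{\mathrm{op}})$; I do not expect any real obstacle beyond this bookkeeping.
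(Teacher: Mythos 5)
Your proof is correct, and it is essentially the argument the paper has in mind: the paper gives no separate proof of this proposition (the Construction just asserts that ``it is easy to see'', and the closing remark of that subsection records exactly your key identity $\rad(\bar\Gamma)=\rad(\Gamma_0)\oplus\Gamma_1$ in the finite-dimensional case). Your computation of $J$, $J^2$ and the identification $\rad(\Gamma_0)\Gamma_1+\Gamma_1\rad(\Gamma_0)=\rad_{\Gamma_0^e}(\Gamma_1)$ (valid here since $\Bbbk=\mathbb{C}$, so $\Gamma_0/\rad(\Gamma_0)$ is separable) correctly fills in the details the paper omits.
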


We note that this notion is also compatible with graded Morita equivalence. 

\begin{Rem}
    Let $e \in \Gamma_0$ be a Morita idempotent. Then $e\Gamma e = \bigoplus_{i \geq 0} e \Gamma_i e$ is again a graded algebra, and $\Gamma$ and $e \Gamma e$ are graded Morita equivalent in the sense of \cite{GradedMorita}. In particular, we can choose $e \in \Gamma_0$ so that $e \Gamma_0 e$ is basic, and hence $\Gamma$ is graded Morita equivalent to a quotient of a quiver algebra as in \Cref{Pro: Quivers for graded algebras}.    
\end{Rem}

Let us conclude by pointing out that if $\Gamma$ is finite dimensional, then the discussion reproduces the known finite dimensional situation, since then the Jacobson radical is $\rad(\Gamma) = \rad(\Gamma_0) \oplus \Gamma_{\geq 1}$. 

\subsection{Higher representation infinite algebras}
Let $\Lambda$ be a finite dimensional $\Bbbk$-algebra of finite global dimension $d \geq 1$. We remind the reader of the following functors:
\[ \nu = D\RHom_\Lambda(\--,\Lambda) \colon \Db( \mod \Lambda) \to \Db( \mod \Lambda)  \] 
is the \emph{derived Nakayama functor} on the bounded derived category of finitely generated $\Lambda$-modules, and its quasi-inverse is
\[\nu^{-1} = \RHom_{\Lambda^{\mathrm{op}}}(D(\--), \Lambda) \colon \Db( \mod \Lambda) \to \Db( \mod \Lambda).\]
The \emph{derived higher Auslander-Reiten translation} is the autoequivalence 
\[\nu_d := \nu \circ [-d] \colon \Db(\mod \Lambda) \to \Db( \mod \Lambda).\]

\begin{Def}\cite[Definition 2.7]{HIO} 
The algebra $\Lambda$ is called \emph{$d$-representation infinite} if for any projective module $P$ in $ \mod \Lambda$ and integer $i \geq 0$ we have 
\[ \nu_d^{-i} P \in   \mod \Lambda. \]
\end{Def}

While these algebras are important in Iyama's higher Auslander-Reiten theory, we are interested in them due to their connection to $(d+1)$-Calabi-Yau algebras via the \emph{higher preprojective algebra}.

\begin{Def}\cite[Definition 2.11]{IyamaOppermannStable}
Let $\Lambda$ be of global dimension at most $d$. The $(d+1)$-preprojective algebra of $\Lambda$ is 
\[ \Pi_{d+1}(\Lambda) = \operatorname{T}_\Lambda \Ext^{d}_\Lambda(D(\Lambda), \Lambda). \]
\end{Def}

As we will see soon, if $\Lambda$ is $d$-representation infinite, its $(d+1)$-preprojective algebra is $(d+1)$-Calabi-Yau.
Furthermore, note that $\Pi_{d+1}(\Lambda)$ naturally comes with a grading induced from tensor degrees, and that we recover $\Lambda$ as the degree $0$ part of this grading. The properties of this grading, together with being Calabi-Yau, essentially determine the preprojective algebras of higher representation infinite algebras. 

\begin{Def}\cite[Definition 3.1]{AIR} \label{Def: n-CY GP a}
Let $\Gamma=\bigoplus_{i \geq 0}\Gamma_i$ be a positively graded $\Bbbk$-algebra. We call $\Gamma$ a \emph{bimodule $(d+1)$-Calabi-Yau algebra of Gorenstein parameter $a$} if there exists a bounded graded projective $\Gamma$-bimodule resolution
$P_\bullet$ of $\Gamma$ and an isomorphism of complexes of graded $\Gamma$-bimodules
\[ P_\bullet \simeq \Hom_{\Gamma^{\mathrm{e}}}(P_\bullet, \Gamma^{\mathrm{e}})[d+1](-a).  \]
\end{Def}

Dropping the grading from this definition, we recover the definition of a bimodule $(d+1)$-Calabi-Yau algebra. When introducing different gradings on $\Gamma$ so that the projective bimodule resolution becomes graded, the resulting Gorenstein parameter varies. We are interested precisely in the case $a=1$.

\begin{Theo}\cite[Theorem 4.35]{HIO}\label{Theo: HPG is f.d. GP1}
There is a bijection between $d$-representation infinite algebras $\Lambda$ and graded bimodule $(d+1)$-Calabi-Yau algebras $\Gamma$ of Gorenstein parameter 1 with $\dim_\Bbbk (\Gamma_i) < \infty $ for all $i \in \mathbb{N}$, both sides taken up to isomorphism. The bijection is given by 
\[ \Lambda \mapsto \Pi_{d+1}(A) \quad \mathrm{ and } \quad \Gamma \mapsto \Gamma_0.  \]
\end{Theo}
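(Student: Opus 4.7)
The plan is to prove both directions of the bijection and verify that the constructions are mutually inverse.

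\smallskip

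\emph{Forward direction.} Suppose $\Lambda$ is $d$-representation infinite, and set $\Pi := \Pi_{d+1}(\Lambda)$. The first step is to establish local finiteness of the tensor grading. I would identify $\Pi_i \simeq \nu_d^{-i}\Lambda$ by induction on $i$, using the identity
\[ \Ext^d_\Lambda(D\Lambda, \Lambda) \otimes_\Lambda^{\mathbb{L}} (-) \simeq \nu_d^{-1}(-) \]
in $\Db(\mod \Lambda)$. The $d$-representation infinite hypothesis ensures that each $\nu_d^{-i}\Lambda$ lies in $\mod \Lambda$, so the derived tensor product collapses to the ordinary one, and finite generation of $\nu_d^{-i}\Lambda$ as a $\Lambda$-module yields $\dim_{\Bbbk}\Pi_i < \infty$. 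The next step is to construct a graded bimodule resolution of $\Pi$ of length $d+1$ by inducing from the length-$d$ projective bimodule resolution of $\Lambda$ via tensor algebra relations, where the final term picks up a grading shift $(-1)$ coming from placing $\Ext^d_\Lambda(D\Lambda, \Lambda)$ in degree $1$; this shift is precisely the Gorenstein parameter. CY self-duality is then verified by pairing with the dual of the bimodule resolution of $\Lambda$ and applying the defining isomorphism of $\Ext^d_\Lambda(D\Lambda, \Lambda)$.

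\smallskip

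\emph{Backward direction.} Let $\Gamma$ be a graded bimodule $(d+1)$-CY algebra of Gorenstein parameter $1$, locally finite dimensional, and set $\Lambda := \Gamma_0$. Apply $\Lambda \otimes_\Gamma^{\mathbb{L}} (-) \otimes_\Gamma^{\mathbb{L}} \Lambda$ to the graded CY resolution of $\Gamma$, and restrict to graded degree $0$. The rightmost term of the resolution sits in graded degree $\geq 1$ by virtue of the shift $(-1)$ defining the Gorenstein parameter, so it vanishes after restriction, and the remaining complex is a projective $\Lambda$-bimodule resolution of $\Lambda$ of length at most $d$. Hence $\Lambda$ has finite global dimension at most $d$. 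Reading off the graded self-duality in degree $1$ identifies $\Gamma_1 \simeq \Ext^d_\Lambda(D\Lambda, \Lambda)$ as $\Lambda$-bimodules. Generation of $\Gamma$ in degrees $0$ and $1$, combined with an inductive argument paralleling the forward direction, then yields $\Gamma_i \simeq \Gamma_1^{\otimes_\Lambda i} \simeq \nu_d^{-i}\Lambda$. This simultaneously shows that $\Lambda$ is $d$-representation infinite and that $\Pi_{d+1}(\Lambda) \simeq \Gamma$ as graded algebras, closing the bijection.

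\smallskip

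\emph{Main obstacle.} The technical crux of both directions is the collapse
\[ \Ext^d_\Lambda(D\Lambda, \Lambda)^{\otimes_\Lambda i} \simeq \nu_d^{-i}\Lambda \]
in the derived category, that is, the vanishing of higher Tor groups between successive Nakayama iterates. In the forward direction this is ensured by the $d$-representation infinite hypothesis; in the backward direction, the graded bimodule duality of Gorenstein parameter $1$ must be manipulated carefully to deduce the same vanishing. Bridging the derived picture (where $\nu_d$ naturally lives) with the abelian picture (where $\Pi$ is merely a graded algebra) is the principal technical hurdle, and everything else reduces to bookkeeping of the bimodule resolutions.
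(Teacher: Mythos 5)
The paper does not actually prove this statement: it is imported verbatim from Herschend--Iyama--Oppermann \cite[Theorem~4.35]{HIO}, whose proof (via Keller's Calabi--Yau completions and Amiot--Iyama--Reiten's result that a graded bimodule $(d+1)$-CY algebra of Gorenstein parameter $1$ is the $(d+1)$-preprojective algebra of its degree-$0$ part) follows essentially the strategy you outline --- identifying $\Pi_i \simeq \nu_d^{-i}\Lambda$ and using local finiteness for the forward direction, and truncating the graded self-dual bimodule resolution to degree $0$ for the converse. Your sketch is therefore sound in outline; the one step you should not wave at is why the terms of the minimal graded bimodule resolution of $\Gamma$ are generated only in degrees $0$ and $1$ (so that their degree-$0$ parts are projective $\Lambda$-bimodules and the top term, generated purely in degree $1$ by the $(-1)$-shifted self-duality, vanishes after truncation), which is exactly where generation in degrees $0,1$ and the Gorenstein-parameter-$1$ condition must both enter.
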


Hence we take the perspective that admitting a higher preprojective grading is a property of a Calabi-Yau algebra. The question whether a given Calabi-Yau algebra has this property was raised in \cite{Thibault}. We therefore need a source of Calabi-Yau algebras, which leads to our next subsection.

\subsection{Skew-group algebras}
Let $G$ be a finite group acting from the left on some $\Bbbk$-algebra $R$ via automorphisms. Then the \emph{skew-group algebra} of $R$ by $G$ is the vector space 
\[ R \ast G = R \otimes \Bbbk G, \]
with multiplication induced from 
\[ (r \otimes g) (s \otimes h) = r g(s) \otimes gh. \]

In this article, we consider the case where $G \leq \SL_{d+1}(\Bbbk)$ is finite. Then $G$ acts naturally on $\Bbbk^{d+1}$, and hence on the polynomial ring $R = \Bbbk [x_1, \ldots, x_{d+1}]$. We fix this notation throughout. 

\begin{Pro}\cite[Theorem 3.2]{BSW}
    Let $G \leq \SL_{d+1}(\Bbbk)$ be finite, acting on the polynomial ring $R = \Bbbk [x_1, \ldots, x_{d+1}]$. Then $R \ast G$ is $(d+1)$-Calabi-Yau.  
\end{Pro}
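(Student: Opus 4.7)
The plan is to establish the standard bimodule $(d+1)$-Calabi-Yau structure on $R$ itself and then transport it to $R \ast G$, using the hypothesis $G \leq \SL_{d+1}(\Bbbk)$ at precisely the step where orientation matters.

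First, I would write down the Koszul bimodule resolution
\[ 0 \to R \otimes \wedge^{d+1} V \otimes R \to \cdots \to R \otimes V \otimes R \to R \otimes R \to R \to 0, \]
where $V = \Bbbk x_1 \oplus \cdots \oplus \Bbbk x_{d+1}$, with the standard Koszul differentials. Applying $\Hom_{R^e}(-, R^e)$ term by term and using the natural pairings $\wedge^k V \otimes \wedge^{d+1-k} V \to \wedge^{d+1} V$ produces an isomorphism of complexes exhibiting $R$ as bimodule $(d+1)$-Calabi-Yau, with a ``twist'' by the one-dimensional space $\wedge^{d+1} V$.

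Next, I would pass to $R \ast G$. The resolution above is $G$-equivariant since $G$ acts on $V$, and tensoring up through $\Bbbk G$ produces a bounded projective bimodule resolution of $R \ast G$ over $(R \ast G)^e$; this uses only that $\Bbbk G$ is separable (equivalently, $|G|$ is invertible in $\Bbbk$), and in particular shows $R \ast G$ has finite Hochschild dimension at most $d+1$. To promote the self-duality from $R$ to $R \ast G$, I would verify that the comparison isomorphism of complexes above is $G$-equivariant, which is possible precisely when $\wedge^{d+1} V$ is the trivial one-dimensional $G$-representation. Since $g$ acts on $\wedge^{d+1} V$ by the scalar $\det(g)$, this triviality is exactly the hypothesis $G \leq \SL_{d+1}(\Bbbk)$.

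The main obstacle is bookkeeping rather than insight: one must identify the bimodule $(R \ast G) \otimes_{\Bbbk G} W \otimes_{\Bbbk G} (R \ast G)$ for a $\Bbbk G$-representation $W$ with the correct twist appearing in $\RHom_{(R \ast G)^e}(R \ast G, (R \ast G)^e)$, and then check that the trivialisation of $\wedge^{d+1} V$ forced by the $\SL$ condition intertwines the Koszul differentials in the self-duality isomorphism. Everything else follows from the Calabi-Yau property of $R$ and the separability of $\Bbbk G$, so no additional structural difficulty arises.
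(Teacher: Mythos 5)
The paper gives no proof of this proposition; it is imported verbatim from the cited reference \cite{BSW}, so there is nothing internal to compare against. Your argument --- exhibiting $R$ as bimodule $(d+1)$-Calabi-Yau via the self-duality of the Koszul bimodule resolution up to a twist by $\wedge^{d+1}V$, inducing that resolution up to $(R \ast G)^{\mathrm{e}}$ using the separability of $\Bbbk G$, and observing that the determinant twist on $\wedge^{d+1}V$ trivialises exactly when $G \leq \SL_{d+1}(\Bbbk)$ --- is precisely the standard proof underlying that citation, and it is correct; the steps you defer as bookkeeping (identifying the induced bimodules and checking compatibility of the duality with the induced differentials) are routine in characteristic $0$.
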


The following description of the quiver and superpotential for $R \ast G$ is based on the fact that $R$, and hence $R \ast G$, is Koszul with respect to the standard polynomial grading. We also need the notion of a McKay quiver. 

\begin{Def}
    Let $G \leq \SL_{d+1}(\Bbbk)$ be finite. Denote the natural $(d+1)$-dimensional representation of $G$ by $\rho$. The McKay quiver $Q = Q_G$ has as vertices the irreducible representations of $G$, i.e.\ $Q_0 = \Irr(G)$, and for two irreducible representations $\chi_i, \chi_j \in Q_0$, the arrows from $\chi_i$ to $\chi_j$ are a basis of $\Hom_{\Bbbk G}(\chi_i , \chi_j \otimes \rho)$. 
\end{Def}

For the details of the description of the superpotential $\omega$ in the following theorem, we refer the reader to \cite{BSW}. 

\begin{Theo}\cite[Lemma 3.1, Theorem 3.2]{BSW}
    Let $G \leq \SL_{d+1}(\Bbbk)$ be finite, acting on the polynomial ring $R = \Bbbk [x_1, \ldots, x_{d+1}]$. Then $R \ast G$ is Morita equivalent to a basic algebra $\Bbbk Q_G/I$. The quiver $Q_G$ is the McKay quiver of $G$, and the ideal of relations $I \subseteq (\Bbbk(Q_G)_1)^2$ is induced from the commutativity relations in $R$. More precisely, $I = \langle \partial_{d-1} \omega \rangle$ is generated by the $(d-1)$-th derivatives of a superpotential $\omega$ on $Q_G$, and $\omega$ is a linear combination of cycles of length $d+1$ in $Q_G.$
\end{Theo}

Thus, we have a large class of Calabi-Yau algebras available, and we ask when such a Calabi-Yau algebra $R \ast G$ arises as a higher preprojective algebra. More precisely, we are interested in constructing a locally finite grading of Gorenstein parameter $1$ on $R \ast G$. There is to date no general characterization when $R \ast G$ admits such a grading. However, there are large classes of examples known, such as those coming from certain cyclic or metacyclic groups \cite{AIR, Giovannini}, and classes of counterexamples coming from direct product decompositions \cite{Thibault}, as well as a classification of those abelian groups $G \leq \SL_{d+1}(\Bbbk)$ for which $R \ast G$ admits such a grading \cite{DramburgGasanova2}.   

\subsection{Higher preprojective cuts}
We are interested in the existence of higher preprojective gradings on skew-group algebras $R \ast G$ for a finite group $G \leq \SL_{d+1}(\Bbbk)$ acting on the polynomial ring $R$ in $d+1$ variables. The precise relationship between the quiver of $R \ast G$ and such gradings is subtle, so we summarise the relevant results here. We begin by noting how to transfer higher preprojective gradings along Morita equivalences. 

\begin{Rem}
    Let $e \in \Gamma$ be a Morita idempotent. If $\Gamma$ is equipped with a higher preprojective grading so that $e$ is homogeneous of degree $0$, then the induced grading on $ e \Gamma e$ is also higher preprojective. Conversely, if $e \Gamma e$ is equipped with a higher preprojective grading, then $e \Gamma$ is a gradeable $e \Gamma e$-module and $\Gamma \simeq \End_{e \Gamma e}(e \Gamma)$ inherits a higher preprojective grading. In particular, this means that an algebra $\Gamma$ admits a higher preprojective grading if and only if the Morita equivalent quotient of a path algebra $\Bbbk Q/I$ admits a higher preprojective grading. 
\end{Rem}

Hence, in order to decide whether $R \ast G$ admits a higher preprojective grading, it suffices to check this for the Morita equivalent algebra $\Bbbk Q_G/I$, where $Q_G$ is the McKay quiver of $G$. When constructing such a grading on $\Bbbk Q_G/I$, it is tempting to grade $Q_G$ directly. Such gradings on $Q_G$ are called \emph{cuts}.

\begin{Def}
    A higher preprojective grading on a basic Calabi-Yau algebra $\Bbbk Q/I$ is called a \emph{cut} if all arrows of $Q$ are homogeneous of degree $0$ or $1$, and all vertices in $Q$ are homogeneous of degree $0$. We refer to the induced grading of $Q$ as the cut on $Q$.
\end{Def}

Let us now discuss the relevance of cuts. 

\begin{Rem}
    Suppose that the Calabi-Yau algebra $\Bbbk Q/I$ is graded with respect to path-length, and Koszul with respect to this grading. Further, suppose there exists a higher preprojective grading on $\Bbbk Q/I$ for which the resulting higher representation infinite algebra $\Lambda$ has acyclic Gabriel quiver. It was proven in \cite{DramburgSandoy} that there exists an automorphism of $\Bbbk Q/I$ mapping the higher preprojective grading to a higher preprojective cut. 
    In light of the recent discovery by Tomonaga \cite[Example 6.5]{TomonagaSiltingMutations} of a $2$-representation infinite algebra which has a cycle in its Gabriel quiver, we note that it is therefore not clear whether it suffices to consider cuts to exhaust all possible higher preprojective gradings. Note that the example in \cite{TomonagaSiltingMutations} is not Koszul, and we believe that the Koszul assumption on $\Bbbk Q/I$ forces any possible $A$ to have acyclic Gabriel quiver. 
\end{Rem}

From now on, we only consider higher preprojective cuts. It follows from our results that all possible cuts on the algebras we consider indeed have acyclic degree $0$, so a higher preprojective grading not equivalent to a cut would necessarily have to be constructed in a different way. 

We will construct cuts by iterated skewing, but we also need a criterion to exclude those algebras which do not admit a cut. We will use the following easy observation to conclude that no higher preprojective cut exists for the skew-group algebras we consider in this article. 

\begin{Pro}\label{Pro: loops in support}
    Let $\Bbbk Q/I$ be a $(d+1)$-Calabi-Yau algebra for which $I = \langle \partial_{d-1} \omega \rangle $ is generated by the $(d-1)$-th derivatives of a superpotential $\omega$. If $Q$ contains a loop $c$ such that some power $c^i$ for $i \geq 2$ is a summand in $\omega$, then $\Bbbk Q/I$ does not admit a higher preprojective cut.
\end{Pro}

\begin{proof}
    By \cite[Lemma 4.9]{Thibault}, a higher preprojective cut on $\Bbbk Q/I$ extends to a grading on $\Bbbk Q$, and by \cite[Proposition 3.14]{DramburgGasanova} the (super)potential $\omega$ is homogeneous of degree $1$. Let $\Lambda$ be the $n$-representation infinite algebra in degree $0$ defined by the cut. Then the Gabriel quiver of $\Lambda$ is a subquiver of $Q$. By the No-Loop theorem \cite{NoLoops}, the quiver of $\Lambda$ can not contain $c$. Thus, $c$ has to be in degree at least $1$. But then the summand $c^i$ of $\omega$ has degree at least $2$, which contradicts the fact that $\omega$ has degree $1$. 
\end{proof}

\subsection{Skewed quivers and skewed cuts}
In this section, we summarise how to compute the quiver $Q \ast G$ of a skew-group algebra $(\Bbbk Q/I) \ast G$ from the quiver $Q$. In \cite{ReitenRiedtmann}, Reiten and Riedtmann described the quiver $Q \ast G$ in case $G$ is cyclic. For our applications this case suffices, but we give a more general summary for an arbitrary finite group $G$. This is a result of Demonet \cite{Demonet}. 
We begin with group actions on path algebras. 

\begin{Conv}\label{Conv: Action on quiver}
    Let $G$ act on $\Bbbk Q$ such that $G$ permutes the primitive idempotents $\{ e_i \mid i  \in Q_0 \}$ and such that $G$ fixes the space of arrows $\Bbbk(Q_1)$. For each idempotent, we denote its stabiliser by $G_i = \operatorname{Stab}_G(e_i)$, and the intersection of two stabilisers as $G_{i,j} = \Stab_G(e_i) \cap \Stab_G(e_j)$. Since $G$ acts by algebra automorphisms, it follows that each $e_i (\Bbbk Q_1) e_j $ can be seen as a $\Bbbk G_{i,j}$-module, which we denote by $\rho_{i,j}$. 
    Denote by $S$ a transversal for the action of $G$ on $\{e_i \mid i \in Q_0\} $, and denote by $s_i \in S$ the representative of the $G$-orbit of $e_i$. Next, for each $(s_i, s_j) \in S \times S$, consider the action of $G$ on the product of orbits $(G \cdot e_i) \times (G \cdot e_j)$, and fix a transversal $S_{i,j}$ for this action.

    Lastly, for each $e_j \in G \cdot s_i$, fix an element $g_{s_i \to e_j}$ such that $g_{s_i \to e_j}(s_i) = e_j$. In the following, the notation $(-)^g$ denotes conjugation by the element $g \in G$. 
\end{Conv}

\begin{Def}
\label{Def: skewed}
    Let $G$ act on $\Bbbk Q$ as in \Cref{Conv: Action on quiver}, and define the quiver $Q \ast G$ as follows. The vertices are 
    \[ (Q \ast G)_0 = \bigcup_{e_i \in S} \{e_i \} \times \Irr(\Stab_G(e_i)).  \]
    For vertices $(e_i, \varphi), (e_j, \psi) \in (Q \ast G)_0$, the arrows from $(e_i, \varphi)$ to $(e_j, \psi)$ are a basis of 
    \[ \bigoplus_{(u_1, u_2) \in S_{i,j}} \Hom_{\Bbbk G_{u_1,u_2}} \left( \varphi^{g_{s_i \to u_1}}_{|G_{u_1,u_2}}, \psi^{g_{s_j \to u_2}}_{|G_{u_1,u_2}} \otimes_k \rho_{u_1, u_2} \right) .  \]
\end{Def}

\begin{Theo}\cite[Theorem 1]{Demonet} \label{Theo: Demonet}
     With the setup from \Cref{Conv: Action on quiver}, the algebras $(\Bbbk Q) \ast G$ and $\Bbbk(Q \ast G)$ are Morita equivalent. 
\end{Theo}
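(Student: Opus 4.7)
The plan is to prove the theorem by exhibiting a complete set of primitive orthogonal idempotents in $(\Bbbk Q) \ast G$ indexed precisely by $(Q \ast G)_0$ (up to isomorphism of projectives), and then computing the Hom-spaces between the corresponding indecomposable projectives to recover the arrow count from the definition.

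First I would construct the idempotents. For each orbit representative $s_i \in S$, the stabiliser $G_i$ fixes $s_i$, so $s_i \otimes \Bbbk G_i$ sits inside $(s_i \otimes 1)\bigl((\Bbbk Q) \ast G \bigr)(s_i \otimes 1)$ as the degree-$0$ component (with respect to the path-length grading on $\Bbbk Q$). Choosing, for each $\varphi \in \Irr(G_i)$, a primitive idempotent $\epsilon_\varphi \in \Bbbk G_i$ affording $\varphi$, the element $s_i \otimes \epsilon_\varphi$ is a primitive idempotent of $(\Bbbk Q) \ast G$, since the radical of its endomorphism ring is given by the higher-degree paths together with $\rad(\epsilon_\varphi \Bbbk G_i \epsilon_\varphi) = 0$. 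Idempotents associated to non-representative vertices $e_i \in G \cdot s_i$ yield projectives isomorphic to one at $s_i$ via conjugation by $1 \otimes g_{s_i \to e_i}$, and distinct pairs $(s_i, \varphi)$ give non-isomorphic projectives because their simple tops are distinct (distinguished either by the orbit or by the irreducible representation of $G_i$). Thus, these idempotents enumerate $(Q\ast G)_0$ exactly.

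Next I would compute the Hom-spaces. For two such idempotents, the space of morphisms between the corresponding projectives is
\[ (s_j \otimes \epsilon_\psi)\bigl((\Bbbk Q) \ast G \bigr)(s_i \otimes \epsilon_\varphi) = \bigoplus_{g \in G} \epsilon_\psi \cdot \bigl(s_j (\Bbbk Q) g(s_i)\bigr) \otimes g \cdot \epsilon_\varphi. \]
Splitting the sum over $G$ into double cosets $G_j \backslash G / G_i$ and applying a Mackey-type analysis, each double-coset summand collapses to
\[ \Hom_{\Bbbk(G_j \cap g G_i g^{-1})}\bigl(\varphi^{g}_{|}, \psi_{|} \otimes s_j (\Bbbk Q) g(s_i)\bigr), \]
where the superscript denotes conjugation of representations and the subscript denotes restriction. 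A set of double coset representatives for $G_j \backslash G / G_i$ is in canonical bijection with the transversal $S_{i,j}$ of \Cref{Conv: Action on quiver} via $g \mapsto (g(s_i), s_j)$, and the conjugation elements $g_{s_i \to u_1}$, $g_{s_j \to u_2}$ translate the Hom-formula from stabilisers of $s_i, s_j$ to the intersection $G_{u_1, u_2}$ in the statement.

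Finally I would assemble everything via the tensor algebra. Because $\Bbbk Q = \operatorname{T}_{\Bbbk Q_0}(\Bbbk Q_1)$ and $G$ respects this decomposition, the skew-group construction commutes with the tensor algebra formation, giving $(\Bbbk Q) \ast G \simeq \operatorname{T}_{\Bbbk Q_0 \ast G}(\Bbbk Q_1 \ast G)$. The basic algebra Morita equivalent to this tensor algebra is itself a path algebra (no relations beyond those in the semisimple part), whose vertices come from a Wedderburn decomposition of $\Bbbk Q_0 \ast G$ and whose arrows come from a bimodule decomposition of $\Bbbk Q_1 \ast G$. Specialising the Hom-computation above to the degree-$1$ part $\Bbbk Q_1$ (whose $\Bbbk G_{u_1, u_2}$-module structure on $e_{u_1} \Bbbk Q_1 e_{u_2}$ is precisely $\rho_{u_1, u_2}$) produces exactly the arrow formula in the definition of $Q \ast G$. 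The main obstacle is the bookkeeping of the Mackey decomposition: one must verify that the reparameterisation via double cosets $\leftrightarrow S_{i,j}$, together with the conjugations $g_{s \to \cdot}$, matches the formula of the theorem on the nose, and in particular that the choices of transversals and conjugation elements do not affect the resulting quiver up to isomorphism.
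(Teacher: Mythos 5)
The paper does not actually prove this statement---it is quoted from Demonet \cite{Demonet}---and your outline is essentially the argument given there (and by Reiten--Riedtmann in the cyclic case): a complete set of orthogonal idempotents $s_i\otimes\epsilon_\varphi$, a Mackey decomposition of the corner spaces over double cosets $G_j\backslash G/G_i$ matched with the transversals $S_{i,j}$, and the identification of $(\Bbbk Q)\ast G\simeq \operatorname{T}_{\Bbbk Q_0\ast G}(\Bbbk Q_1\ast G)$ as a tensor algebra over the semisimple ring $\Bbbk Q_0\ast G$. One small caution: when $Q$ has oriented cycles the positive-degree part of $(s_i\otimes\epsilon_\varphi)\bigl((\Bbbk Q)\ast G\bigr)(s_i\otimes\epsilon_\varphi)$ need not lie in the Jacobson radical (already $\Bbbk[x]$ has zero radical), so primitivity should be deduced from the nonnegative grading having a one-dimensional degree-zero corner rather than from a radical computation---your final tensor-algebra step over the semisimple degree-zero part in fact already circumvents this issue.
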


We note one immediate consequence, which will be useful when combined with \Cref{Pro: loops in support}. 

\begin{Cor}\label{Cor: Full connected orbit implies loop}
    Let $G$ act on $\Bbbk Q$ as above. If there exists a vertex $e \in Q_0$ such that the orbit $G \cdot e$ has size $|G|$ and such that there exists an arrow $e \to g(e)$ for some $g \in G$, then the quiver $Q \ast G$ has a loop. 
\end{Cor}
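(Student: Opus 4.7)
The plan is to apply Demonet's formula in \Cref{Theo: Demonet} at the single vertex of $Q\ast G$ arising from the orbit $G\cdot e$, and exhibit a loop there.

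First I would observe that the assumption $|G\cdot e|=|G|$ forces the action of $G$ on $G\cdot e$ to be free, so every stabiliser $G_{u_1,u_2}$ with $u_1,u_2\in G\cdot e$ is trivial. In particular, if $s$ is the chosen transversal representative inside $G\cdot e$, then $G_s=\{1\}$ has a unique irreducible representation, the trivial one $\mathbf{1}$, and hence the orbit of $e$ contributes exactly one vertex $v=(s,\mathbf{1})$ to $(Q\ast G)_0$. Any loop in $Q\ast G$ at $v$ will be the desired loop.

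Then I would read off the number of loops at $v$ from Demonet's formula. Since every stabiliser appearing in the sum is trivial, each $(u_1,u_2)$-summand collapses to
\[
\Hom_{\Bbbk}(\mathbf{1},\mathbf{1}\otimes\rho_{u_1,u_2})\simeq e_{u_1}(\Bbbk Q_1)e_{u_2},
\]
so the loops at $v$ are counted by summing, over a transversal $S_{s,s}$ of the diagonal $G$-action on $(G\cdot s)\times(G\cdot s)$, the number of arrows in $Q$ between the chosen representatives. To produce one, I would use the hypothesised arrow $a:e\to g(e)$: the pair $(e,g(e))$ lies in some $G$-orbit, represented by $(u_1,u_2)=k\cdot(e,g(e))\in S_{s,s}$ for some $k\in G$. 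Since $G$ acts on $\Bbbk Q$ by algebra automorphisms preserving $\Bbbk Q_1$ by \Cref{Conv: Action on quiver}, the element $k(a)$ is a nonzero element of $e_{u_1}(\Bbbk Q_1)e_{u_2}$, so this summand contributes at least one loop at $v$. I do not anticipate a genuine obstacle here; the substantive point is recognising that the freeness of the $G$-action on $G\cdot e$ trivialises all the Hom spaces appearing in Demonet's formula, after which the statement is bookkeeping.
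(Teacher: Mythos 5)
Your proposal is correct and follows the same route as the paper: identify the single vertex $(e,\mathbf{1})$ coming from the free orbit, and observe that the arrow $e \to g(e)$ makes the relevant $\rho_{u_1,u_2}$, and hence the Hom space counting loops in Demonet's formula, nonzero. The extra bookkeeping you supply (translating the arrow by $k$ into the chosen transversal $S_{i,i}$) is exactly what the paper leaves implicit.
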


\begin{proof}
    Without loss of generality, write $e_i = e$. Note that in this situation, the orbit $G \cdot e$ gives rise to a single vertex $(e_i, \mathbf{1})$ since the stabilizer $\operatorname{Stab}_G(e_i)$ is trivial. Furthermore, the arrow $e \to g(e)$ gives rise to some non-zero $\rho_{u_1, u_2}$ for some $(u_1, u_2) \in S_{i,i}$, and hence the space defining the loops at the vertex $(e_i, \mathbf{1})$ is non-zero. 
\end{proof}

Next, we note that the above discussion can be extended to quotients of path algebras by $G$-stable ideals. 

\begin{Rem}
    Let the finite group $G$ act on the algebra $\Bbbk Q$ as in \Cref{Conv: Action on quiver}. Let $I \subseteq (\Bbbk Q_1)^2$ be an ideal such that $G \cdot I = I$. Then $G$ also acts on $\Bbbk Q/I$, and the algebra $(\Bbbk Q/I) \ast G$ is Morita equivalent to a quotient $\Bbbk( Q \ast G)/I'$ for an ideal $I' \subseteq (\Bbbk( Q \ast G)_1)^2$. 
\end{Rem}

Since we want to transport higher preprojective cuts along skew-group constructions, we need the following theorem of Le Meur.

\begin{Theo}\cite[Proposition 6.2.1]{LeMeur}
    Let $\Lambda$ be $d$-representation infinite, and let the finite group $G \leq \Aut(\Lambda)$ act on $\Lambda$. This induces an action of $G$ on $\Pi_{d+1}(\Lambda)$, and $\Lambda \ast G$ is $d$-representation infinite with 
    \[ \Pi_{d+1}(\Lambda \ast G ) \simeq \Pi_{d+1}(\Lambda) \ast G.  \]
\end{Theo}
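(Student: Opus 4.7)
The plan is to leverage the bijection in \Cref{Theo: HPG is f.d. GP1} between $d$-representation infinite algebras and graded bimodule $(d+1)$-Calabi-Yau algebras of Gorenstein parameter $1$. The core claim is that $\Gamma := \Pi_{d+1}(\Lambda) \ast G$, equipped with the grading inherited from $\Pi_{d+1}(\Lambda)$ by placing $\Bbbk G$ in degree $0$, is itself a bimodule $(d+1)$-Calabi-Yau algebra of Gorenstein parameter $1$ whose degree-zero part is $\Lambda \ast G$. Once this is established, the bijection forces $\Lambda \ast G$ to be $d$-representation infinite and $\Pi_{d+1}(\Lambda \ast G) \simeq \Gamma$ as graded algebras, which is exactly the content of the theorem.

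To set up the action, note that since $G$ acts on $\Lambda$ by algebra automorphisms, it acts functorially on $D(\Lambda)$ and on the $\Lambda$-bimodule $\Ext^d_\Lambda(D\Lambda, \Lambda)$; by the universal property of the tensor algebra this yields a graded action on $\Pi_{d+1}(\Lambda)$ by algebra automorphisms. This is the $G$-action used to form $\Gamma$. The induced grading on $\Gamma$ is locally finite, since $\dim \Gamma_i = |G| \cdot \dim \Pi_{d+1}(\Lambda)_i$, and the degree-zero part is plainly $\Lambda \ast G$.

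The technical heart of the proof is the Calabi-Yau calculation. Starting from a bounded graded projective $\Pi_{d+1}(\Lambda)$-bimodule resolution $P_\bullet \to \Pi_{d+1}(\Lambda)$ together with the graded isomorphism
\[ P_\bullet \simeq \Hom_{\Pi_{d+1}(\Lambda)^e}(P_\bullet, \Pi_{d+1}(\Lambda)^e)[d+1](-1) \]
witnessing the CY structure of Gorenstein parameter $1$, I would apply the base change functor $\Gamma \otimes_{\Pi_{d+1}(\Lambda)} (-) \otimes_{\Pi_{d+1}(\Lambda)} \Gamma$ to obtain a bounded graded projective $\Gamma$-bimodule resolution of $\Gamma$. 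Using that $\Bbbk G$ is separable (since $|G|$ is invertible in $\Bbbk = \mathbb{C}$), the duality $\Hom_{(-)^e}(-, (-)^e)$ commutes with skewing up to the appropriate $G$-twist, and this promotes the CY isomorphism to a graded isomorphism of $\Gamma$-bimodule complexes. This exhibits $\Gamma$ as a bimodule $(d+1)$-Calabi-Yau algebra of Gorenstein parameter $1$, at which point \Cref{Theo: HPG is f.d. GP1} closes the argument.

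The hard part is the bookkeeping of $G$-twists that arise when skewing the duality $\Hom_{\Pi_{d+1}(\Lambda)^e}(-, \Pi_{d+1}(\Lambda)^e)$. One needs the $G$-action on $\Pi_{d+1}(\Lambda)$ to preserve the CY structure, meaning that the defining isomorphism in \Cref{Def: n-CY GP a} can be chosen $G$-equivariantly. This equivariance is plausible since the CY datum on $\Pi_{d+1}(\Lambda)$ is canonically constructed from the bimodule $\Ext^d_\Lambda(D\Lambda, \Lambda)$ on which $G$ acts functorially, but making this precise requires a careful verification that the pairings involved in the CY property are respected by the $G$-action. Once that is granted, averaging over $G$ yields the desired descent of the CY isomorphism from $\Pi_{d+1}(\Lambda)$-bimodules to $\Gamma$-bimodules, and the grading is preserved throughout because $\Bbbk G$ sits in degree $0$.
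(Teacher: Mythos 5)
This statement is imported from Le Meur's work (\cite[Proposition 6.2.1]{LeMeur}); the paper gives no proof of its own, so your proposal can only be judged on its internal merits. Your overall strategy --- reduce via the bijection of \Cref{Theo: HPG is f.d. GP1} to showing that $\Pi_{d+1}(\Lambda)\ast G$, graded with $\Bbbk G$ in degree $0$, is bimodule $(d+1)$-Calabi-Yau of Gorenstein parameter $1$ with degree-zero part $\Lambda\ast G$ --- is the standard route and is essentially the one taken in the literature on skew-group algebras of Calabi-Yau algebras. The framing is correct and the reduction to the graded CY property is the right move.

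However, there are two genuine gaps at the technical heart. First, the base change $\Gamma\otimes_{\Pi_{d+1}(\Lambda)}(-)\otimes_{\Pi_{d+1}(\Lambda)}\Gamma$ applied to $P_\bullet\to\Pi_{d+1}(\Lambda)$ does \emph{not} produce a resolution of $\Gamma$: it resolves $\Gamma\otimes_{\Pi_{d+1}(\Lambda)}\Gamma$, which is isomorphic to $\Gamma\otimes\Bbbk G$ as a $\Gamma$-bimodule and only contains $\Gamma$ as a direct summand (split by the separability idempotent $\tfrac{1}{|G|}\sum_g g\otimes g^{-1}$). The cleaner device is to first equip $P_\bullet$ with a $G$-action compatible with the bimodule structure --- which itself needs an argument, e.g.\ averaging using that $\Bbbk G$ is semisimple in characteristic $0$ --- and then apply the exact functor $(-)\ast G$ from $G$-equivariant $\Pi_{d+1}(\Lambda)$-bimodules to $\Gamma$-bimodules, which sends $\Pi_{d+1}(\Lambda)$ to $\Gamma$ and equivariant projectives to projectives. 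Second, you explicitly defer the $G$-equivariance of the self-duality isomorphism $P_\bullet\simeq\Hom_{\Pi_{d+1}(\Lambda)^{\mathrm e}}(P_\bullet,\Pi_{d+1}(\Lambda)^{\mathrm e})[d+1](-1)$ as ``plausible.'' This is precisely the nontrivial content of the cited result, and without it the descent of the CY isomorphism to $\Gamma$-bimodules does not go through; asserting it is not proving it. As written, the proposal is a correct outline with the two load-bearing steps left open, so it does not constitute a complete proof.
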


We will often use the following corollary. 

\begin{Cor}\label{Cor: Skewing by graded auts}
    Let $G \leq \operatorname{GrAut(\Pi_{d+1}(\Lambda))}$ be a finite subgroup of graded automorphisms. Then $\Pi_{d+1}(\Lambda) \ast G$ is a graded algebra, and the grading is again higher preprojective.  
\end{Cor}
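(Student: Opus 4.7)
The plan is to split the claim into two parts and combine them via the preceding theorem of Le Meur. First, exhibit the grading on $\Pi_{d+1}(\Lambda) \ast G$ directly. Write $\Pi = \Pi_{d+1}(\Lambda) = \bigoplus_{i \geq 0} \Pi_i$. Since every element of $G$ is a graded automorphism, each $\Pi_i$ is $G$-stable, and setting $(\Pi \ast G)_i := \Pi_i \otimes \Bbbk G$ gives a vector space decomposition. It is compatible with multiplication because
\[
(r \otimes g)(s \otimes h) = r\, g(s) \otimes gh \in \Pi_{\deg(r) + \deg(s)} \otimes \Bbbk G
\]
whenever $r, s$ are homogeneous, using that $g(s) \in \Pi_{\deg(s)}$. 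The degree-zero component is $\Lambda \ast G$, where $G$ acts on $\Lambda = \Pi_0$ by restriction; this restriction is well defined precisely because $G$ preserves the grading.

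Second, I would invoke Le Meur's theorem (cited just above the statement), which asserts that $\Lambda \ast G$ is $d$-representation infinite and yields an algebra isomorphism
\[
\Pi_{d+1}(\Lambda \ast G) \simeq \Pi_{d+1}(\Lambda) \ast G.
\]
The left-hand side carries its canonical higher preprojective grading from the tensor algebra construction, so it suffices to identify this with the grading from the first step. By \Cref{Theo: HPG is f.d. GP1}, this identification is equivalent to checking that the grading from the first step is locally finite, bimodule $(d+1)$-Calabi-Yau, and of Gorenstein parameter $1$.

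The step that requires the most care is the match of gradings; I would carry it out on generators. Both algebras are generated in degrees $0$ and $1$ over $\Lambda \ast G$. In degree $0$ they are both $\Lambda \ast G$. In degree $1$, the left-hand side has $\Ext^d_{\Lambda \ast G}(D(\Lambda \ast G), \Lambda \ast G)$, while the right-hand side has $\Ext^d_\Lambda(D\Lambda, \Lambda) \otimes \Bbbk G$ as a $(\Lambda \ast G)$-bimodule. The natural isomorphism between these, which is what underlies Le Meur's proof via the induction/restriction adjunction for skew-group algebras, identifies the two sets of generators. Hence the algebra isomorphism can be chosen to respect the gradings.

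Alternatively, one can bypass the generator comparison by verifying the criterion of \Cref{Theo: HPG is f.d. GP1} directly. Local finiteness follows since $\dim_\Bbbk(\Pi_i \otimes \Bbbk G) = |G| \dim_\Bbbk(\Pi_i) < \infty$. The bimodule $(d+1)$-Calabi-Yau property is inherited through the skew-group construction, and the Gorenstein parameter stays equal to $1$: tensoring a graded bimodule resolution of $\Pi$ by $\Bbbk G$ with diagonal $G$-action gives a graded resolution of $\Pi \ast G$ without any additional degree shift, since the action of $G$ is degree-preserving. I expect this second route to be the more efficient write-up.
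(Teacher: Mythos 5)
Your proposal is correct and takes essentially the approach the paper intends: the paper states this corollary without proof, treating it as immediate from the preceding theorem of Le Meur, and your argument simply makes that deduction explicit. Of your two routes, the second (verifying via \Cref{Theo: HPG is f.d. GP1} that the evident grading $(\Pi \ast G)_i = \Pi_i \otimes \Bbbk G$ is locally finite, bimodule $(d+1)$-Calabi-Yau and of Gorenstein parameter $1$) is the cleaner write-up, since it also sidesteps the minor subtlety in the first route of matching the given $G$-action on $\Pi_{d+1}(\Lambda)$ with the action Le Meur induces from its restriction to $\Lambda$.
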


This leads us to the following definition. 

\begin{Def}
    Let $C \subseteq Q_1$ be a higher preprojective cut on $\Bbbk Q/I$, and $G \leq \operatorname{GrAut}(\Bbbk Q/I)$ a finite subgroup acting on $\Bbbk Q$ as in \Cref{Conv: Action on quiver}. Consider the algebra $ \Bbbk(Q \ast G)/I'$ which is Morita equivalent to $(\Bbbk Q/I) \ast G $. Then $C$ induces a higher preprojective cut on $\Bbbk(Q \ast G)/I'$ which is denoted by $C \ast G$.        
\end{Def}

\section{Type (A)}\label{Sec: type (A)}
This type consists of the abelian subgroups of $\SL_3(\Bbbk)$. A construction and classification of cuts for this case has been performed in \cite{DramburgGasanova}, and generalized to the abelian subgroups of $\SL_d(\Bbbk)$ in \cite{DramburgGasanova2}. We summarize the necessary parts of these articles here. Let $G \leq \SL_3(\Bbbk)$ be finite abelian, denote its order by $n = |G|$, and denote the defining representation by $\rho \colon G \hookrightarrow \SL_3(\Bbbk)$. The fact that $\rho$ is faithful is equivalent to the fact that its irreducible summands $\rho = \rho_1 \oplus \rho_2 \oplus \rho_3$ generate the dual group $\hat{G} = \Hom( G, \mathbb{C}^\ast)$, and we can view $Q_G$ as the Cayley graph of $\hat{G}$ with respect to this generating set. 

To give a more concise description of the McKay quiver $Q = Q_G$, we first define a universal cover. Fix the lattice $\mathbb{Z}^2$ with the basis $e_1, e_2$, and fix a third vector $e_3 = -(e_1 + e_2)$. The infinite quiver $\hat{Q}$ is defined via 
\begin{align*}
    \hat{Q}_0 = \mathbb{Z}^2, \quad \hat{Q}_1 = \{ (x \to x + e_i) \mid 1 \leq i \leq 3 \}.
\end{align*}

Next, consider the homomorphism
\[ q \colon \mathbb{Z}^2 \to \hat{G}, e_i \mapsto \rho_i,  \]
and denote its kernel by $\Ker(q) = L$. Both $\mathbb{Z}^2$ and $L$ act on $\hat{Q}$ in an obvious way, it is easy to see that we can identify
\[ Q = Q_G = \hat{Q}/L.  \]

Furthermore, in this way, each arrow $a$ in $\hat{Q}$ and hence in $Q$ can be given a unique \emph{type} $\theta(a) = i$, which is the number $i \in \{ 1, 2,3\}$ such that $a = (x \to x + e_i)$. A cut $C \subseteq Q_1$ then has the \emph{type}
\[ \theta(C) = (\# \{ a \in C \mid  \theta(a) = i \} )_{ 1 \leq i \leq 3}. \]

Using the language of types, we can identify the \emph{elementary cycles}. Those are the $3$-cycles in $Q$ which consist of arrows of $3$ distinct types. 

The last piece of notation we need is the specific embedding $ L  \xhookrightarrow{B} \mathbb{Z}^2$, where we view $B \in \mathbb{Z}^{2 \times 2}$ as a matrix with respect to the basis $e_1, e_2$ of $\mathbb{Z}^2$ and an arbitrary basis of $L$. 

\begin{Rem} \label{Rem: Shape of B matrix}
    The matrix $B$ is only unique up to right $\operatorname{GL}_2(\mathbb{Z})$-multiplication, corresponding to a change of basis for $L$. It will be convenient to choose certain nice representatives, so we note that we choose $B$ without loss of generality so that $\det(B) = |G|$. Furthermore, we can choose 
    \[ B = \left( \begin{matrix}
    a & b \\ 0 & c
    \end{matrix} \right), \] 
    to be upper triangular with only nonnegative entries and such that $b < a$, by considering the Hermite normal form. 
\end{Rem}

\begin{Theo}\cite[Theorem 7.7, Theorem 7.12]{DramburgGasanova} \label{Theo: SL3 type (A) classification}
    Let $\gamma \in \mathbb{Z}^{1 \times 3}$. Then there exists a higher preprojective cut $C$ on $Q = Q_G$ such that $\theta(C) = \gamma$ if and only if $\gamma_1 + \gamma_2 + \gamma_3 = n = |G|$ and $\gamma_i > 0$ for all $i$ and 
    \[ (\begin{matrix}
    \gamma_1 & \gamma_2 
    \end{matrix}) \cdot  B  \in n \mathbb{Z}^{1 \times 2}. \]
    Furthermore, if $C$ is a higher preprojective cut, then $C$ is mutation equivalent to all cuts $C'$ with $\theta(C) = \theta(C')$. 
\end{Theo}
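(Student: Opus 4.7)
The plan is to split the proof into two parts: existence of a cut of a given type $\gamma$, and mutation equivalence of any two cuts with the same type. A cut on $\Bbbk Q_G / I$ is a $\{0,1\}$-valued grading on arrows that turns $\Bbbk Q_G/I$ into a higher preprojective algebra. By Koszulity and the discussion preceding \Cref{Theo: HPG is f.d. GP1}, this amounts to two requirements: the superpotential, which is a signed sum of elementary $3$-cycles of $Q_G$, is homogeneous of degree $1$, and the degree-zero part is $2$-representation infinite. In particular, every elementary $3$-cycle must contain exactly one arrow of $C$.

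For necessity of the condition $\gamma_1 + \gamma_2 + \gamma_3 = n$, observe that each arrow of $Q_G$ sits in exactly two oriented elementary $3$-cycles, obtained by completing it with one of the two orderings of the remaining two types, and that $Q_G$ has $2n$ such cycles in total. Double-counting arrow-cycle incidences yields $2\lvert C\rvert = 2n$, hence $\gamma_1 + \gamma_2 + \gamma_3 = n$. For necessity of $\gamma_i > 0$: if $\gamma_i = 0$, all type-$i$ arrows lie in degree zero; since these arrows form a fixed-point-free permutation of the vertex set, they produce oriented cycles in the degree-zero quiver, forcing infinite global dimension and contradicting $2$-representation infiniteness.

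For the necessity of the divisibility condition $(\gamma_1,\gamma_2) \cdot B \in n \mathbb{Z}^{1 \times 2}$ and simultaneously for sufficiency, I would use an explicit construction via a color function on the universal cover. Define $\phi \colon \mathbb{Z}^2 \to \mathbb{Z}/n$ by $\phi(a,b) = a\gamma_1 + b\gamma_2 \bmod n$ and declare the arrow $v \to v + e_i$ in the candidate cut precisely when the canonical representative $\tilde\phi(v) \in \{0, \ldots, n-1\}$ satisfies $\tilde\phi(v) \geq n - \gamma_i$. The divisibility condition is exactly what ensures $\phi$ descends to $\mathbb{Z}^2/L = Q_{G,0}$, so any cut of type $\gamma$ lifts to such a color function and forces the condition. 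Conversely, when the condition holds, the construction above is well-defined on $Q_G$; going around any elementary $3$-cycle increases $\phi$ by $\gamma_1 + \gamma_2 + \gamma_3 = n$, so exactly one of the three arrows wraps and hence lies in $C$. A count using that $\gcd(\gamma_1,\gamma_2,n)$ divides each $\gamma_i$ (since $\gamma_3 = n - \gamma_1 - \gamma_2$) shows that the number of type-$i$ arrows in $C$ equals $\gamma_i$. Finally, one must verify $2$-representation infiniteness of the degree-zero component, which follows from Koszulity together with \Cref{Theo: HPG is f.d. GP1}.

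For mutation equivalence, a cut mutation at a vertex $v$ is permitted when $v$ is a cut-source or cut-sink, and it flips the degrees of all six arrows incident to $v$. Since each vertex of $Q_G$ has one incoming and one outgoing arrow of each type, such a mutation preserves $\theta(C)$. Given two cuts $C, C'$ with $\theta(C) = \theta(C')$, I would encode them as height functions $h, h'$ on $\hat Q_0$ and iteratively mutate at local maxima of $h - h'$ until $h = h'$. The main obstacle, and the most technical step of the proof, is showing that each step in this reduction produces a valid higher preprojective cut: one must check that the intermediate cut has a $2$-representation infinite degree-zero algebra and that a mutable vertex is always available until $h = h'$. This is a subtle combinatorial argument on the periodic lattice $\hat Q/L$.
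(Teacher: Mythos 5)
First, a caveat on the comparison: the paper does not prove this statement. It is imported verbatim from \cite{DramburgGasanova} (Theorems 7.7 and 7.12 there), and the only ingredient of the proof reproduced in this paper is the explicit construction of the cut $C_\gamma$ in \Cref{Theo: Formula for cut}. Measured against that, your sufficiency argument is essentially the cited construction: your colour function $\phi$ is the map $\xi_\gamma$ (up to rescaling by $\gcd(\gamma_1,\gamma_2,\gamma_3)$, which is harmless since that gcd divides every $\gamma_i$), your wrap-around criterion $\tilde\phi(v)\geq n-\gamma_i$ is exactly the paper's condition $\overline{\xi_\gamma}(x+L')>\overline{\xi_\gamma}(x+e_i+L')$, and the counting arguments for $\gamma_1+\gamma_2+\gamma_3=n$, for $\gamma_i>0$, and for $\#\{\text{type-}i\text{ arrows in }C_\gamma\}=\gamma_i$ are correct. (You should still say a word on local finiteness: $\tilde\phi$ strictly increases along degree-$0$ arrows because $\gamma_i>0$, so the degree-$0$ subquiver is acyclic and $\Lambda_0$ is finite dimensional, which is what \Cref{Theo: HPG is f.d. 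GP1} needs.)

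There are, however, two genuine gaps. The first is the necessity of the divisibility condition $(\gamma_1\ \gamma_2)\cdot B\in n\mathbb{Z}^{1\times 2}$. You assert that ``any cut of type $\gamma$ lifts to such a colour function,'' i.e.\ to the \emph{linear} function $\phi(a,b)=a\gamma_1+b\gamma_2$. If that were true, every cut of type $\gamma$ would literally coincide with $C_\gamma$ and the mutation-equivalence clause of the theorem would be vacuous; so the claim cannot be right as stated. What a general cut gives you is only a path-degree function $D(u,v)$ on $\hat Q$ (well defined because the commutativity relations and the superpotential terms must be homogeneous), and this function need not be linear. The divisibility condition has to be extracted from this weaker data, e.g.\ by evaluating the cut-degree of closed paths in $Q_G$ whose lifts in $\hat Q$ terminate at elements of $L$ and comparing against the global counts $\gamma_i$; that step is entirely missing from your argument. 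The second gap is the one you flag yourself: for the mutation-equivalence statement, observing that a mutation at a cut-sink or cut-source preserves $\theta$ is the easy half, while the substance --- that any two cuts of equal type are connected by a chain of admissible mutations through genuine higher preprojective cuts --- is precisely the content of the second cited theorem and is not supplied. As written, your text establishes sufficiency of the three conditions and necessity of the first two, but neither necessity of the divisibility condition nor the mutation claim.
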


The constructive part will be useful when constructing cuts for groups of type (C) and (D), so we summarize it here. Given a vector $\gamma \in \mathbb{Z}^{1 \times 3}$ such that $\gamma_1 + \gamma_2 + \gamma_3 = n$ and $\gamma_i > 0$ for all $i$, and 
\[ (\begin{matrix}
    \gamma_1 & \gamma_2 
    \end{matrix}) \cdot  B  \in n \mathbb{Z}^{1 \times 2}. \]
consider the map  
\[ \xi_\gamma \colon \mathbb{Z}^2 \to \mathbb{Z}/n\mathbb{Z}, (x_1e_1 + x_2 e_2) \mapsto (\gamma_1 x_1 + \gamma_2 x_2) + n\mathbb{Z}.   \]
Note that the kernel $L' = \Ker(\xi_\gamma)$ contains $L$. The image of this map is a cyclic group of order $n' = \frac{n}{\gcd(\gamma_1, \gamma_2, \gamma_3)}$, so we obtain an induced isomorphism 
\[ \overline{\xi_\gamma} \colon \mathbb{Z}^2/L' \to \mathbb{Z}/n'\mathbb{Z}.  \]
We then construct a cut $C_\gamma$ as follows. An arrow $a \colon (x_1e_1 + x_2e_2) + L \to (x_1e_1 + x_2 e_2 + e_i) + L$ is in $C_\gamma$ if and only if 
\[ \overline{\xi_\gamma}((x_1e_1 + x_2e_2) + L') > \overline{\xi_\gamma}((x_1e_1 + x_2e_2 + e_i) + L'), \]
where the inequality is taken for the smallest nonnegative representatives of the elements in $\mathbb{Z}/n'\mathbb{Z}$. 

\begin{Theo}\cite[Proposition 7.30]{DramburgGasanova} \label{Theo: Formula for cut}
    With the setup above, the set $C_\gamma$ is a higher preprojective cut of type $\gamma$ on $Q$. 
\end{Theo}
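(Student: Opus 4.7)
The plan is to verify that $C_\gamma$ (i) is well-defined on $Q = \hat{Q}/L$, (ii) contains exactly one arrow in each elementary $3$-cycle, (iii) has type $\gamma$, and (iv) induces a higher preprojective grading. Set $d = \gcd(\gamma_1, \gamma_2, \gamma_3)$ and $\alpha_\ell = \gamma_\ell/d$, so that $\alpha_1 + \alpha_2 + \alpha_3 = n'$ with each $\alpha_\ell \in \{1, \ldots, n'-1\}$. Under the isomorphism $\overline{\xi_\gamma} \colon \mathbb{Z}^2/L' \xrightarrow{\sim} \mathbb{Z}/n'\mathbb{Z}$, an arrow of type $\ell$ increments the value by $\alpha_\ell$, so the combinatorics of $C_\gamma$ is controlled by addition of the $\alpha_\ell$'s modulo $n'$.

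For (i), writing $L = B \mathbb{Z}^2$ in the chosen basis, the hypothesis $(\gamma_1, \gamma_2) \cdot B \in n \mathbb{Z}^{1 \times 2}$ is exactly the condition $\xi_\gamma(L) = 0$, i.e., $L \subseteq L'$. Hence $\overline{\xi_\gamma}$ factors through $\mathbb{Z}^2/L$, and the inequality defining $C_\gamma$ descends to $Q$. For (ii), fix an elementary $3$-cycle based at $v$ with arrow types $(i,j,k)$, and let $s_0, s_1, s_2, s_3 = s_0$ denote the smallest nonnegative representatives of the values of $\overline{\xi_\gamma}$ along the cycle. Each step satisfies $s_{m+1} \equiv s_m + \alpha_{j_m} \pmod{n'}$, and an arrow belongs to $C_\gamma$ exactly when its step wraps, meaning $s_m + \alpha_{j_m} \geq n'$. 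Since each $\alpha_\ell < n'$, each step wraps at most once, and the total number of wraps equals $(\alpha_i + \alpha_j + \alpha_k)/n' = 1$. Thus exactly one arrow of each elementary $3$-cycle lies in $C_\gamma$.

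For (iii), the type-$\ell$ arrow starting at $v \in \mathbb{Z}^2/L$ lies in $C_\gamma$ iff $\overline{\xi_\gamma}(v) \geq n' - \alpha_\ell$, a condition satisfied by exactly $\alpha_\ell$ residues in $\mathbb{Z}/n'\mathbb{Z}$. Via the surjection $\mathbb{Z}^2/L \twoheadrightarrow \mathbb{Z}^2/L'$, each residue has $[L':L] = d$ preimages, so there are $\alpha_\ell \cdot d = \gamma_\ell$ arrows of type $\ell$ in $C_\gamma$.

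The main obstacle is (iv): checking that $C_\gamma$ is a higher preprojective cut and not merely a homogeneous decomposition. The key observation will be that any path of length $n'$ in $\hat{Q}$ accrues a total integer increment of at least $n'$ in $\overline{\xi_\gamma}$ and therefore wraps at least once, so contains at least one arrow of $C_\gamma$. Hence the degree-zero subalgebra $\Lambda_0 \subseteq \Bbbk Q/I$ is finite-dimensional and the grading on $\Bbbk Q/I$ is locally finite. Since each elementary $3$-cycle carries total degree $1$, the superpotential governing the Jacobi relations is homogeneous of degree $1$, and the resulting grading on the $3$-Calabi-Yau algebra $\Bbbk Q/I$ has Gorenstein parameter $1$. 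By \Cref{Theo: HPG is f.d. GP1}, this is a $3$-preprojective grading, and $C_\gamma$ is indeed a higher preprojective cut of type $\gamma$.
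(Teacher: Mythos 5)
The paper does not actually prove this statement: it is imported verbatim by citation from \cite[Proposition 7.30]{DramburgGasanova}, so there is no in-paper proof to compare against. Judged on its own, your argument is correct and essentially complete, and it follows the natural route for this result. Steps (i)--(iii) are clean: $L\subseteq L'$ is exactly the matrix condition, the ``exactly one wrap per elementary cycle'' count follows from $\alpha_1+\alpha_2+\alpha_3=n'$ with each $1\le\alpha_\ell\le n'-2$, and the type count $\alpha_\ell\cdot[L':L]=\gamma_\ell$ is right. Your step (iv) is also sound, but it is worth being explicit about the one external input it leans on: passing from ``every elementary $3$-cycle has total degree $1$'' to ``Gorenstein parameter $1$'' uses that $\Bbbk Q/I$ is the Jacobi algebra of $(Q,\omega)$ with $\omega$ the signed sum of elementary cycles, and that the bimodule $3$-Calabi--Yau structure is realised by the self-dual superpotential complex of \cite{BSW}; homogeneity of $\omega$ in degree $1$ then makes that resolution graded with the required shift $(-1)$, and incidentally also shows the relations (cyclic derivatives of $\omega$) are homogeneous, so the grading is well defined on $\Bbbk Q/I$ in the first place. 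Your wrap-counting bound showing that every path of length $n'$ meets $C_\gamma$ correctly gives local finiteness, so \Cref{Theo: HPG is f.d. GP1} applies and the conclusion follows. In short: no gaps, just make the appeal to the superpotential resolution explicit rather than implicit.
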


\section{Types (C) and (D)}\label{Sec: (C) and (D)}
In this section, we investigate the subgroups of $\SL_3(\Bbbk)$ of type (C) and (D). Type (C) is sometimes called \emph{trihedral} \cite{ItoTrihedral}. Following the presentation of Yau and Yu \cite{YauYu}, the groups of type (C) and (D) are generated by a \emph{non-trivial} diagonal group $A$ of type (A), and one (respectively, two) more matrices. We change notation from Yau and Yu slightly, and define 
\begin{align*}
t = \left( \begin{smallmatrix} 0 & 0 & 1 \\ 1 & 0 & 0 \\ 0 & 1 & 0 \end{smallmatrix} \right)  \quad   r = \left( \begin{smallmatrix}
    0 & \alpha & 0 \\  \beta & 0 & 0 \\ 0 & 0 & \gamma
\end{smallmatrix} \right), 
\end{align*}
where $\alpha\beta\gamma = -1$. 
Then a finite subgroup $G \leq \SL_3(\Bbbk)$ for which $V = \Bbbk^3$ is an irreducible representation is said to be of type (C) if 
\[ G = \langle A, t \rangle \]
and of type (D) if 
\[ G = \langle A, t, r \rangle. \]
These groups can be found in $\SL_3(\Bbbk)$ by considering the subgroups $G \leq \SL_3(\Bbbk)$ for which $V = \Bbbk^3$ is an irreducible, but imprimitive representation. Being imprimitive means that $V = V_1 \oplus V_2 \oplus V_3$ decomposes into subspaces such that for every $g \in G$ there exists a permutation $\sigma \in S_3$ such that $g(V_i) = V_{\sigma(i)}$. This gives rise to a homomorphism $G \to S_3$, and since $V$ is irreducible the image of this homomorphism is a transitive permutation group. In type (C), the stabilizer in $S_3$ of each space $V_i$ is trivial, while in type (D) it is not. Clearly, every group of type (D) contains a group of type (C). In the following, we fix $T = \langle t \rangle $ and $H = \langle t,r \rangle$. 

It is important to point out that we always assume that $V = \Bbbk^3$ is an irreducible representation of $G$, which puts some mild restrictions on which $A$ we can consider. The following is essentially the only edge case that can appear. 

\begin{Exp}
    Let $A = \langle \frac{1}{3} (1,1,1) \rangle$ be cyclic of order $3$. Then the group $G = \langle A, t \rangle$ is abelian, since $T$ acts trivially on $A$ by conjugation. Thus, $V = \Bbbk^3$ is not an irreducible representation of $G$, and hence $G$ is not of type (C). 
\end{Exp}

Before we continue, we make some easy observations about the values $\alpha$, $\beta$ and $\gamma$ that can occur in $r$. 

\begin{Rem}
    Consider the group $H = \langle t,r \rangle$. Since $H \leq G$ is finite, all its elements have finite order. In particular, the element $r^2$, which is a diagonal matrix with $\gamma^2$ as one of the entries, has finite order. Hence we conclude that $\gamma$ is a root of unity. Similarly, the element $(tr)^2$ is a diagonal matrix with $\alpha^2$ as one of the entries, proving that $\alpha$ is a root of unity. Since $\alpha \beta \gamma = 1$, the three values $\alpha$, $\beta$ and $\gamma$ are all roots of unity.
\end{Rem}

\subsection{The group structure}\label{SSec: Group structure}
Now we investigate the group structure. As before, we consider a group $G =  \langle A, t \rangle $ of type (C) or $G = \langle A, t, r \rangle$ of type (D). Much of the structure can be described in parallel, so we only specify the type when necessary. 

It is important to point out that while $A \leq G$ is an abelian subgroup, it is in general not normal. Phrased differently, the groups $T, H \leq G$ do not in general act on arbitrary $A$. Instead, $G$ contains a larger abelian normal subgroup $A \leq N $ which we now describe. Its structure is limited by the fact that $T$ and $H$ act on it by conjugation. 

\begin{Lem}
    Let $G$ be of type (C) or (D). Then $N = \{ g \in G \mid g \text{ is diagonal} \}$ is an abelian normal subgroup of $G$.
\end{Lem}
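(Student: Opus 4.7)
The plan is to separate the two claims. First I would verify the easy part: $N$ is a subgroup and is abelian. Since the identity is diagonal, since the product of two diagonal matrices is diagonal, and since the inverse of an invertible diagonal matrix is diagonal, $N$ is closed under the group operations, hence a subgroup of $G$. Any two diagonal matrices commute, so $N$ is abelian.

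For normality I would reduce everything to the observation that every element of $G$ is a \emph{monomial} matrix, i.e.\ a matrix with exactly one nonzero entry in each row and each column. Indeed, the generators of $G$ are monomial: elements of $A$ are diagonal (hence monomial), $t$ is a permutation matrix, and in type (D) the extra generator $r$ is monomial by its very definition. Since the product of two monomial matrices is again monomial, and so is the inverse, the set $M \leq \GL_3(\Bbbk)$ of invertible monomial matrices is a subgroup, and therefore $G \leq M$.

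Next I would check that the subgroup of diagonal matrices $D$ is normal in $M$. Any monomial matrix can be written as $m = p d'$ with $p$ a permutation matrix and $d'$ diagonal; for $d \in N$ we have
\[ m d m^{-1} = p d' d (d')^{-1} p^{-1} = p d p^{-1}, \]
and conjugation of $\diag(d_1, d_2, d_3)$ by the permutation matrix $p$ corresponding to $\sigma \in S_3$ yields $\diag(d_{\sigma^{-1}(1)}, d_{\sigma^{-1}(2)}, d_{\sigma^{-1}(3)})$, which is again diagonal. Consequently $m d m^{-1}$ is a diagonal element of $G$, i.e.\ lies in $N$. Applying this with $m = g \in G \subseteq M$ gives $g N g^{-1} \subseteq N$ for every $g \in G$, so $N$ is normal in $G$.

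There is essentially no obstacle; the only nontrivial ingredient is recognizing $G \leq M$, which follows immediately from the shape of the generators $A$, $t$, and $r$. The computation above also shows explicitly how the quotient $G/N$ acts on $N$: conjugation by a monomial matrix permutes the diagonal entries according to the underlying permutation, which will later yield the semi-direct decomposition $G \simeq N \rtimes K$ with $K \in \{C_3, S_3\}$.
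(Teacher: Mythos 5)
Your proof is correct and follows essentially the same route as the paper: the paper also verifies normality by observing that conjugation (by $t$ or $r$) permutes the diagonal entries. Your packaging of this via the subgroup of monomial matrices containing $G$ is a slightly more systematic way of saying the same thing, and fills in the routine closure checks the paper leaves implicit.
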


\begin{proof}
    It is easy to see that the subgroup consisting of diagonal matrices is abelian. So see that it is normal, note that conjugation by $t$ or $r$ amounts to permuting diagonal entries.
\end{proof}

Note that in type (C), we have $N = A^T$, and this leads to a semi-direct decomposition. Replicating the same situation for type (D) involves more work since generating $N$ is more involved. Indeed, the matrix $r$ can square to a non-trivial diagonal matrix. Therefore, we need to replace $H$ by a different group, so that we can decompose $G$ as a semi-direct product. 

\begin{Lem}\label{Lem: Semi-direct decomposition}
    Let $G$ be of type (C) or (D). Then $G = N \rtimes K$ is a semi-direct product. In type (C), the complement is $K = T$, and in type (D) the complement is $K \simeq S_3$. 
\end{Lem}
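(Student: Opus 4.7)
The type (C) case is essentially formal. Setting $N$ to be the group of diagonal matrices in $G$ (abelian and normal by the preceding lemma), observe that $N \cap T = \{I\}$ since the non-identity elements $t, t^2$ of $T$ are not diagonal, and that $G = NT$ by the usual rewriting $ta = (tat^{-1})t$ for $a \in A$, noting that $t^i A t^{-i} \subseteq N$. Hence $G = N \rtimes T$.

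In type (D), I would first show $G/N \simeq S_3$ via the homomorphism sending a monomial matrix to its underlying permutation: it has kernel $N$, and its image contains both a $3$-cycle (image of $t$) and a transposition (image of $r$). Producing a complement $K \simeq S_3$ amounts to finding $s \in rN$ and $t' \in tN$ such that $s^2 = t'^3 = (st')^2 = I$. The naive choice $s = r$, $t' = t$ generally fails, since $r^2$ and $(rt)^2$ are typically non-trivial diagonal elements of $N$. I would therefore write $s = ra$ and $t' = tb$ with $a, b \in N$ to be determined.

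The relation $t'^3 = I$ is automatic: for any diagonal $b \in \SL_3$, the product $(tb)^3$ equals $\det(b) \cdot I = I$. Expanding $(ra)^2$ and $(st')^2$ as monomial matrix products and using $\alpha\beta\gamma = -1$, the conditions $s^2 = (st')^2 = I$ reduce to $a_1 a_2 = -\gamma$, $a_3 = -\gamma^{-1}$, and $b_1 = -\beta a_1$. An explicit solution is $a := tr^2 t^{-1}$ (which has diagonal $\diag(\gamma^2, -\gamma^{-1}, -\gamma^{-1})$) and $b := r^2 \cdot (tr)^2 \cdot tr^2 t^{-1}$; both are manifestly in $N$ as products of diagonal elements of $G$, and the first entry of $b$ is $(-\gamma^{-1})(\beta\gamma)(\gamma^2) = -\beta\gamma^2 = -\beta a_1$, as required.

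Once $s^2 = t'^3 = (st')^2 = I$ holds in $G$, the subgroup $K := \langle s, t' \rangle$ is a quotient of $S_3$; since $s, t'$ map to a transposition and a $3$-cycle in $G/N \simeq S_3$, the quotient map $K \to G/N$ is surjective, hence an isomorphism. Thus $|K| = 6$ and $K \cap N = \{I\}$ by cardinality, giving $G = N \rtimes K$. The main obstacle is the constraint analysis that pins down the correct $a$ and $b$; once these are in hand, the remaining verifications are mechanical applications of $\alpha\beta\gamma = -1$, and the membership $a, b \in N$ is immediate from the explicit construction.
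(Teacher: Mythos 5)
Your proof is correct and follows essentially the same strategy as the paper: both exhibit an explicit complement $K \simeq S_3$ by correcting $t$ and $r$ with diagonal elements of $N$ (your $s = r\,(tr^2t^{-1})$ and $t' = tb$ play the role of the paper's $i_1 = tr^2t^{-1}r$ and $i_1 i_2$), and your identities $a_1a_2 = -\gamma$, $a_3 = -\gamma^{-1}$, $b_1 = -\beta a_1$ check out against $\alpha\beta\gamma = -1$. The only (harmless) difference is that you close the argument via the surjection $K \to G/N \simeq S_3$ and a counting argument, whereas the paper verifies $G = N\langle i_1, i_2\rangle$ by explicitly factoring $t$ and $r$.
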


\begin{proof}
    In type (C), the statement is obvious since $T$ contains no diagonal matrices and $G = NT$. 

    In type (D), we find an explicit complement of $N$ in $G$. Let $i_1 = t r^2 t^{-1}r$ and $i_2 = t^2 r^2 t^{-1} r t^{-1}$. One can explicitly compute that 
    \begin{align*}
     i_1= \left( \begin{smallmatrix}
    0 & \alpha\gamma^2 & 0 \\  \alpha\beta^2 & 0 & 0 \\ 0 & 0 & -1
\end{smallmatrix} \right),  \quad   i_2= \left( \begin{smallmatrix}
    -1 & 0 & 0 \\  0 & 0 & \alpha\gamma^2 \\ 0 & \alpha\beta^2 & 0
\end{smallmatrix} \right).
    \end{align*}
    One can also compute that 
    \[ i_1i_2=\left( \begin{smallmatrix}
    0 & 0 & \alpha^2\gamma^4 \\  -\alpha\beta^2 & 0 & 0 \\ 0 & -\alpha\beta^2 & 0
\end{smallmatrix} \right)\] and that $i_1^2=i_2^2=(i_1 i_2)^3=I_3$. Hence the subgroup $\langle i_1,i_2\rangle$ is isomorphic to $S_3$. In fact, all the elements of $\langle i_1,i_2\rangle$ are 
\begin{alignat*}{2}
i_1 &= \left( \begin{smallmatrix}
    0 & \alpha\gamma^2 & 0 \\  \alpha\beta^2 & 0 & 0 \\ 0 & 0 & -1
\end{smallmatrix}\right), & i_2 &= \left( \begin{smallmatrix}
    -1 & 0 & 0 \\  0 & 0 & \alpha\gamma^2 \\ 0 & \alpha\beta^2 & 0
\end{smallmatrix}\right),\\ 
i_1i_2 &= \left( \begin{smallmatrix}
    0 & 0 & \alpha^2\gamma^4 \\  -\alpha\beta^2 & 0 & 0 \\ 0 & -\alpha\beta^2 & 0
\end{smallmatrix} \right), \hspace{0.5cm} & i_2i_1 &=\left( \begin{smallmatrix}
    0 & -\alpha\gamma^2 & 0 \\  0 & 0 & -\alpha\gamma^2 \\ \alpha^2\beta^4 & 0 & 0\end{smallmatrix} \right),\\ i_1i_2i_1=i_2i_1i_2 &=\left( \begin{smallmatrix}
    0 & 0 & -\alpha^2\gamma^4 \\  0 & -1 & 0 \\ -\alpha^2\beta^4 & 0 & 0\end{smallmatrix} \right), & I_3 &=\left( \begin{smallmatrix}
    1 & 0 & 0 \\  0 & 1 & 0 \\ 0 & 0 & 1\end{smallmatrix} \right).
\end{alignat*}
Clearly, $\langle i_1,i_2\rangle \cap N = 1$. It remains to show that every element in $G$ can be written as a product of an element in $N$ and an element in $\langle i_1,i_2\rangle$. Since $G = \langle N, r, t\rangle$, it suffices to prove this for $t$ and $r$. To factor $t$, one can explicitly compute that $t i_2^{-1} i_1^{-1} \in N$ and hence $t$ can be written as $(ti_2^{-1}i_1^{-1}) (i_1i_2)$. To factor $r$, first note that $r^2\in N$. Since $N$ is normal, we get $tr^{-2}t^{-1}\in N$, hence we can write $r=(tr^{-2}t^{-1})(tr^2t^{-1}r)=(tr^{-2}t^{-1})i_1$.
\end{proof}

Let us illustrate the decomposition with some examples. 

\begin{Exp}\label{Exp: N groups in 3 ways}
    Let $A = \langle \frac{1}{3}(1,2,0) \rangle$. Since conjugation by $T$ permutes the entries of diagonal matrices, we obtain that the normal subgroup $N_1 = A^T$ for the group $G_1 = \langle A, T \rangle$ of type (C) is 
    \[ N_1 = \langle \frac{1}{3}(1,2,0), \frac{1}{3}(2,0,1) \rangle \simeq C_3 \times C_3. \]
    Thus, we have that $G_1 = N_1 \rtimes T$.
    
    Choosing the matrix $ r = \left( \begin{smallmatrix}
        0 & -1 & 0 \\ 1 & 0 & 0 \\ 0 & 0 & 1
    \end{smallmatrix} \right) $ gives rise to a group $G_2 = \langle A, T, r \rangle$ of type (D). This group clearly contains the diagonal matrix $r^2 = \frac{1}{2}(1,1,0)$, and conjugation by $T$ produces the diagonal matrices $\frac{1}{2}(1,0,1)$ and $\frac{1}{2}(0,1,1)$. One can see that all diagonal matrices are generated by $A^T$ and $(r^2)^T$, i.e.\ we have 
    \[ N_2 = \langle A^T, (r^2)^T \rangle \simeq C_6 \times C_6, \]
    and a semi-direct decomposition $ G_2 = N_2 \rtimes S_3$.
\end{Exp}

In particular, we want to point out that different choices of $A$ and $r$ might produce the same groups. This illustrates why our results depend on the group $N$, and not just on $A$. 

\begin{Exp}
    Consider the group $A_1 = \langle \frac{1}{5}(1,1,3) \rangle$, and the matrix $r_1= \left( \begin{smallmatrix}
        0 & -1 & 0 \\ 1 & 0 & 0 \\ 0 & 0 & 1
    \end{smallmatrix} \right) $. The resulting group $G_1 = \langle A_1, T, r_1 \rangle$ has order $600$ and contains the subgroup $N_1$ of diagonal matrices of order $100$, and $N_1 \simeq C_{10} \times C_{10}$. We can see this by noting that $A_1^T$ produces a group $C_5 \times C_5$, and $(r_1^2)^T$ contributes $C_2 \times C_2$. Now, let us construct this group by instead starting with $A_2 = \langle \frac{1}{2}(1,1,0) \rangle$ and choosing    
    $r_2= \left( \begin{smallmatrix}
        0 & \epsilon_5^4 & 0 \\ \epsilon_5^2 & 0 & 0 \\ 0 & 0 & -\epsilon_5^4
    \end{smallmatrix} \right) $ where $\epsilon_5$ is a primitive fifth root of unity. Then $G_2 = \langle A_2 , T, r_2 \rangle$ also is of order $600$. Denote by $N_2$ the subgroup of diagonal matrices. The group $A_2^T$ contributes a $C_2 \times C_2$ to $N_2$, and $r_2^2 = \frac{1}{5}(1,1,3)$. From this, we see that $N_1 \simeq N_2 \simeq C_{10} \times C_{10}$, and it is easy to check that indeed $G_1 = G_2$. 
\end{Exp}

We now fix the decomposition $G = N \rtimes K$, as well as the elements $i_1$ and $i_2$. Note that we gave explicit generators for $K$, so we are justified in writing equality, not just isomorphism. Next, we give a precise description of what groups $N$ can appear. Since we will use this in conjunction with methods from \Cref{Sec: type (A)}, we phrase our results in the corresponding terminology. 

We begin by considering the action of the complement $K$ on $N$, and the induced action on $\hat{N}$. Note that this action is essentially the same as the one that arises from the imprimitivity of the group $G$. 

\begin{Lem}
	Let $G = N \rtimes K \leq \SL_3(\Bbbk)$ be a group of type (C) or (D). Denote by $\rho \colon G \to \SL_3(\Bbbk)$ the embedding, by $\rho_{|N}$ the restriction to $N$, and decompose $\rho_{|N} = \rho_1 \oplus \rho_2 \oplus \rho_3$. The group $K$ acts on $N$, and hence on $\hat{N}$. This induced action permutes the summands $\rho_i$, that is we have $\rho_1^K = \{ \rho_1, \rho_2, \rho_3 \}$.  
\end{Lem}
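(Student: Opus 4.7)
The plan is to unwind the definitions and reduce everything to the observation that each element of $G$ is a monomial matrix, so that conjugation of a diagonal matrix by a monomial matrix amounts to a permutation of its diagonal entries.

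First, I would make $\rho_{|N}$ explicit: since $N$ consists of diagonal matrices in $\SL_3(\Bbbk)$, every $n \in N$ has the form $n = \diag(\lambda_1, \lambda_2, \lambda_3)$, and $\rho_i \colon N \to \Bbbk^{\ast}$ is the character $n \mapsto \lambda_i$. The imprimitivity setup used to define types (C) and (D) gives a homomorphism $G \to S_3$ whose kernel is precisely $N$; under the splitting $G = N \rtimes K$, this identifies $K$ with its image in $S_3$, so each $k \in K$ determines a permutation $\sigma_k \in S_3$ acting on the coordinate subspaces $\{V_1, V_2, V_3\}$.

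Next, I would compute the conjugation action directly. Writing $k \in K$ as a monomial matrix corresponding to $\sigma_k$, a short matrix calculation shows that the $i$-th diagonal entry of $knk^{-1}$ is $\lambda_{\sigma_k^{-1}(i)}$ (the ``scaling part'' of $k$ cancels because $N$ is abelian). Translating into characters via the standard convention $(k \cdot \chi)(n) = \chi(k^{-1}nk)$, this gives $k \cdot \rho_i = \rho_{\sigma_k(i)}$. In particular, $K$ permutes the set $\{\rho_1, \rho_2, \rho_3\}$.

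To conclude $\rho_1^K = \{\rho_1, \rho_2, \rho_3\}$ it remains to check that $K$ acts transitively on the index set $\{1,2,3\}$. In type (C), this is immediate since $K = \langle t \rangle$ and $t$ cyclically permutes the three standard coordinate subspaces. In type (D), the explicit generators $i_1, i_2$ from \Cref{Lem: Semi-direct decomposition} act on $\{V_1, V_2, V_3\}$ as the transpositions $(1\ 2)$ and $(2\ 3)$ respectively, so $K$ realises the full $S_3$ and transitivity follows. There is no genuine obstacle in the argument; the only point requiring care is keeping the conventions for left-versus-right actions on characters consistent, which is purely a bookkeeping matter and does not affect the conclusion.
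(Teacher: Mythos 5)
Your proposal is correct and follows essentially the same route as the paper: both reduce the claim to the observation that conjugating a diagonal matrix by the monomial generators $t$, $i_1$, $i_2$ permutes its diagonal entries, and both obtain transitivity from the fact that these generators realise a transitive subgroup of $S_3$. The only cosmetic difference is that you phrase the permutation via the imprimitivity homomorphism $G \to S_3$, while the paper just computes the three conjugations directly.
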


\begin{proof}
	The decomposition $\rho_{|N} = \rho_1 \oplus \rho_2 \oplus \rho_3$ arises from the three diagonal entries of the matrices in $N$, meaning that $g \in N$ is precisely the diagonal matrix $\diag(\rho_1(g), \rho_2(g), \rho_3(g))$. It is clear that for any diagonal matrix $\diag(a_1, a_2, a_3)$ we have $\diag(a_1, a_2, a_3)^t =t \diag(a_1, a_2, a_3) t^{-1} = \diag( a_3, a_1, a_2)$. Similarly, using the generators $i_1$ and $i_2$ from \Cref{Lem: Semi-direct decomposition}, we see that $\diag(a_1, a_2, a_3)^{i_1} = i_1 \diag(a_1, a_2, a_3) i_1^{-1} = \diag( a_2, a_1, a_3) $, and the corresponding statement holds for $i_2$. Thus, the induced action on $\hat{N}$ is simply the induced permutation action. 
\end{proof}

Next, we extend this action to $\mathbb{Z}^2 $. 

\begin{Rem}
    Recall from \Cref{Sec: type (A)} that we encode an abelian group $N$ and its embedding $\rho \colon N \to \SL_3(\Bbbk)$ as the quotient of the lattice $\mathbb{Z}^2$. More precisely, we have defined the map 
    \[ q \colon \mathbb{Z}^2 \to \hat{N}, e_i \mapsto \rho_i,  \]
    for a fixed basis $\{e_1, e_2 \} $ of $\mathbb{Z}^2$ and $e_3 = -(e_1 + e_2)$, and write $L = \Ker(q)$.  
    We have that $K$ acts on $\hat{N}$ by permuting the $\rho_i$, so we define the corresponding action on $\mathbb{Z}^2$ permuting the $e_i$ accordingly.
\end{Rem}

\begin{Lem}\label{Lem: T-action leaves L invariant}
	Let $G = N \rtimes K$ be of type (C) or (D). Then the sublattice $L$ is invariant under the above defined action of $K$ on $\mathbb{Z}^2$. 
\end{Lem}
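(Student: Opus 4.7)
The plan is to show that the homomorphism $q\colon \mathbb{Z}^2 \to \hat{N}$ is itself $K$-equivariant, since this immediately implies that its kernel $L$ is $K$-stable. All the work thus lies in checking this equivariance on generators and verifying that both actions are well-defined.

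First, one must verify that the prescribed $K$-action on $\mathbb{Z}^2$, permuting $e_1, e_2, e_3$ according to how $K$ permutes the summands $\rho_1, \rho_2, \rho_3$, really defines a $\mathbb{Z}$-linear action. The delicate point is that $e_3$ is not part of the basis, but the derived element $e_3 = -(e_1+e_2)$. However, for any permutation $\sigma \in S_3$ the identity $e_1+e_2+e_3 = 0$ forces $e_{\sigma(1)}+e_{\sigma(2)}+e_{\sigma(3)} = 0$, so $\sigma(e_3) = e_{\sigma(3)}$ is indeed equal to $-(\sigma(e_1)+\sigma(e_2))$, and the permutation extends consistently to a $\mathbb{Z}$-linear automorphism of $\mathbb{Z}^2$.

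The analogous observation is needed on the target side: the three characters $\rho_i$ satisfy $\rho_1\rho_2\rho_3 = \mathbf{1}$ in $\hat{N}$, because $N \leq \SL_3(\Bbbk)$ forces $\rho_1(g)\rho_2(g)\rho_3(g) = \det(g) = 1$ for every diagonal $g \in N$. Hence the permutation action of $K$ on $\{\rho_1,\rho_2,\rho_3\}$ is compatible with this multiplicative relation, which is precisely what is needed for the previous lemma's induced action on $\hat{N}$ to be well-defined.

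With these two compatibilities in place, equivariance of $q$ reduces to the tautology $q(k \cdot e_i) = q(e_{\sigma(i)}) = \rho_{\sigma(i)} = k \cdot \rho_i = k \cdot q(e_i)$ on each generator $e_i$, where $k \in K$ acts by the permutation $\sigma \in S_3$. Since $K$ acts $\mathbb{Z}$-linearly on $\mathbb{Z}^2$ and as group automorphisms on $\hat{N}$, equivariance extends from the generators to all of $\mathbb{Z}^2$, and therefore $L = \Ker(q)$ is preserved by $K$. I do not expect any real obstacle: the statement is essentially a compatibility check between two parallel constructions that were set up precisely so that $q$ intertwines them, and the only thing to keep honest is the $e_3 = -(e_1+e_2)$ / $\rho_1\rho_2\rho_3 = \mathbf{1}$ bookkeeping.
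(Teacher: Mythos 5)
Your proposal is correct and follows essentially the same route as the paper: both establish that $q$ is $K$-equivariant and conclude that its kernel $L$ is $K$-stable. Your additional bookkeeping about $e_3 = -(e_1+e_2)$ and $\rho_1\rho_2\rho_3 = \mathbf{1}$ is a sensible elaboration of details the paper leaves implicit.
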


\begin{proof}
	Recall that $L$ is the kernel of the morphism 
	\[ q \colon \mathbb{Z}^2 \to \hat{N}, e_i \mapsto \rho_i.\]
	$K$ acts on both $\mathbb{Z}^2$ and on $\hat{N}$ so that $q$ becomes $K$-equivariant, hence the kernel is $K$-invariant. 
\end{proof}

Finally, since we know that $N$ and its representation are encoded in the sublattice $L$, we can describe its embedding. 

\begin{Pro}
\label{pro: matrix shapes}
    Let $G = N \rtimes K$ be of type (C) or (D), and denote by $B$ the matrix for the embedding $ L \hookrightarrow \mathbb{Z}^2$ as in \Cref{Rem: Shape of B matrix}. Then 
    \[ B=\left(\begin{matrix} k_1c & k_2c \\ 0 & c\end{matrix}\right)\] 
    with $0\le k_2<k_1$, $c>0$, and $k_1\mid k_2^2-k_2+1$ for type (C) respectively $k_1\mid \gcd(k_2-2,3)$ for type (D).
\end{Pro}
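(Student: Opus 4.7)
The plan is to take the Hermite normal form $B=\left(\begin{smallmatrix} a & b \\ 0 & c \end{smallmatrix}\right)$ with $0\le b<a$ and $c>0$ provided by \Cref{Rem: Shape of B matrix}, and translate the $K$-invariance of $L$ guaranteed by \Cref{Lem: T-action leaves L invariant} into divisibility conditions on the entries. The key observation is that the action of $K$ on $\mathbb{Z}^2$ permutes the vectors $e_1,e_2,e_3=-e_1-e_2$ by the same permutations with which $K$ permutes the characters $\rho_1,\rho_2,\rho_3$. Thus $t$ acts on $(e_1,e_2)$ by the matrix $\left(\begin{smallmatrix} 0 & -1 \\ 1 & -1\end{smallmatrix}\right)$, and in type (D) the transposition $i_1$ from \Cref{Lem: Semi-direct decomposition} swaps $e_1$ and $e_2$. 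Since the action of $t$ on $L$ factors through $G/N\simeq K$ and agrees with that of its $K$-component $i_1 i_2$, invariance under $t$ and $i_1$ together is equivalent to $K$-invariance in type (D), while in type (C) invariance under $t$ alone suffices.

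Writing $v_1=ae_1$ and $v_2=be_1+ce_2$ for the columns of $B$, I would first impose $t(v_1)\in L$. Since $t(v_1)=ae_2$, expanding in the basis $(v_1,v_2)$ forces $c\mid a$; write $a=k_1c$. Then $t(v_2)=-ce_1+(b-c)e_2$, and the $e_2$-coefficient in the expansion forces $c\mid b$; write $b=k_2c$. The remaining $e_1$-coefficient equation becomes $\alpha k_1=-(k_2^2-k_2+1)$ for some $\alpha\in\mathbb{Z}$, yielding $k_1\mid k_2^2-k_2+1$. Together with $0\le k_2<k_1$ from the Hermite bound, this is exactly the type (C) conclusion.

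For type (D), I additionally impose $i_1(v_2)\in L$ (the invariance $i_1(v_1)\in L$ is automatic from $c\mid a$). Since $i_1(v_2)=ce_1+k_2ce_2$, expanding in $(v_1,v_2)$ produces the extra condition $k_1\mid 1-k_2^2$. Combining this with $k_1\mid k_2^2-k_2+1$, adding the two divisibilities gives $k_1\mid 2-k_2$, and substituting $k_2\equiv 2\pmod{k_1}$ back into $k_2^2-k_2+1$ yields $k_1\mid 3$. These two constraints are together equivalent to $k_1\mid\gcd(k_2-2,3)$, and conversely this single condition recovers both divisibilities. Since $\langle t, i_1\rangle$ realises the full permutation group $S_3$ on $\{e_1,e_2,e_3\}$, no further invariance conditions arise. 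The argument is direct linear algebra in a two-dimensional lattice; the only real care needed is in correctly identifying the $K$-action on $(e_1,e_2)$, where the non-basis vector $e_3=-e_1-e_2$ introduces the slight asymmetry responsible for the nontrivial divisibility conditions.
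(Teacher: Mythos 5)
Your proposal is correct and takes essentially the same route as the paper: starting from the Hermite normal form of $B$, it imposes invariance of $L$ under the $3$-cycle (and, in type (D), the transposition swapping $e_1$ and $e_2$) and reads off the divisibility conditions $c\mid a$, $c\mid b$, $k_1\mid k_2^2-k_2+1$ and $k_1\mid 1-k_2^2$. Your expansion of the transformed basis vectors in the lattice basis is the same computation the paper performs by checking that $B_1^{-1}B_2$ and $B_1^{-1}B_3$ lie in $\GL_2(\mathbb{Z})$, and your reduction of the two divisibilities to $k_1\mid\gcd(k_2-2,3)$ matches the paper's gcd calculation.
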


\begin{proof}
We first deal with type (C), then with type (D).
    Let $B_1=\left(\begin{smallmatrix} a & b \\ 0 & c\end{smallmatrix}\right)$ with $ac=n$ be the matrix defining $L$ such that $\hat{Q}/L = Q_N$. It is clear that $c>0$. Recall that $L$ is invariant under the action of $T$. Since $t(ae_1) = ae_2$ and $t(be_1+ce_2) = be_2+ce_3 = be_2-c(e_1+e_2) = -ce_1+(b-c)e_2$, we get that the matrix $B_2=\left(\begin{smallmatrix} 0 & -c \\ a & b-c\end{smallmatrix}\right)$ also defines $L$. In other words, $B_1^{-1}B_2\in \GL_2(\mathbb{Z})$. Writing out  
    $$
    B_1^{-1}B_2=\frac{1}{ac}\left(\begin{matrix} c & -b \\ 0 & a\end{matrix}\right)\left(\begin{matrix} 0 & -c \\ a & b-c\end{matrix}\right)=\frac{1}{ac}\left(\begin{matrix} -ab & -b^2-c^2+bc \\ a^2 & ab-ac\end{matrix}\right),
    $$
we conclude that $c\mid a$ and $c\mid b$. We write $a=k_1c$, $b=k_2c$. By \Cref{Rem: Shape of B matrix} we have $0\le b<a$, hence $0\le k_2<k_1$. With these substitutions, all the entries of $B_1^{-1}B_2$ are integers except possibly the upper-right entry. We need $ac\mid -b^2-c^2+bc$, in other words, $k_1c^2\mid -k_2^2c^2-c^2+k_2c^2$, that is, $k_1\mid k_2^2-k_2+1$.    

For type (D) all of the above is true as well, hence we can assume $B_1=\left(\begin{smallmatrix} k_1c & k_2c \\ 0 & c\end{smallmatrix}\right)$ with $c>0$, $0\le k_2<k_1$ and $k_1\mid k_2^2-k_2+1$. Additionally, we know that the matrix $B_3=\left(\begin{smallmatrix} 0 & c \\ k_1c & k_2c\end{smallmatrix}\right)$ also defines $L$. In other words, $B_1^{-1}B_3 \in \GL_2(\mathbb{Z})$. We have
\[    B_1^{-1}B_3=\frac{1}{k_1c^2}c\left(\begin{matrix} 1 & -k_2 \\ 0 & k_1\end{matrix}\right)c\left(\begin{matrix} 0 & 1 \\ k_1 & k_2\end{matrix}\right)=\frac{1}{k_1}\left(\begin{matrix} -k_1k_2 & 1-k_2^2 \\ k_1^2 & k_1k_2\end{matrix}\right)\in \GL_2(\mathbb{Z}). \]
All entries of $B_1^{-1}B_3$ are integers except possibly the upper-right entry. Hence we need to impose $k_1\mid k_2^2-1$. Now recall that we already have $k_1\mid k_2^2-k_2+1$. Combining the two, we get 
\[k_1 \mid \gcd(k_2^2-k_2+1,\ k_2^2-1)=\gcd(k_2^2-1,\ k_2-2)=\gcd(k_2-2,3). \qedhere \]
\end{proof}

\begin{Rem}
Let us note some properties we can conclude from the above description of the matrix $B$. 
\begin{enumerate}[wide,  labelindent=0pt]
    \item In \cite{ItoTrihedral}, the author addresses quotient singularities of type (C) and shows that $|N|$ can only be $0$ or $1$ modulo $3$. This is proven by a counting argument. One can also conclude this from \Cref{pro: matrix shapes}. Indeed, $|N|=k_1c^2$. If $3\mid c$, then $|N| \equiv_3 0 $. Otherwise, $c^2 \equiv_3 1 $ and hence $|N|  \equiv_3 k_1 $. We now show that $k_1\not \equiv_3 2$. Since $k_1\mid k_2^2-k_2+1$, it suffices to show that no (positive) divisor of any $k_2^2-k_2+1$ is congruent to $2$ modulo $3$. Further, note that it is enough to show that no \emph{prime} divisor of $k_2^2-k_2+1$ can be of this form. Then if we set $k_2=k+1$, we are left to show that $p \nmid k^2+k+1$ for any $p \equiv_3 2$ and any $k\ge -1$. We fix $p$ and assume that such a $k$ exists. Now we pass to $\mathbb{F}_p$ and, for ease of notation, denote by $\bar{k}$ the corresponding residue class. First, we exclude the case $\bar{k}=1$. Indeed, if $\bar{k}=1$, then $\bar{k}^2+\bar{k}+\bar{1}\neq \bar{0}$. Hence we can assume that $\bar{k}-\bar{1}\in \mathbb{F}_p^{\times}$, and our equation can be rewritten as $(\bar{k}^3-\bar{1})(\bar{k}-\bar{1})^{-1}=0$. Therefore, we need to solve $\bar{k}^3=\bar{1}$. Since $\bar{k}\neq \bar{1}$, we conclude that the order of $\bar{k}$ in $\mathbb{F}_p^{\times}$ is $3$, which is a contradiction since $|\mathbb{F}_p^{\times}|=p-1$ and $3\nmid p-1$ since $p \equiv_3 2$.
    \item In the case of type (D), the matrix $B$ is of the form 
    \[ \left( \begin{matrix}
        c & 0 \\ 0 & c
    \end{matrix} \right) \text{ or } \left( \begin{matrix}
        3c & 2c \\ 0 & c 
    \end{matrix} \right)  \]
    for some $c>0$.
Indeed, from \Cref{pro: matrix shapes}, namely, from $k_1\mid \gcd(k_2-2,3)$ we conclude that either $k_1=1$, in which case $k_2=0$ (since $0\le k_2<k_1$), or $k_1=3$. If $k_1=3$, we get $k_2 \equiv_3 2 $, hence $0\le k_2<k_1$ gives $k_2=2$.

    \end{enumerate}
\end{Rem}

\subsection{The McKay quivers and cuts}\label{SSec: McKay and cuts}
We now describe the McKay quivers, and determine when cuts exist. Again, much of the types (C) and (D) can be treated in parallel, so we only specify the type when necessary. The strategy is to write the McKay quiver $Q_G$ as $Q_N \ast K$, and use the methods from \Cref{Sec: type (A)} for $Q_N$. 

\begin{Lem}
\label{lem: 3 divides}
    Let $G = N \rtimes K$ be of type (C) or (D). If $3  \mid |N|$, then $Q_N$ has a $K$-invariant cut. 
\end{Lem}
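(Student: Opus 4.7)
The natural approach is to use the explicit constructive part of \Cref{Theo: SL3 type (A) classification} and \Cref{Theo: Formula for cut}. Since $K$ permutes the three summands $\rho_1,\rho_2,\rho_3$ of the defining representation of $N$ transitively, a $K$-invariant cut should have a $K$-invariant type. The most symmetric candidate (and indeed the only one) is
\[ \gamma = \left( \tfrac{n}{3}, \tfrac{n}{3}, \tfrac{n}{3} \right), \quad n = |N|, \]
which is integral precisely because we assume $3 \mid n$. The plan is then to verify that this $\gamma$ satisfies the hypotheses of \Cref{Theo: SL3 type (A) classification}, and to check that the resulting cut $C_\gamma$ produced by the formula in \Cref{Theo: Formula for cut} is actually $K$-invariant.

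The first two conditions $\gamma_1+\gamma_2+\gamma_3 = n$ and $\gamma_i > 0$ are immediate. The condition $(\gamma_1,\gamma_2) \cdot B \in n \mathbb{Z}^{1 \times 2}$ is the main arithmetic point. Using $B = \left( \begin{smallmatrix} k_1 c & k_2 c \\ 0 & c \end{smallmatrix}\right)$ from \Cref{pro: matrix shapes} and $n = k_1 c^2$, this reduces to $3 \mid k_1 c$ and $3 \mid (k_2+1)c$. If $3 \mid c$ both hold. Otherwise $3 \mid k_1$, and one has to show $3 \mid k_2 + 1$. In type (C) this follows from $k_1 \mid k_2^2 - k_2 + 1$ together with the direct mod-$3$ check that $k_2^2 - k_2 + 1 \equiv 0 \pmod 3$ forces $k_2 \equiv 2 \pmod 3$; in type (D) one simply notes that $k_1 = 3$ forces $k_2 = 2$. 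This is the main (but still short) obstacle.

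For $K$-invariance, observe that with $\gamma = (n/3,n/3,n/3)$ one has $n' = n/\gcd(\gamma_1,\gamma_2,\gamma_3) = 3$, so the relevant map reduces to
\[ \overline{\xi_\gamma} \colon \mathbb{Z}^2/L' \to \mathbb{Z}/3\mathbb{Z}, \quad x_1 e_1 + x_2 e_2 \mapsto (x_1 + x_2) \bmod 3. \]
Writing an element of $\mathbb{Z}^2$ in the (non-unique) form $y_1 e_1 + y_2 e_2 + y_3 e_3$ using $e_3 = -(e_1+e_2)$, one checks $\overline{\xi_\gamma} \equiv y_1+y_2+y_3 \pmod 3$, which is manifestly invariant under any permutation of $\{e_1,e_2,e_3\}$. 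Since the action of $K$ on $\mathbb{Z}^2$ is by precisely such permutations (and preserves $L$ by \Cref{Lem: T-action leaves L invariant}, hence $L'$), the comparison
\[ \overline{\xi_\gamma}(x + L') > \overline{\xi_\gamma}(x + e_i + L') \]
defining membership in $C_\gamma$ is preserved when $x$ and $e_i$ are simultaneously permuted by $K$. Therefore $C_\gamma$ is stable under the $K$-action on $Q_N$, finishing the proof.
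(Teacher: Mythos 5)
Your proposal is correct and follows essentially the same route as the paper's proof: the same candidate type $\gamma = (n/3,n/3,n/3)$, the same reduction of the divisibility condition to $3 \mid k_1c$ and $3 \mid (k_2+1)c$ via \Cref{pro: matrix shapes} (with only a cosmetic difference in how $3 \mid k_2+1$ is extracted in type (C)), and the same observation that $\overline{\xi_\gamma}$ becomes $y_1+y_2+y_3 \bmod 3$ in symmetric coordinates, hence the cut $C_\gamma$ is invariant under all permutations of $e_1,e_2,e_3$.
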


\begin{proof}
It suffices to show that for both types (C) and (D) the quiver $Q_N$ has an $S_3$-invariant cut. Recall that $B=\left(\begin{smallmatrix} k_1c & k_2c \\ 0 & c\end{smallmatrix}\right)$ with some extra conditions on $k_1$ and $k_2$ depending on the type. Then $3\mid |N|=k_1c^2$, and if an $S_3$-invariant cut $C$ indeed exists, then necessarily $\theta(C) = \left(\frac{k_1c^2}{3}, \frac{k_1c^2}{3}, \frac{k_1c^2}{3}\right)$, so we first show that $\left(\frac{k_1c^2}{3}, \frac{k_1c^2}{3}, \frac{k_1c^2}{3}\right)$ is indeed a type of some cut of $Q_N$. By \Cref{Theo: SL3 type (A) classification} we know that $\left(\frac{k_1c^2}{3}, \frac{k_1c^2}{3}, \frac{k_1c^2}{3}\right)$ is a type of a cut of $N$ if and only if 
\[
\left(\begin{matrix} \frac{k_1c^2}{3} & \frac{k_1c^2}{3} \end{matrix}\right)
\left(\begin{matrix} k_1c & k_2c \\ 0 & c\end{matrix}\right)=k_1c^2\left(\begin{matrix} \frac{1}{3} & \frac{1}{3} \end{matrix}\right)\left(\begin{matrix} k_1c & k_2c \\ 0 & c\end{matrix}\right)=k_1c^2\left(\begin{matrix}\frac{k_1c}{3} & \frac{c(k_2+1)}{3}\end{matrix}\right)\]
has coordinates divisible by $k_1c^2$, in other words, when $3\mid k_1c$ and $3\mid c(k_2+1)$. If $3\mid c$, we are done. Otherwise, given that $3\mid |N|=k_1c^2$, we conclude that $3\mid k_1$, hence the first divisibility condition holds. Recall from \Cref{pro: matrix shapes} that $k_1$ and $k_2$ satisfy extra conditions. For type (C) we get $3\mid k_1\mid k_2^2-k_2+1$, hence $3\mid k_2^2-k_2+1$, therefore, $3\mid k_2^2-k_2+1+3k_2=(k_2+1)^2$, which implies $3\mid k_2+1$, which is the second divisibility condition we need. The same can be concluded for type (D).
Now that we know that $\gamma = \left(\frac{|N|}{3},\frac{|N|}{3},\frac{|N|}{3}\right)$ is a type of some cut on $Q_N$, we show that the cut $C_\gamma$ from \Cref{Theo: Formula for cut} is $S_3$-invariant. To see this, simply unpack the definition of $\overline{\xi_\gamma}$. For $\gamma = \left(\frac{|N|}{3}, \frac{|N|}{3}, \frac{|N|}{3}\right)$, we have that $$|\Im(\overline{\xi_\gamma})| =  \frac{|N|}{\gcd\left(\frac{|N|}{3}, \frac{|N|}{3}, \frac{|N|}{3}\right)} = 3,$$ and the map $\overline{\xi_\gamma}$ is given by 
\[ \overline{\xi_{\gamma}}(x_1e_1+x_2e_2+L')=(x_1+x_2) + 3\mathbb{Z}. \]
Writing out an arbitrary point in $\mathbb{Z}^2/L$ in a (non-unique) way using all three vectors $e_i$, the expression becomes
\begin{align*}
       \overline{\xi_{\gamma}}((x_1e_1+x_2e_2+x_3e_3)+L')&=\overline{\xi_{\gamma}}(((x_1-x_3)e_1+(x_2-x_3)e_2)+L')\\&=(x_1+x_2-2x_3) + 3\mathbb{Z} =(x_1+x_2+x_3) + 3 \mathbb{Z}.
\end{align*}
Recall that an arrow $a \colon (x_1e_1 + x_2e_2 + x_3 e_3) + L \to (x_1e_1 + x_2e_2 + x_3 e_3 + e_i) + L $ is in $C_\gamma$ if and only if 
\[ \overline{\xi_\gamma}((x_1e_1 + x_2e_2 + x_3 e_3) + L') > \overline{\xi_\gamma}((x_1 e_1 + x_2 e_2 + x_3 e_3 + e_i) + L'),  \]
where the inequality is taken for the smallest nonnegative representatives of the residue classes in $\mathbb{Z}/3 \mathbb{Z}$. This is equivalent to asking whether 
\[ (x_1 + x_2 + x_3) \bmod 3 > (x_1 + x_2 + x_3 + 1) \bmod 3. \]
Clearly, this construction is invariant under every permutation of $e_1,e_2,e_3$.
\end{proof}

\begin{Lem} \label{lem: 3 doesn't divide}
Let $G = N \rtimes K$ be of type (C) or (D), and let $Q_G$ be the McKay quiver of $G$. Then $Q_G$ has a loop if $3  \nmid |N|$.  
\end{Lem}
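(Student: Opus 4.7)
The strategy is to locate two vertices $v, tv \in Q_N$ connected by an arrow and satisfying $K_v \cap K_{tv} = \{e\}$; by Demonet's description of $Q_G \simeq Q_N \ast K$ (\Cref{Theo: Demonet}), at each vertex $(v, \varphi)$ of $Q_N \ast K$ (indexed by $\varphi \in \Irr(K_v)$) the space of loops contains the summand
\[ \Hom_{K_v \cap K_{tv}}(\varphi, \varphi \otimes \rho_{v, tv}) = \Hom_\Bbbk(\varphi, \varphi \otimes \rho_{v, tv}) \neq 0, \]
where $\rho_{v, tv}$ is the nonzero arrow space from $v$ to $tv$ in $Q_N$. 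This is a mild generalization of \Cref{Cor: Full connected orbit implies loop}: rather than requiring $K_v$ to be trivial (so that the orbit has full size $|K|$), it suffices that the \emph{pair} stabilizer $K_v \cap K_{tv}$ be trivial, since then the relevant Hom reduces to a Hom over the trivial group, which is just the tensor product of nonzero vector spaces.

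The vertex $v$ is produced by a short index argument. In the basis $e_1, e_2$ the matrix of $t - 1$ is $\left(\begin{smallmatrix} -1 & -1 \\ 1 & -2 \end{smallmatrix}\right)$, with determinant $3$. Since $L$ is $t$-invariant by \Cref{Lem: T-action leaves L invariant}, the endomorphism $t - 1$ descends to $\mathbb{Z}^2 / L \simeq \hat{N}$, and its cokernel $\mathbb{Z}^2/((t-1)\mathbb{Z}^2 + L)$ is a common quotient of $\mathbb{Z}^2/(t-1)\mathbb{Z}^2 \simeq \mathbb{Z}/3$ and $\hat{N}$, hence its order divides $\gcd(3, |N|) = 1$ by hypothesis. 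Therefore $t - 1$ is bijective on $\mathbb{Z}^2/L$, and there is a unique $v \in \mathbb{Z}^2/L$ with $tv - v \equiv e_1 \pmod L$; this yields the desired arrow $v \to tv$ of type $1$ in $Q_N$. Because $K$ acts transitively on $\{e_1, e_2, e_3\}$ and preserves $L$, the inclusion $e_1 \in L$ would force all of $e_1, e_2, e_3 \in L$ and hence $N = \{1\}$; since $N$ is non-trivial this is excluded, so $e_1 \notin L$ and in particular $tv \not\equiv v \pmod L$.

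The stabilizer computation $K_v \cap K_{tv} = \{e\}$ splits by type. The inequality $tv \neq v$ immediately shows $t \notin K_v$. In type (C), $K = \langle t \rangle \simeq C_3$ has no non-trivial proper subgroup, so $K_v = \{e\}$. In type (D), $K \simeq S_3$, and the subgroups not containing $t$ are $\{e\}$ and the three order-two subgroups generated by transpositions. If $K_v = \langle \tau \rangle$, then $K_{tv} = t K_v t^{-1} = \langle t \tau t^{-1} \rangle$, and since conjugation by the $3$-cycle $t$ cyclically permutes the three transpositions of $S_3$, the subgroup $K_{tv}$ is generated by a transposition \emph{different} from $\tau$. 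Any two distinct order-two subgroups of $S_3$ intersect trivially, so $K_v \cap K_{tv} = \{e\}$ in either case, and $Q_G$ has a loop. The main subtlety is the type (D) step, which relies on the conjugation behaviour of transpositions under a 3-cycle; the determinant-$3$ index argument in step 2 is otherwise routine.
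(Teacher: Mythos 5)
Your proof is correct, but it takes a genuinely different route from the paper's. The paper exhibits the vertex explicitly as $x_1 = (-k-1)e_1 + ke_2 + L$ with $3k+1 \equiv_{|N|} 0$, checks by hand that $t(x_1) = x_1 + e_1$, and then verifies that the $K$-orbit of $x_1$ has \emph{full} size ($3$ in type (C), $6$ in type (D)) so that \Cref{Cor: Full connected orbit implies loop} applies verbatim; in type (D) this case analysis has an exceptional branch ($e_2 + L$ of order $2$, forcing $N \simeq C_2 \times C_2$ and $G \simeq A_4 \times C_2$) where the orbit is \emph{not} full and the loop must be found by a separate direct computation of the McKay quiver. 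You replace both steps: the vertex $v$ is produced abstractly from $\det(t-1) = 3$ being coprime to $|N|$ (your $v$ is in fact the paper's $x_1$, found by inverting $t-1$ on $\mathbb{Z}^2/L$ rather than writing down the inverse), and, more substantially, you weaken ``full orbit'' to ``trivial pair stabilizer $K_v \cap K_{tv}$'', which a one-line conjugation argument in $S_3$ establishes uniformly — distinct transposition subgroups intersect trivially, and $K_{tv} = tK_vt^{-1}$ cannot equal $K_v$ since no transposition centralizes a $3$-cycle. This absorbs the paper's exceptional case (there $K_v$ and $K_{tv}$ are two \emph{different} order-$2$ subgroups, so your criterion still fires) and shortens the type (D) analysis considerably. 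The one thing you should make explicit is the strengthening of \Cref{Cor: Full connected orbit implies loop}: it is correct, and follows immediately from Demonet's formula because the summand of the loop space indexed by (the orbit of) the pair $(v,tv)$ is a Hom over $\Bbbk K_{v,tv} = \Bbbk$ between nonzero spaces — note only that the transversal representative of that orbit need not be $(v,tv)$ itself, but both the triviality of the pair stabilizer and the nonvanishing of the arrow space $\rho_{u_1,u_2}$ are preserved under the $K$-action, and that the twist $\varphi^{g_{s_i \to u_2}}$ appearing in Demonet's formula is irrelevant over the trivial group. Two cosmetic points: in type (D) the $3$-cycle of $K = \langle i_1, i_2\rangle$ is $i_1i_2$ rather than $t$ itself, but they induce the same permutation of $\{e_1,e_2,e_3\}$ and hence the same action on $\mathbb{Z}^2/L$, so your use of $t$ is harmless; and your argument, unlike the paper's, never needs to identify the exceptional group $A_4 \times C_2$ at all.
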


\begin{proof}
    Recall that $Q_G = Q_N \ast K$. We fix $L \leq \mathbb{Z}^2$ such that $Q_N = \hat{Q}/L$. 
    Assume that $3 \nmid |N|$. This implies $\gcd(3,|N|)=1$, and therefore there exists an integer $k$ such that $3k+1 \equiv_{|N|} 0 $. Let $x_1 = (-k-1)e_1+ke_2 + L \in \mathbb{Z}^2/L$, and consider the following elements:
    \begin{align*}
        x_1&=(-k-1)e_1+ke_2 +L,\\
        x_2&=(-k-1)e_2+ke_3 +L=(-k-1)e_2+k(-e_1-e_2) +L \\&=-ke_1-(2k+1)e_2 + L =-ke_1+ke_2 + L =x_1+e_1 ,\\
        x_3&=(-k-1)e_3+ke_1 +L =(k+1)(e_1+e_2)+ke_1+L\\&=(2k+1)e_1+(k+1)e_2+L=-ke_1+(k+1)e_2+L=x_1+e_1+e_2,\\
        x_1'&=ke_1+(-k-1)e_2+L,\\
        x_2'&=ke_2+(-k-1)e_3+L,\\
        x_3'&=ke_3+(-k-1)e_1+L.
    \end{align*}
    In the case of type (C), the $T$-orbit of $x_1$ is $\{x_1,x_2,x_3\}$. In the case of type (D), the $S_3$-orbit of $x_1$ is $\{x_1,x_2,x_3,x_1',x_2',x_3'\}$. Since $x_2=x_1+e_1$, we can conclude that the quiver $Q_G$ has a loop for both types (C) and (D) if we can show that the $T$-orbit of $x_1$ has size $3$ in the case of type (C), and that the $S_3$-orbit of $x_1$ has size $6$ in the case of type (D). 
    
     In the case of type (C), the $T$-orbit of $x_1$ is of size $3$. Indeed, otherwise it must have size $1$, in which case $x_1=x_2=x_3$, which implies $e_1 +L=e_2+L=0$. This is a contradiction since $N$ contains the abelian group $A$ which we assumed was nontrivial.
    
    In the case of type (D) in order to show that the $S_3$-orbit of $x_1$ is indeed of size $6$, it suffices to show that it contains at least $4$ different elements. Since $S_3$ has a subgroup isomorphic to $T$, the same argument as for type (C) shows that $x_1$, $x_2$ and $x_3$ are pairwise different elements of $N$. Hence it is enough to show that $x_1'$ is different from $x_1$, $x_2$ and $x_3$. 
    
    If we assume $x_1'=x_1$, we obtain $0=x_1-x_1'=(-2k-1)e_1+(2k+1)e_2+L=ke_1-ke_2+L$. Hence $0=3(ke_1-ke_2)+L=3ke_1-3ke_2+L=-e_1+e_2+L$. Because of the $S_3$-action this is equivalent to $0=-e_3+e_1+L=2e_1+e_2+L$. Subtracting these two, we get $3e_1+L=0$. If $e_1+L=0$, we get $x_1=x_2$, which we proved is impossible. Hence $e_1+L$ is an element of order $3$ in $\mathbb{Z}^2/L \simeq N$, which is also impossible since $3\nmid |N|$. 
    
    If we assume $x'_1=x_2$, we get $0=x_2-x_1'=-2ke_1+(2k+1)e_2+L=(k+1)e_1-ke_2+L$, hence $0=3((k+1)e_1-ke_2)+L=(3k+3)e_1-3ke_2+L=2e_1+e_2+L$, which is impossible, as we proved in the case where we assumed $x_1'=x_1$. 
    
    Finally, if we assume $x_1'=x_3$, we get $0=x_1'-x_3=2ke_1+(-2k-2)e_2+L$, hence $0=3(2ke_1+(-2k-2)e_2)+L=6ke_1+(-6k-6)e_2+L=-2e_1-4e_2+L$. Because of the $S_3$-action, this is equivalent to $0=-2e_3-4e_1+L=-2e_1+2e_2+L$. Subtracting these two, we get $6e_2+L=0$. The possible orders of $e_2+L$ in $N$ are thus $1,2,3,6$. Order $1$ is impossible for the same reason as in the previous cases, $3$ and $6$ are also impossible since $3\nmid |N|$. The only possible case is therefore that $e_2+L$ is of order $2$. Because of the $S_3$-action, the same holds for $e_1+L$. Then $N \simeq C_2 \times C_2$, and $|G| = 4 \cdot 6 = 24$. The size of $G$ together with the existence of a faithful irreducible representation in $\SL_3(\Bbbk)$ is enough to determine $G$ uniquely as $G \simeq S_4$. This irreducible faithful representation in $\SL_3(\Bbbk)$ is furthermore unique. We compute the McKay quiver for this group directly in \Cref{Exp: Negative example} and see that both vertices for the two $3$-dimensional irreducible representations have a loop each. 
\end{proof}

Next, we show that the loop we constructed appears to its third power in the potential defining $\Bbbk Q_G/I$. 

\begin{Pro}\label{Pro: coefficient of loop}
    Let $G = N \rtimes K$ be of type (C) or (D) such that $3  \nmid |N|$. Let $Q_G$ be the McKay quiver of $G$, and let $l$ be the loop in $Q_G$ constructed in \Cref{lem: 3 doesn't divide}. Then $l^3$ is a summand in the potential $\omega$ defining $\Bbbk Q_G/I$. 
\end{Pro}

\begin{proof}
    By \cite{BSW}, the potential $\omega$ is a linear combination of cycles of length $3$ in $Q_G$. Following \cite{BSW}, the coefficient of the $3$-cycle $l^3$ is computed as follows. 
    Let $i$ be the vertex in $Q_G$ at which $l$ is supported, and let $\rho \colon G \to \operatorname{SL}_3(\Bbbk)$ be the defining representation of $G$. Without loss of generality, we assume that $\rho(N)$ consists of diagonal matrices. Then $i$ corresponds to an irreducible representation $\chi_i$ of $G$, and $l$ corresponds to a morphism of representations $\chi_i \to \chi_i \otimes \rho$. Applying $l$ three times, we obtain the following sequence 
    \[ \chi_i \xrightarrow[]{l} \chi_i \otimes \rho \xrightarrow[]{l \otimes \operatorname{id}_\rho}  \chi_i \otimes \rho^{\otimes 2} \xrightarrow[]{l \otimes \operatorname{id}_\rho \otimes \operatorname{id}_\rho}  \chi_i \otimes \rho^{\otimes 3} \xrightarrow[]{\operatorname{id}_{\chi_i} \otimes \alpha} \chi_i \otimes \bigwedge^3 \rho,  \]
    where $\alpha \colon \rho^{\otimes 3} \to \bigwedge^3 \rho$ is the antisymmetrizer. Since $\bigwedge^3 \rho$ is the trivial representation, the above sequence gives an automorphism of the irreducible representation $\chi_i$. Hence, this is a scalar multiple $c \operatorname{id}_{\chi_i}$ of the identity morphism, and the coefficient of $l^3$ in $\omega$ is this scalar $c$.  
    We now describe the representation $\chi_i$ explicitly. For convenience, we drop the subscript $i$ and write simply $\chi$. Recall that $Q_G = Q_N \ast K$. We fix $L \leq \mathbb{Z}^2$ such that $Q_N = \hat{Q}/L$. We proved in \Cref{lem: 3 doesn't divide} that there exists a vertex $x$ such that the $T$-orbit of $x$ is $\{x, x + e_1, x + e_1 + e_2\} $. We view these three vertices as linear representations of $N$, and assemble them into a $3$-dimensional representation of $N$. To do this, we denote by $\rho|_N$ the restriction of the defining representation to $N$, and decompose it into $3$ linear characters $\rho|_N = \lambda_1 \oplus \lambda_2 \oplus \lambda_3$. Explicitly, these pick out the three diagonal entries of the matrices in $\rho(N)$. Since the arrows corresponding to $e_i + L$ correspond to multiplication by $\lambda_i$, the vertex $x = a e_1 + b e_2 + L$ corresponds to the representation $\lambda_1^a \cdot \lambda_2^b$. We define first $\chi = ( \lambda_1^a \cdot \lambda_2^b) \oplus (\lambda_1^{a+1} \cdot \lambda_2^b) \oplus (\lambda_1^{a+1} \cdot \lambda_2^{b+1}) $ as a representation of $N$. This extends to a representation of $G$ by mapping $K$ to the usual permutation representation of $T$, respectively, $S_3$. 
    Next, we show that $\chi$ indeed is a vertex supporting a loop, and then compute the coefficient of the loop's third power. 

    For this, we denote by $\{b_1, b_2, b_3\}$ the ordered basis for $\rho$ such that $N$ is diagonalised and $K$ represented by the permutation matrices as before. Similarly, we denote an ordered basis for $\chi$ with the same properties by $\{ e_1, e_2, e_3\}$. Then the elementary tensors $\{e_i \otimes b_j \mid 1 \leq i, j \leq 3 \}$ form a basis for $\chi \otimes \rho$. Fix the vectors 
    \[ f_1 = e_3 \otimes b_3, \quad f_2 = e_1 \otimes b_1, \quad f_3 = e_2 \otimes b_2. \]
    We claim that $\chi \simeq \langle f_1, f_2, f_3 \rangle$. It is easy to check that $K$ acts on $\langle f_1, f_2, f_3 \rangle $ in the same permutation representation as for $\chi$ and $\rho$. To see that $N$ acts accordingly, we apply $n \in N$ to the vectors explicitly. We have 
    \begin{align*}
        n \cdot (f_1) &= \lambda_1(n)^{a+1} \cdot \lambda_2(n)^{b+1} e_3 \otimes \lambda_3(n) b_3 \\
        &=\lambda_1(n)^{a+1} \cdot \lambda_2^{b+1}(n) \cdot \lambda_1(n)^{-1} \cdot \lambda_2(n)^{-1} e_3 \otimes  b_3 \\
        &= \lambda_1(n)^{a} \cdot \lambda_2^{b}(n) e_3 \otimes b_3,
    \end{align*}
    and similarly we have 
    \begin{align*}
        n \cdot (f_2) &=  \lambda_1(n)^{a} \cdot \lambda_2(n)^{b} e_1 \otimes \lambda_1(n) b_1 \\
        &= \lambda_1(n)^{a+1} \cdot \lambda_2(n)^{b} e_1 \otimes b_1
    \end{align*}
    and 
    \begin{align*}
        n \cdot (f_3) &=  \lambda_1(n)^{a+1} \cdot \lambda_2(n)^{b} e_2 \otimes \lambda_2(n) b_2 \\
        &= \lambda_1(n)^{a+1} \cdot \lambda_2(n)^{b+1} e_2 \otimes b_2.
    \end{align*}
    Thus, we have that the map $e_i \mapsto f_i$ gives a non-trivial homomorphism $\chi \to \chi \otimes \rho$, and we choose this homomorphism for the loop $l$. Note that this is the same computation one performs to cover the special case from \Cref{lem: 3 doesn't divide} in \Cref{Exp: Negative example}. Now we compute the coefficient of $l^3$. With the bases established, this step is easy, since we have the following sequence
    \[  e_i \mapsto f_i = (e_{i-1} \otimes b_{i-1}) \mapsto (e_{i-2} \otimes  b_{i-2} \otimes b_{i-1}) \mapsto (e_{i-3} \otimes b_{i-3} \otimes b_{i-2} \otimes b_{i-1}), \]
    where the indices are taken from $\{1, 2, 3\} $ modulo $3$. The antisymmetrizer takes $(b_{i-3} \otimes b_{i-2} \otimes b_{i-1})$ to $1$, and hence the scalar of $l^3$ in $\omega$ is non-zero. 
\end{proof}

Combining the previous results, we arrive to the following theorem.

\begin{Theo}
\label{classification}
Let $G = N \rtimes K$ be of type (C) or (D). Then $R \ast G \simeq_M \Bbbk Q_G/I$ admits a $3$-preprojective cut if and only if $3 \mid |N|$.
\end{Theo}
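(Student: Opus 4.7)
The plan is to decompose the equivalence into its two implications and dispatch each one by invoking the appropriate lemma from the preceding subsection together with the general machinery of Section \ref{Sec: Preliminaries}.

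For the ``if'' direction, assume $3 \mid |N|$. By \Cref{lem: 3 divides}, the McKay quiver $Q_N$ carries a $K$-invariant higher preprojective cut $C$. This cut amounts to a $3$-preprojective grading on the basic algebra $\Bbbk Q_N / I_N$ Morita equivalent to $R \ast N$, and by $K$-invariance the induced action of $K$ on $\Bbbk Q_N/I_N$ (coming from the action of $K$ on $N$ through $G = N \rtimes K$) is by graded automorphisms. Passing across the Morita equivalence, the same can be said for the $K$-action on $R \ast N$. \Cref{Cor: Skewing by graded auts}, i.e.\ Le Meur's theorem, then guarantees that the induced grading on $(R \ast N) \ast K \simeq R \ast G$ is itself $3$-preprojective. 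Transporting this grading to the basic algebra $\Bbbk Q_G/I$ via the Morita equivalence $R \ast G \simeq_M \Bbbk Q_G/I$ yields the desired higher preprojective grading, which can in turn be taken to be a cut on $Q_G$ by the Koszul-cut correspondence quoted earlier from \cite{DramburgSandoy} (the $R$ is Koszul with respect to the standard grading, hence so is $R \ast G$).

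For the ``only if'' direction, assume $3 \nmid |N|$. Then \Cref{lem: 3 doesn't divide} tells us that the McKay quiver $Q_G$ has a loop. Since $G \leq \SL_3(\Bbbk)$ puts us in the setting $d = 2 \geq 2$, \Cref{Cor: loops prevent cuts} applies and shows that $\Bbbk Q_G/I$ admits no higher preprojective cut. By the same Morita invariance used above, this means that $R \ast G$ does not admit a $3$-preprojective cut either.

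The main conceptual work has already been absorbed into \Cref{lem: 3 divides} and \Cref{lem: 3 doesn't divide}, so no further obstacle remains; the only point to take some care with is the verification that the $K$-action on $\Bbbk Q_N/I_N$ really does preserve the constructed cut $C_\gamma$ with $\gamma = (|N|/3, |N|/3, |N|/3)$, which however is exactly the last computation in the proof of \Cref{lem: 3 divides}, where the value $\overline{\xi_\gamma}(x_1 e_1 + x_2 e_2 + x_3 e_3 + L')$ was shown to depend only on the symmetric function $x_1 + x_2 + x_3 \bmod 3$. Thus the theorem follows by simply combining the two lemmas with \Cref{Cor: Skewing by graded auts} and \Cref{Cor: loops prevent cuts}.
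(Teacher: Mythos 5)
Your proof is correct and follows essentially the same route as the paper: \Cref{lem: 3 divides} combined with \Cref{Cor: Skewing by graded auts} for the ``if'' direction, and \Cref{lem: 3 doesn't divide} combined with \Cref{Cor: loops prevent cuts} for the ``only if'' direction. The extra remarks on Morita transfer and the Koszul-cut correspondence are harmless elaborations of steps the paper leaves implicit.
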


\begin{proof}
    If $3 \mid |N|$, it follows from \Cref{lem: 3 divides} that $Q_N$ has a $K$-invariant cut. It follows from \Cref{Cor: Skewing by graded auts} that $Q_G = Q_N \ast K$ then also has a higher preprojective cut. 
    Conversely, if $3 \nmid |N|$, it follows from \Cref{lem: 3 doesn't divide} that $Q_G$ has a loop which by \Cref{Pro: coefficient of loop} appears to the power $3$ in the potential, so it follows from \Cref{Pro: loops in support} that there is no higher preprojective cut on $\Bbbk Q_G/I$. 
\end{proof}

We illustrate our results with the groups from \Cref{Exp: N groups in 3 ways}. Parts of the first example already appeared in \cite{DramburgGasanova}. In the following examples, many of the $2$-representation infinite algebras we obtain by cutting the McKay quiver are so-called \emph{levelled algebras}. We remind the reader that this is equivalent to saying that the cut quiver is the Hasse diagram of a \emph{ranked poset}. Furthermore, if the length of a longest path in the cut quiver is $n$, the levelled algebra is called $n$-levelled. We make use of the levelled structure when drawing the quivers, placing the vertices in the same level into a column and drawing arrows between columns. 

\begin{Exp} \label{Exp: Positive case for (C)}
    Let $A = \langle \frac{1}{3}(1,2,0) \rangle$. We saw in \Cref{Exp: N groups in 3 ways} that $G = \langle A, T \rangle$ of type (C) is $(C_3 \times C_3) \rtimes T$. Thus, we know from \Cref{lem: 3 divides} that there exists a $T$-invariant cut on the quiver $Q_N$, where $N = A^T \simeq C_3 \times C_3$. Let us describe the quiver and the cut, and from this the quiver for $G$ and the skewed cut. To draw the quiver $Q_N$, we follow the convention from \Cref{Sec: type (A)}, and draw a portion of the infinite quiver $\hat{Q}$ together with the images of the vertices under $q$. We denote the elements of $\hat{N}$ by $\{ (i,j) \mid 0 \leq i,j < 3\}$. The action of $T$ corresponds to rotation around the vertex $(0,0)$. We construct the $T$-invariant cut as in \Cref{Sec: type (A)} and draw the cut arrows in red.

\[ 
    \begin{tikzpicture}[-Stealth]
        \begin{scope}[rotate=-15.4,inner sep=1.5mm]
        \begin{scriptsize}

        \coordinate (O) at (0, 0, 0);
        \coordinate (X) at (1,0,-1);
        \coordinate (Y) at (0, -1, 1);
        \coordinate (Z) at (-1, 1, 0);

        \node (A0) at (O) {(0,0)};
        \node (A6) at ($1*(X)$) {(1,2)};
        \node (A3) at ($2 *(X)$) {(2,1)};
        \node (A5) at ($(X)+(Y)$) {(0,2)};
        \node (A4) at ($2*(X)+(Y)$) {(1,1)};
        \node (A1) at ($-1*(Y)$) {(1,0)};
        \node (A2) at ($(X)-1*(Y)$) {(2,2)};
        \draw[red] (A1)--(A0);
        \draw[red] (A1)--(A2);
        \draw[red] (A3)--(A2);
        \draw[red] (A3)--(A4);
        \draw[red] (A5)--(A4);
        \draw[red] (A5)--(A0);
        \draw (A0)--(A6);
        \draw (A6)--(A1);
        \draw (A2)--(A6);
        \draw (A6)--(A3);
        \draw (A4)--(A6);
        \draw (A6)--(A5);

        \node(B0) at ($-1*(Y)-2*(X)$) {(2,2)}; 
        \node (B6) at ($-1*(Y)-1*(X)$) {(0,1)};
        \node (B3) at ($-1*(Y)$) {(1,0)};
        \node (B5) at ($-1*(X)$) {(2,1)};
        \node (B4) at ($(O)$) {(0,0)};
        \node (B1) at ($-2*(Y)-2*(X)$) {(0,2)};
        \node (B2) at ($-2*(Y)-1*(X)$) {(1,1)};
        \draw[red] (B1)--(B0);
        \draw[red] (B1)--(B2);
        \draw[red] (B3)--(B2);
        \draw[red] (B3)--(B4);
        \draw[red] (B5)--(B4);
        \draw[red] (B5)--(B0);
        \draw (B0)--(B6);
        \draw (B6)--(B1);
        \draw (B2)--(B6);
        \draw (B6)--(B3);
        \draw (B4)--(B6);
        \draw (B6)--(B5);

        \node (C0) at ($-1*(X)+1*(Y)$) {(1,1)};
        \node (C6) at ($1*(Y)$) {(2,0)};
        \node (C3) at ($1*(X)+1*(Y)$) {(0,2)};
        \node (C5) at ($2*(Y)$) {(1,0)};
        \node (C4) at ($1*(X)+2*(Y)$) {(2,2)};
        \node (C1) at ($-1*(X)$) {(2,1)};
        \node (C2) at ($(O)$) {(0,0)};
        \draw[red] (C1)--(C0);
        \draw[red] (C1)--(C2);
        \draw[red] (C3)--(C2);
        \draw[red] (C3)--(C4);
        \draw[red] (C5)--(C4);
        \draw[red] (C5)--(C0);
        \draw (C0)--(C6);
        \draw (C6)--(C1);
        \draw (C2)--(C6);
        \draw (C6)--(C3);
        \draw (C4)--(C6);
        \draw (C6)--(C5);
        
     \end{scriptsize}
        \end{scope}
 \end{tikzpicture}
    \] 
    Drawing and rearranging the quiver for the resulting $2$-representation infinite algebra $\Lambda_2$ produces the following $2$-levelled quiver. 
    \[ 
    \begin{tikzcd}
{(0,0)} \arrow[rr] \arrow[rrd] \arrow[rrdd] &  & {(1,2)} \arrow[rr] \arrow[rrd] \arrow[rrdd] &  & {(2,1)} \\
{(1,1)} \arrow[rru] \arrow[rr] \arrow[rrd]  &  & {(2,0)} \arrow[rru] \arrow[rr] \arrow[rrd]  &  & {(0,2)} \\
{(2,2)} \arrow[rruu] \arrow[rru] \arrow[rr] &  & {(0,1)} \arrow[rruu] \arrow[rru] \arrow[rr] &  & {(1,0)}
\end{tikzcd}
    \]
    Now we consider the action of $T$ on $Q_N$ to produce the corresponding quiver and cut for $G$. One can check, using the computer or \Cref{Theo: Demonet}, that the McKay quiver for $G$ is the one given below. To understand its structure, note that the orbits of the $T$-action on $Q_N$ are easy to read off. The vertices $(0,0)$, $(1,1)$ and $(2,2)$ are stabilised, and there are $2$ orbits of size $3$, given by $o_1 = \{(0,2), (2,1), (1,0)  \}$ and $o_2 = \{ (1,2), (2,0), (0,1) \}$. Recall that by \Cref{Def: skewed} every $G$-orbit of $Q_0$ with a representative $e$ gives rise to  $|\Irr(\Stab_G(e))|$ elements in $(Q\ast G)_0$. The non-trivial orbits therefore give rise to a single vertex each, while the stabilised vertices split into $3$ vertices each, denoted by $x_i$, $y_i$ and $z_i$. The vertices $x_i$, $y_i$ and $z_i$ are drawn twice for convenience and identified, and we draw the skewed cut in red.   
    \[ 
    \begin{tikzcd}[row sep= tiny]
x_1 \arrow[rrddddd] &  &                                                                    &  &                                                                                                                                  &  & x_1 \\
x_2 \arrow[rrdddd]  &  &                                                                    &  &                                                                                                                                  &  & x_2 \\
x_3 \arrow[rrddd]   &  &                                                                    &  &                                                                                                                                  &  & x_3 \\
                    &  &                                                                    &  &                                                                                                                                  &  &     \\
y_1 \arrow[rrd]     &  &                                                                    &  &                                                                                                                                  &  & y_1 \\
y_2 \arrow[rr]      &  & o_2  \arrow[rr, shift left=2] \arrow[rr, shift right=2] \arrow[rr] &  & o_1 \arrow[color = red,rruuuuu] \arrow[color = red,rruuuu] \arrow[color = red,rruuu] \arrow[color = red,rru] \arrow[color = red,rr] \arrow[color = red,rrd] \arrow[color = red,rrddd] \arrow[color = red,rrdddd] \arrow[color = red,rrddddd] &  & y_2 \\
y_3 \arrow[rru]     &  &                                                                    &  &                                                                                                                                  &  & y_3 \\
                    &  &                                                                    &  &                                                                                                                                  &  &     \\
z_1 \arrow[rruuu]   &  &                                                                    &  &                                                                                                                                  &  & z_1 \\
z_2 \arrow[rruuuu]  &  &                                                                    &  &                                                                                                                                  &  & z_2 \\
z_3 \arrow[rruuuuu] &  &                                                                    &  &                                                                                                                                  &  & z_3
\end{tikzcd}
    \]
    The resulting $2$-representation infinite algebra $\Lambda_3$ is again $2$-levelled. It can also be obtained directly by taking the quiver for the previous $2$-representation infinite algebra and noting that $T$ stabilises the points in the left column while permuting the points in the middle column as well as the right column. 
    
    For completeness, we also point out that the group $N$ contains $N' = \langle \frac{1}{3}(1,1,1) \rangle $. The skew-group algebra $R \ast N'$ is the $3$-preprojective alegbra of the $2$-Beilinson algebra $\Lambda_1$. More precisely, the McKay quiver for $N'$ with a cut is given by the following.   
    \[\begin{tikzcd}
                                                                & 1 \arrow[rd, shift left=2] \arrow[rd, shift right=2] \arrow[rd] &                                                                 \\
0 \arrow[ru, shift left=2] \arrow[ru, shift right=2] \arrow[ru] &                                                                 & 2 \arrow[color = red,ll, shift left=2] \arrow[color = red,ll, shift right=2] \arrow[color = red,ll]
\end{tikzcd} \]
    Passing from this quiver to the one for $N$ can also be done by skewing by another $C_3$-action which stabilises the vertices and permutes the arrows between pairs of vertices. Thus, we have a chain of three $2$-representation infinite algebras obtained by iterated skewing
    \[ (\Lambda_1 \ast C_3) \ast T \simeq \Lambda_2 \ast T \simeq \Lambda_3.  \]

    Next, we add the matrix $r = \left( \begin{smallmatrix}
        0& -1 & 0 \\-1 & 0 & 0 \\ 0 & 0 & -1
    \end{smallmatrix} \right)$ to generate a group $G_2 = \langle A, T, r \rangle \supseteq G$ of type (D). Since $r^2 = I_3$ is the identity, it is easy to see that $G_2 \simeq N \rtimes S_3 \simeq (C_3 \times C_3) \rtimes S_3$. We draw the skewed quiver and cut below, which again can be computed directly from $G_2$ or using \Cref{Theo: Demonet}. To understand the structure of the quiver $Q_{G_2}$, we first note that the same $T$-orbits on $Q_N$ arise as before. The extra action of $r$ then exchanges $(1,1)$ and $(2,2)$, while stabilising the remaining orbits. Thus, we now obtain $3 = |\operatorname{Irr}(S_3)|$ vertices for the stabilised vertex $(0,0)$, and $2$ vertices for the two orbits of size $3$, and $3$ vertices for the orbit $o_3 = \{ (1,1), (2,2) \}$. 
    \[
    \begin{tikzcd}[row sep= tiny]
1 \arrow[rrddd]            &  &                                                               &  &                                                                  &  & 1 \\
                           &  &                                                               &  &                                                                  &  &   \\
3 \arrow[rrd] \arrow[rrdd] &  &                                                               &  &                                                                  &  & 3 \\
4 \arrow[rr] \arrow[rrd]   &  & 7 \arrow[rrd, shift left] \arrow[rrd, shift right] \arrow[rr] &  & 10 \arrow[color = red,rruuu] \arrow[color = red,rru] \arrow[color = red,rr] \arrow[color = red,rrd] \arrow[color = red,rrdd] &  & 4 \\
5 \arrow[rru] \arrow[rr]   &  & 8 \arrow[rr] \arrow[rru, shift left] \arrow[rru, shift right] &  & 9 \arrow[color = red,rruu] \arrow[color = red,rru] \arrow[color = red,rr] \arrow[color = red,rrd] \arrow[color = red,rrddd]  &  & 5 \\
6 \arrow[rruu] \arrow[rru] &  &                                                               &  &                                                                  &  & 6 \\
                           &  &                                                               &  &                                                                  &  &   \\
2 \arrow[rruuu]            &  &                                                               &  &                                                                  &  & 2
\end{tikzcd}
    \]
    Since $S_3 \simeq C_3 \rtimes C_2$ is solvable, we can also obtain this quiver from $Q_G$ by acting with a group of order $2$ appropriately. More precisely, the group of order two stabilises the vertices $o_1$ and $o_2$ in $Q_G$. The only other stabilised vertex is the one corresponding to the trivial representation of $G$, which in the quiver $Q_{G_2}$ gives rise to the vertices we labeled $1$ and $2$. This also makes it clear that the vertices labelled $1$ and $2$ correspond to the two $1$-dimensional representations of $G_2$, while vertices $3,4,5,6$ correspond to the $2$-dimensional representations of $G_2$, and the vertices $7,8,9,10$ correspond to the $3$-dimensional representations of $G_2$. The action of $C_2$ on arrows involves some scaling, so we omit the details. We have obtained a $2$-representation infinite algebra $\Lambda_4$ which fits into the chain of iterated skews. 
    \[ ((\Lambda_1 \ast C_3) \ast T)\ast C_2 \simeq (\Lambda_2 \ast T) \ast C_2 \simeq \Lambda_3 \ast C_2 \simeq \Lambda_4.  \]
\end{Exp}

While the above computation can reasonably be done by hand, we now continue with a larger example that was aided by computations in GAP \cite{GAP4}. In particular, when we simply label the vertices of a McKay quiver by consecutive integers, these are the labels given to the irreducible characters by GAP. 

\begin{Exp}
    We continue the example above, but create a different group of type (D). Let $A = \langle \frac{1}{3}(1,2,0) \rangle$, and recall from \Cref{Exp: N groups in 3 ways} that $G = \langle A, T, r \rangle$ for $ r = \left( \begin{smallmatrix}
        0 & -1 & 0 \\ 1 & 0 & 0 \\ 0 & 0 & 1
    \end{smallmatrix} \right) $ is a group of type (D). The group of diagonal matrices $N \trianglelefteq G$ is now larger than just $C_3 \times C_3$. Indeed, it is $N = \langle \frac{1}{6}(5, 1, 0) , \frac{1}{6}( 1, 0, 5)  \rangle  \simeq C_6 \times C_6$. As before, using \Cref{lem: 3 divides}, we see that the quiver $Q_N$ for $N$ has a cut that is invariant under the action of $S_3$, which we draw below. The action of $S_3$ corresponds to rotation around the vertex $(0,0)$ and reflection along the axes given by the three outgoing arrows of the vertex $(0,0)$.  
\[ 
    \begin{tikzpicture}[-Stealth]
        \begin{scope}[rotate=-15.8,inner sep=1.5mm]
        \begin{scriptsize}

        \coordinate (O) at (0, 0, 0);
        \coordinate (X) at (1,0,-1);
        \coordinate (Y) at (0, -1, 1);
        \coordinate (Z) at (-1, 1, 0);

        \node (A0) at (O) {(0,0)};
        \node (A6) at ($1*(X)$) {(5,1)};
        \node (A3) at ($2 *(X)$) {(4,2)};
        \node (A5) at ($(X)+(Y)$) {(0,1)};
        \node (A4) at ($2*(X)+(Y)$) {(5,2)};
        \node (A1) at ($-1*(Y)$) {(5,0)};
        \node (A2) at ($(X)-1*(Y)$) {(4,1)};
        \draw[red] (A1)--(A0);
        \draw[red] (A1)--(A2);
        \draw[red] (A3)--(A2);
        \draw[red] (A3)--(A4);
        \draw[red] (A5)--(A4);
        \draw[red] (A5)--(A0);
        \draw (A0)--(A6);
        \draw (A6)--(A1);
        \draw (A2)--(A6);
        \draw (A6)--(A3);
        \draw (A4)--(A6);
        \draw (A6)--(A5);

        \node(B0) at ($-1*(Y)-2*(X)$) {(1,4)}; 
        \node (B6) at ($-1*(Y)-1*(X)$) {(0,5)};
        \node (B3) at ($-1*(Y)$) {(5,0)};
        \node (B5) at ($-1*(X)$) {(1,5)};
        \node (B4) at ($(O)$) {(0,0)};
        \node (B1) at ($-2*(Y)-2*(X)$) {(0,4)};
        \node (B2) at ($-2*(Y)-1*(X)$) {(5,5)};
        \draw[red] (B1)--(B0);
        \draw[red] (B1)--(B2);
        \draw[red] (B3)--(B2);
        \draw[red] (B3)--(B4);
        \draw[red] (B5)--(B4);
        \draw[red] (B5)--(B0);
        \draw (B0)--(B6);
        \draw (B6)--(B1);
        \draw (B2)--(B6);
        \draw (B6)--(B3);
        \draw (B4)--(B6);
        \draw (B6)--(B5);

        \node (C0) at ($-1*(X)+1*(Y)$) {(2,5)};
        \node (C6) at ($1*(Y)$) {(1,0)};
        \node (C3) at ($1*(X)+1*(Y)$) {(0,1)};
        \node (C5) at ($2*(Y)$) {(2,0)};
        \node (C4) at ($1*(X)+2*(Y)$) {(1,1)};
        \node (C1) at ($-1*(X)$) {(1,5)};
        \node (C2) at ($(O)$) {(0,0)};
        \draw[red] (C1)--(C0);
        \draw[red] (C1)--(C2);
        \draw[red] (C3)--(C2);
        \draw[red] (C3)--(C4);
        \draw[red] (C5)--(C4);
        \draw[red] (C5)--(C0);
        \draw (C0)--(C6);
        \draw (C6)--(C1);
        \draw (C2)--(C6);
        \draw (C6)--(C3);
        \draw (C4)--(C6);
        \draw (C6)--(C5);

        \node (D0) at ($-3*(X)$) {(3,3)};
        \node (D6) at ($-2*(X)$) {(2,4)};
        \node (D3) at ($-1*(X)$) {(1,5)};
        \node (D5) at ($-2*(X)+1*(Y)$) {(3,4)};
        \node (D4) at ($-1*(X)+1*(Y)$) {(2,5)};
        \node (D1) at ($-3*(X)-(Y)$) {(2,3)};
        \node (D2) at ($-2*(X)-(Y)$) {(1,4)};
        \draw[red] (D1)--(D0);
        \draw[red] (D1)--(D2);
        \draw[red] (D3)--(D2);
        \draw[red] (D3)--(D4);
        \draw[red] (D5)--(D4);
        \draw[red] (D5)--(D0);
        \draw (D0)--(D6);
        \draw (D6)--(D1);
        \draw (D2)--(D6);
        \draw (D6)--(D3);
        \draw (D4)--(D6);
        \draw (D6)--(D5);

        \node(E0) at ($2*(Y)+1*(X)$) {(1,1)}; 
        \node (E6) at ($2*(Y)+2*(X)$) {(0,2)};
        \node (E3) at ($3*(X)+2*(Y)$) {(5,3)};
        \node (E5) at ($2*(X)+3*(Y)$) {(1,2)};
        \node (E4) at ($3*(X)+3*(Y)$) {(0,3)};
        \node (E1) at ($(Y)+(X)$) {(0,1)};
        \node (E2) at ($(Y)+2*(X)$) {(5,2)};
        \draw[red] (E1)--(E0);
        \draw[red] (E1)--(E2);
        \draw[red] (E3)--(E2);
        \draw[red] (E3)--(E4);
        \draw[red] (E5)--(E4);
        \draw[red] (E5)--(E0);
        \draw (E0)--(E6);
        \draw (E6)--(E1);
        \draw (E2)--(E6);
        \draw (E6)--(E3);
        \draw (E4)--(E6);
        \draw (E6)--(E5);

        \node (F0) at ($-1*(X)-2*(Y)$) {(5,5)};
        \node (F6) at ($-2*(Y)$) {(4,0)};
        \node (F3) at ($1*(X)-2*(Y)$) {(3,1)};
        \node (F5) at ($-1*(Y)$) {(5,0)};
        \node (F4) at ($1*(X)-1*(Y)$) {(4,1)};
        \node (F1) at ($-1*(X)-3*(Y)$) {(4,5)};
        \node (F2) at ($-3*(Y)$) {(3,0)};
        \draw[red] (F1)--(F0);
        \draw[red] (F1)--(F2);
        \draw[red] (F3)--(F2);
        \draw[red] (F3)--(F4);
        \draw[red] (F5)--(F4);
        \draw[red] (F5)--(F0);
        \draw (F0)--(F6);
        \draw (F6)--(F1);
        \draw (F2)--(F6);
        \draw (F6)--(F3);
        \draw (F4)--(F6);
        \draw (F6)--(F5);

        \node (G0) at ($-3*(X)-3*(Y)$) {(0,3)};
        \node (G6) at ($-2*(X)-3*(Y)$) {(5,4)};
        \node (G3) at ($-1*(X)-3*(Y)$) {(4,5)};
        \node (G5) at ($-2*(X)-2*(Y)$) {(0,4)};
        \node (G4) at ($-1*(X)-2*(Y)$) {(5,5)};
        \node (G1) at ($-4*(Y)-3*(X)$) {(5,3)};
        \node (G2) at ($-2*(X)-4*(Y)$) {(4,4)};
        \draw[red] (G1)--(G0);
        \draw[red] (G1)--(G2);
        \draw[red] (G3)--(G2);
        \draw[red] (G3)--(G4);
        \draw[red] (G5)--(G4);
        \draw[red] (G5)--(G0);
        \draw (G0)--(G6);
        \draw (G6)--(G1);
        \draw (G2)--(G6);
        \draw (G6)--(G3);
        \draw (G4)--(G6);
        \draw (G6)--(G5);

        \node (H0) at ($3*(Y)$) {(3,0)};
        \node (H6) at ($1*(X)+3*(Y)$) {(2,1)};
        \node (H3) at ($2*(X)+3*(Y)$) {(1,2)};
        \node (H5) at ($(X)+4*(Y)$) {(3,1)};
        \node (H4) at ($2*(X)+4*(Y)$) {(2,2)};
        \node (H1) at ($2*(Y)$) {(2,0)};
        \node (H2) at ($(X)+2*(Y)$) {(1,1)};
        \draw[red] (H1)--(H0);
        \draw[red] (H1)--(H2);
        \draw[red] (H3)--(H2);
        \draw[red] (H3)--(H4);
        \draw[red] (H5)--(H4);
        \draw[red] (H5)--(H0);
        \draw (H0)--(H6);
        \draw (H6)--(H1);
        \draw (H2)--(H6);
        \draw (H6)--(H3);
        \draw (H4)--(H6);
        \draw (H6)--(H5);

         \node(I0) at ($-1*(Y)+1*(X)$) {(4,1)}; 
        \node (I6) at ($-1*(Y)+2*(X)$) {(3,2)};
        \node (I3) at ($-1*(Y)+3*(X)$) {(2,3)};
        \node (I5) at ($2*(X)$) {(4,2)};
        \node (I4) at ($3*(X)$) {(3,3)};
        \node (I1) at ($-2*(Y)+1*(X)$) {(3,1)};
        \node (I2) at ($-2*(Y)+2*(X)$) {(2,2)};
        \draw[red] (I1)--(I0);
        \draw[red] (I1)--(I2);
        \draw[red] (I3)--(I2);
        \draw[red] (I3)--(I4);
        \draw[red] (I5)--(I4);
        \draw[red] (I5)--(I0);
        \draw (I0)--(I6);
        \draw (I6)--(I1);
        \draw (I2)--(I6);
        \draw (I6)--(I3);
        \draw (I4)--(I6);
        \draw (I6)--(I5);

         \node(J0) at ($2*(Y)-2*(X)$) {(4,4)}; 
        \node (J6) at ($2*(Y)-1*(X)$) {(3,5)};
        \node (J3) at ($2*(Y)$) {(2,0)};
        \node (J5) at ($-1*(X)+3*(Y)$) {(4,5)};
        \node (J4) at ($3*(Y)$) {(3,0)};
        \node (J1) at ($1*(Y)-2*(X)$) {(3,4)};
        \node (J2) at ($1*(Y)-1*(X)$) {(2,5)};
        \draw[red] (J1)--(J0);
        \draw[red] (J1)--(J2);
        \draw[red] (J3)--(J2);
        \draw[red] (J3)--(J4);
        \draw[red] (J5)--(J4);
        \draw[red] (J5)--(J0);
        \draw (J0)--(J6);
        \draw (J6)--(J1);
        \draw (J2)--(J6);
        \draw (J6)--(J3);
        \draw (J4)--(J6);
        \draw (J6)--(J5);

         \node (K0) at ($2*(X)+1*(Y)$) {(5,2)};
        \node (K6) at ($3*(X)+1*(Y)$) {(4,3)};
        \node (K3) at ($4*(X)+1*(Y)$) {(3,4)};
        \node (K5) at ($3*(X)+2*(Y)$) {(5,3)};
        \node (K4) at ($4*(X)+2*(Y)$) {(4,4)};
        \node (K1) at ($2*(X)$) {(4,2)};
        \node (K2) at ($3*(X)$) {(3,3)};
        \draw[red] (K1)--(K0);
        \draw[red] (K1)--(K2);
        \draw[red] (K3)--(K2);
        \draw[red] (K3)--(K4);
        \draw[red] (K5)--(K4);
        \draw[red] (K5)--(K0);
        \draw (K0)--(K6);
        \draw (K6)--(K1);
        \draw (K2)--(K6);
        \draw (K6)--(K3);
        \draw (K4)--(K6);
        \draw (K6)--(K5);

        \node (L0) at ($-4*(X)-2*(Y)$) {(2,2)};
        \node (L6) at ($-2*(Y)-3*(X)$) {(1,3)};
        \node (L3) at ($-2*(X)-2*(Y)$) {(0,4)};
        \node (L5) at ($-1*(Y)-3*(X)$) {(2,3)};
        \node (L4) at ($-2*(X)-1*(Y)$) {(1,4)};
        \node (L1) at ($-4*(X)-3*(Y)$) {(1,2)};
        \node (L2) at ($-3*(X)-3*(Y)$) {(0,3)};
        \draw[red] (L1)--(L0);
        \draw[red] (L1)--(L2);
        \draw[red] (L3)--(L2);
        \draw[red] (L3)--(L4);
        \draw[red] (L5)--(L4);
        \draw[red] (L5)--(L0);
        \draw (L0)--(L6);
        \draw (L6)--(L1);
        \draw (L2)--(L6);
        \draw (L6)--(L3);
        \draw (L4)--(L6);
        \draw (L6)--(L5);

     \end{scriptsize}
        \end{scope}
 \end{tikzpicture}
    \] 
  We first draw the quiver for the subgroup $H \leq G$ of type (C), which we compute using GAP. Several vertices have been drawn repeatedly to make the structure of the quiver more apparent, but of course vertices with the same label are identified. Furthermore, we identify arrows when at least one of the involved vertices has been drawn multiple times. The skewed cut is drawn in red. 
  \[\begin{tikzcd}[row sep= tiny, /tikz/commutative diagrams/arrow style= tikz]
14 \arrow[rrd] \arrow[rr]                                    &  & 12 \arrow[rrd]                                                            &  & 11 \arrow[color = red,rr] &  & 14                                                \\
\begin{smallmatrix} 1 \\ 2 \\ 3 \end{smallmatrix} \arrow[rr, start anchor = {[yshift=1.5ex]}] \arrow[rr] \arrow[rr, start anchor = {[yshift=-1.5ex]}] &  & 18 \arrow[rr, shift left] \arrow[rr, shift right] \arrow[rrd] \arrow[rru] &  & 17 \arrow[color = red, rru] \arrow[color = red, rrd]  \arrow[color = red,rr, end anchor = {[yshift=1.5ex]}] \arrow[color = red,rr] \arrow[color = red,rr, end anchor = {[yshift=-1.5ex]}] &  & \begin{smallmatrix} 1 \\ 2 \\ 3 \end{smallmatrix} \\
13 \arrow[rrd] \arrow[rru] \arrow[rr]                        &  & 12 \arrow[rrd] \arrow[rru]                                                &  & 11 \arrow[color = red,rr] &  & 13                                                \\
\begin{smallmatrix} 4 \\ 6 \\ 8 \end{smallmatrix} \arrow[rr, start anchor = {[yshift=1.5ex]}] \arrow[rr] \arrow[rr, start anchor = {[yshift=-1.5ex]}] &  & 16 \arrow[rr, shift left] \arrow[rr, shift right] \arrow[rru] \arrow[rrd] &  & 19 \arrow[color = red, rru] \arrow[color = red, rrd] \arrow[color = red,rr, end anchor = {[yshift=1.5ex]}] \arrow[color = red,rr] \arrow[color = red,rr, end anchor = {[yshift=-1.5ex]}] &  & \begin{smallmatrix} 4 \\ 6 \\ 8 \end{smallmatrix} \\
10 \arrow[rrd] \arrow[rru] \arrow[rr]                        &  & 12 \arrow[rrd] \arrow[rru]                                                &  & 11 \arrow[color = red,rr] &  & 10                                                \\
\begin{smallmatrix} 5 \\ 7 \\ 9 \end{smallmatrix} \arrow[rr, start anchor = {[yshift=1.5ex]}] \arrow[rr] \arrow[rr, start anchor = {[yshift=-1.5ex]}] &  & 20 \arrow[rr, shift left] \arrow[rr, shift right] \arrow[rru] \arrow[rrd] &  & 15 \arrow[color = red, rru] \arrow[color = red, rrd] \arrow[color = red,rr, end anchor = {[yshift=1.5ex]}] \arrow[color = red,rr] \arrow[color = red,rr, end anchor = {[yshift=-1.5ex]}] &  & \begin{smallmatrix} 5 \\ 7 \\ 9 \end{smallmatrix} \\
14 \arrow[rru] \arrow[rr]                                    &  & 12 \arrow[rru]                                                            &  & 11 \arrow[color = red,rr] &  & 14                                               
\end{tikzcd}
  \]
  To compute the quiver for the group of type (D), we now need to skew by the action of the group of order $2$. To compute this action, we make use of GAP by first finding a complement for $H$ in $G$, letting it act by conjugation on $H$ and then computing the induced action on the irreducible characters of $H$. This action permutes the vertices by 
  \[ \sigma = (2,3)(4,5)(6,7)(8,9)(13,14)(15,19)(16,20).\]
  The resulting quiver is drawn below. Again, some vertices are drawn repeatedly for ease of reading, and the same identifications are made as before. The skewed cut is drawn in red.  
  \[\begin{tikzcd}[row sep= tiny, /tikz/commutative diagrams/arrow style= tikz]
                                                                             & 3 \arrow[rrd]                                                   &  &                                                                             &  & 15 \arrow[color = red,rr]                                                &  & 3  &                                                   \\
                                                                         & 2 \arrow[rr]                                                    &  & 14 \arrow[rru] \arrow[rr] \arrow[rrd]                                       &  & 9 \arrow[color = red,rrd] \arrow[color = red,rrdd]                                   &  & 2  &                                                   \\
                                                                         & 7 \arrow[rr] \arrow[rrd]                                        &  & 10 \arrow[rr] \arrow[rrd]                                                   &  & 13 \arrow[color = red,rruu] \arrow[color = red,rru] \arrow[color = red,rrd]                      &  & 7  &                                                   \\
\begin{smallmatrix} 4 \\ 5 \\ 6 \end{smallmatrix}  \arrow[rrr, bend left = 25, start anchor = {[yshift=0.5ex]}]  \arrow[rrr, bend left= 25, start anchor = {[yshift=-1ex]}] \arrow[rrr, bend left= 25, start anchor = {[yshift=-2.5ex]}] & 17 \arrow[rruu] \arrow[rru] \arrow[rr] \arrow[rrd] \arrow[rrdd] &  & 19 \arrow[rruu] \arrow[rrdd] \arrow[rr, shift left] \arrow[rr, shift right] &  & 18 \arrow[color = red,rru] \arrow[color = red,rr] \arrow[color = red,rrd] \arrow[color = red,rrr, bend left = 25, end anchor = {[yshift=0.5ex]}]  \arrow[color = red,rrr, bend left= 25, end anchor = {[yshift=-1ex]}] \arrow[color = red,rrr, bend left= 25, end anchor = {[yshift=-2.5ex]}] &  & 17 & \begin{smallmatrix} 4 \\ 5 \\ 6 \end{smallmatrix} \\
                                                                         & 8 \arrow[rru] \arrow[rr]                                        &  & 12 \arrow[rru] \arrow[rr]                                                   &  & 15 \arrow[color = red,rru] \arrow[color = red,rrd] \arrow[color = red,rrdd]                      &  & 8  &                                                   \\
                                                                         & 1 \arrow[rr]                                                    &  & 16 \arrow[rru] \arrow[rr] \arrow[rrd]                                       &  & 11 \arrow[color = red,rruu] \arrow[color = red,rru]                                  &  & 1  &                                                   \\
                                                                         & 3 \arrow[rru]                                                   &  &                                                                             &  & 13 \arrow[color = red,rr]                                                &  & 3  &                                                  
\end{tikzcd}
  \]
  The labels are again corresponding to the enumeration of the irreducible representations of $G$ as computed by GAP.  
\end{Exp}

Next, we give a negative example. 

\begin{Exp}\label{Exp: Negative example}
    To see a negative example, consider the group $A = \langle \frac{1}{2}(1,1,0) \rangle$ of order $2$. Taking $G = \langle A, T \rangle$ of type (C), we see immediately that the group $N \trianglelefteq G$ of diagonal matrices is $C_2 \times C_2$, whose order is not divisible by $3$. Let us draw the quiver $Q_N$ and then skew it by $T$ to see the loop appearing.  
    \[ 
    \begin{tikzpicture}[-Stealth]
        \begin{scope}[rotate=-15.4,inner sep=1.5mm]
        \begin{scriptsize}

        \coordinate (O) at (0, 0, 0);
        \coordinate (X) at (1,0,-1);
        \coordinate (Y) at (0, -1, 1);
        \coordinate (Z) at (-1, 1, 0);

        \node (A0) at (O) {(0,0)};
        \node (A1) at ($(X)$) {(1,1)};
        \node (A2) at ($(Y)+(Z)$) {(1,1)};
        \node (A3) at ($(Y)$) {(1,0)};
        \node (A4) at ($(X)+(Z)$) {(1,0)};
        \node (A5) at ($(Z)$) {(0,1)};
        \node (A6) at ($(X)+(Y)$) {(0,1)};
        
        \draw (A0)--(A1);
        \draw (A0)--(A3);
        \draw (A0)--(A5);
        \draw (A1)--(A6);
        \draw (A1)--(A4);
        \draw (A2)--(A0);
        \draw (A3)--(A2);
        \draw (A3)--(A6);
        \draw (A4)--(A0);
        \draw (A5)--(A4);
        \draw (A5)--(A2);
        \draw (A6)--(A0);
        
     \end{scriptsize}
        \end{scope}
 \end{tikzpicture}
    \] 
    Clearly the $T$-orbits are $\{ (0,0) \}$ and $o = \{ (1,0), (0,1), (1,1) \}$. The skewed quiver is given by $3$ vertices for the stabilised point $(0,0)$ and $1$ vertex $o$ for the non-trivial orbit. Furthermore, the orbit has $2$ loops.
    \[
    \begin{tikzcd}
                           & x_2 \arrow[d, shift right]                                             &                            \\
x_1 \arrow[r, shift right] & o  \arrow[loop, in = 225, out= 260, distance = 2em] \arrow[loop, out = 270, in= 305, distance = 2em] \arrow[l, shift right] \arrow[u, shift right] \arrow[r, shift right] & x_3 \arrow[l, shift right]
\end{tikzcd}
    \]
    To construct a negative example for a group of type (D), we consider the same group $A$, and add the matrix $r = \left( \begin{smallmatrix}
        0 & -1 & 0 \\ 1 & 0 & 0 \\ 0 & 0 & 1
    \end{smallmatrix} \right).$ This gives rise to $G = \langle A, T, r \rangle \simeq S_4$, which is the special case we treated separately in the proof of \Cref{lem: 3 doesn't divide}, so we draw the quiver here. The vertex $(0,0)$ is stabilised by all of $S_3$. Since $S_3$ has $3$ irreducible representations, the stabilised vertex splits into $3$ vertices. The orbit of order $3$ now has a stabiliser of size $2$ attached, so the orbit gives rise to $2$ vertices in the quiver, so we draw the vertices in two rows corresponding to the two orbits. 
    \[ 
    \begin{tikzcd}
1 \arrow[rd, shift right] &                                                                                                                       & 3 \arrow[ld, shift right] \arrow[rd, shift right] &                                                                                                                       & 2 \arrow[ld, shift right] \\
                          & 5 \arrow[lu, shift right] \arrow[ru, shift right] \arrow[rr, shift right] \arrow[loop below] &                                                   & 4 \arrow[lu, shift right] \arrow[ru, shift right] \arrow[ll, shift right] \arrow[loop below] &                          
\end{tikzcd}
    \]
\end{Exp}

As noted, many of the involved $2$-repre\-sen\-ta\-tion infinite algebras are levelled. We record some easy observations that may be useful for future investigation of this phenomenon, in particular with respect to the effect of $2$-Auslander-Platzeck-Reiten tilting.

\begin{Rem}
    The $2$-representation infinite algebras we obtain by cutting the McKay quiver are \emph{levelled algebras}, meaning that the cut quiver is the Hasse diagram of a \emph{ranked poset}. It is clear that the cut quiver for the diagonal subgroup $N$ is $2$-levelled, and that actions of groups which leave the cut invariant give rise to automorphisms of the $2$-levelled quiver. It is also not difficult to see that the resulting skewed quivers are again levelled. Furthermore, if the dimensions of the irreducible representations appearing in a cut McKay quiver $Q$ are known, and if the skewed quiver $Q \ast K$ is again a McKay quiver, we can deduce the possible dimensions of the irreducibles appearing in a level of $Q \ast K$. 
\end{Rem}

\subsection{Skewing and unskewing}\label{SSec: Skewing and unskewing}
In type (C), we can establish more than just the existence of cuts, using the following observation that goes back to the work of Reiten and Riedtmann \cite[Section 5.1]{ReitenRiedtmann}. 

\begin{Rem}
    Let $G = N \rtimes T$ be of type (C). Then $T$ acts on $Q_N$, coming from the action of $T$ on $\Bbbk N$ and on $R$. Note that $\hat{T} \simeq T$ acts on $\Bbbk N$ and on $R$ in the corresponding dual representations. It then follows that skewing again by $\hat{T}$ unskews the action of $T$, and the resulting algebra is Morita equivalent to $R \ast N$, i.e.\ we have
    \[ (R \ast G) \ast \hat{T} = (R \ast (N \rtimes T)) \ast \hat{T} \simeq  ((R \ast N) \ast T) \ast \hat{T} \simeq_M (R \ast N) \otimes_k (kT \otimes k\hat{T}) \simeq_M R\ast N. \]
    A more general version for solvable groups was proven in \cite[Proposition 5.3]{ReitenRiedtmann}.
\end{Rem}

\begin{Pro}
    Let $G = N \rtimes T$ of type (C). Then there is a bijection between the $T$-invariant cuts of $Q_N$ and the $\hat{T}$-invariant cuts of $Q_G$. The bijection is given by 
    \begin{align*}
        \{C \subseteq (Q_N)_1 \mid  C \text{ is } T\text{-invariant}  \} &\leftrightarrow \{C' \subseteq (Q_G)_1 \mid  C' \text{ is } \hat{T}\text{-invariant}  \} \\
        C &\mapsto C \ast T \\
        C' \ast \hat{T} &\mapsfrom C'.
          \end{align*} 
\end{Pro}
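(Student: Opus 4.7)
The plan is to package both directions as instances of \Cref{Cor: Skewing by graded auts}, and then to deduce that the compositions are the identity from the double-skewing identity $(R \ast G) \ast \hat{T} \simeq_M R \ast N$ noted in the preceding remark.

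For the forward direction, I would start with a $T$-invariant cut $C \subseteq (Q_N)_1$ and observe that $T$ then lies in $\operatorname{GrAut}(\Bbbk Q_N/I_N)$ with respect to the $\mathbb{Z}$-grading determined by $C$. Applying \Cref{Cor: Skewing by graded auts} produces a $3$-preprojective cut $C \ast T$ on $Q_N \ast T$, which by \Cref{Theo: Demonet} is Morita equivalent to $Q_G$. To see that $C \ast T$ is $\hat{T}$-invariant, I would unpack the action of $\hat{T}$ on $R \ast G = (R \ast N) \ast T$: it acts by characters on the $\Bbbk T$-factor and fixes $R \ast N$ pointwise. Since the $C \ast T$-grading is, by construction, inherited entirely from the $C$-grading on the $R \ast N$-factor, the $\hat{T}$-action preserves it.

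For the reverse direction, a $\hat{T}$-invariant cut $C'$ on $Q_G$ puts $\hat{T}$ inside $\operatorname{GrAut}(\Bbbk Q_G/I_G)$, so \Cref{Cor: Skewing by graded auts} yields a $3$-preprojective cut $C' \ast \hat{T}$ on $Q_G \ast \hat{T}$. The remark immediately above the proposition identifies this with a cut on $Q_N$ (up to the implicit Morita equivalence). Its $T$-invariance follows by a symmetric argument: the double-dual $\hat{\hat{T}} \simeq T$ acts on $(R \ast G) \ast \hat{T}$ by characters on the $\Bbbk \hat{T}$-factor, and under the Morita equivalence to $R \ast N$ this action recovers the original $T$-action, which therefore preserves the transported grading.

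To conclude that the two maps are mutually inverse, I would iterate: starting from $C$, the double-skewing produces $(C \ast T) \ast \hat{T}$ on $((R \ast N) \ast T) \ast \hat{T} \simeq_M R \ast N$, and the Cohen--Montgomery-type equivalence (via the natural $\Bbbk T \otimes \Bbbk \hat{T}$-bimodule structure from the remark) identifies the twice-skewed grading with the original $C$. The analogous computation handles $C' \mapsto (C' \ast \hat{T}) \ast T$. The main obstacle is the careful bookkeeping of which $\hat{T}$-action (respectively $T$-action) is used at each stage: one must verify that the $\hat{T}$ that witnesses invariance of $C \ast T$ in the forward direction is precisely the same $\hat{T}$ whose skewing in the reverse direction undoes the skewing by $T$, which is exactly the content of the Morita identification in the preceding remark.
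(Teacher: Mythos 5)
Your proposal is correct and follows essentially the same route as the paper: both directions are instances of \Cref{Cor: Skewing by graded auts}, and mutual inverseness comes from the double-skewing identity $(C \ast T) \ast \hat{T} = C$, which holds because $T$ and $\hat{T}$ act by graded automorphisms. The paper's proof is just a terser version of yours; your extra detail on why $C \ast T$ is $\hat{T}$-invariant (the dual action fixes the $R \ast N$-factor, where the grading lives) is the right justification for what the paper leaves implicit.
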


\begin{proof}
    Applying \Cref{Cor: Skewing by graded auts} twice, we see that $ C \mapsto (C \ast T)$ and $C' \mapsto C' \ast \hat{T}$ are well-defined. It suffices to check that $C = C \ast T \ast \hat{T}$, which is immediate since both $T$ and $\hat{T}$ act by graded automorphisms. 
\end{proof}

It is important to point out that both $Q_N$ and $Q_G$ also admit other higher preprojective cuts which are not invariant under the actions of $T$ and $\hat{T}$ respectively. In the following, we make use of ``cut mutation'', which corresponds to $2$-APR tilting, see for example \cite{DramburgGasanova} for the case when the group is abelian. The details of the procedure are not necessary, so the unfamiliar reader can simply take the cuts as given and note that turning a sink in a cut quiver into a source produces a new cut. 

\begin{Exp}
    We return to the group of type (C) from \Cref{Exp: Positive case for (C)}. This was $G \simeq (C_3 \times C_3) \rtimes T$. Note that $Q_N$ admits many cuts which are not $T$-invariant. We give such an example below by turning the vertex $(0,2)$ from a sink in the cut quiver into a source.  
\[ 
    \begin{tikzpicture}[-Stealth]
        \begin{scope}[rotate=-15.4,inner sep=1.5mm]
        \begin{scriptsize}

        \coordinate (O) at (0, 0, 0);
        \coordinate (X) at (1,0,-1);
        \coordinate (Y) at (0, -1, 1);
        \coordinate (Z) at (-1, 1, 0);

        \node (A0) at (O) {(0,0)};
        \node (A6) at ($1*(X)$) {(1,2)};
        \node (A3) at ($2 *(X)$) {(2,1)};
        \node (A5) at ($(X)+(Y)$) {(0,2)};
        \node (A4) at ($2*(X)+(Y)$) {(1,1)};
        \node (A1) at ($-1*(Y)$) {(1,0)};
        \node (A2) at ($(X)-1*(Y)$) {(2,2)};
        \draw[red] (A1)--(A0);
        \draw[red] (A1)--(A2);
        \draw[red] (A3)--(A2);
        \draw[red] (A3)--(A4);
        \draw (A5)--(A4);
        \draw (A5)--(A0);
        \draw (A0)--(A6);
        \draw (A6)--(A1);
        \draw (A2)--(A6);
        \draw (A6)--(A3);
        \draw (A4)--(A6);
        \draw[red] (A6)--(A5);

        \node(B0) at ($-1*(Y)-2*(X)$) {(2,2)}; 
        \node (B6) at ($-1*(Y)-1*(X)$) {(0,1)};
        \node (B3) at ($-1*(Y)$) {(1,0)};
        \node (B5) at ($-1*(X)$) {(2,1)};
        \node (B4) at ($(O)$) {(0,0)};
        \node (B1) at ($-2*(Y)-2*(X)$) {(0,2)};
        \node (B2) at ($-2*(Y)-1*(X)$) {(1,1)};
        \draw (B1)--(B0);
        \draw (B1)--(B2);
        \draw[red] (B3)--(B2);
        \draw[red] (B3)--(B4);
        \draw[red] (B5)--(B4);
        \draw[red] (B5)--(B0);
        \draw (B0)--(B6);
        \draw[red] (B6)--(B1);
        \draw (B2)--(B6);
        \draw (B6)--(B3);
        \draw (B4)--(B6);
        \draw (B6)--(B5);

        \node (C0) at ($-1*(X)+1*(Y)$) {(1,1)};
        \node (C6) at ($1*(Y)$) {(2,0)};
        \node (C3) at ($1*(X)+1*(Y)$) {(0,2)};
        \node (C5) at ($2*(Y)$) {(1,0)};
        \node (C4) at ($1*(X)+2*(Y)$) {(2,2)};
        \node (C1) at ($-1*(X)$) {(2,1)};
        \node (C2) at ($(O)$) {(0,0)};
        \draw[red] (C1)--(C0);
        \draw[red] (C1)--(C2);
        \draw (C3)--(C2);
        \draw (C3)--(C4);
        \draw[red] (C5)--(C4);
        \draw[red] (C5)--(C0);
        \draw (C0)--(C6);
        \draw (C6)--(C1);
        \draw (C2)--(C6);
        \draw[red] (C6)--(C3);
        \draw (C4)--(C6);
        \draw (C6)--(C5);
        
     \end{scriptsize}
        \end{scope}
 \end{tikzpicture}
    \] 
    Therefore, this cut does not give rise to a skewed cut for the quiver $Q_G$. However, if we perform the same operation at all vertices in orbit of $(0,2)$, we do obtain a new $T$-invariant cut. 
    \[ 
    \begin{tikzpicture}[-Stealth]
        \begin{scope}[rotate=-15.4,inner sep=1.5mm]
        \begin{scriptsize}

        \coordinate (O) at (0, 0, 0);
        \coordinate (X) at (1,0,-1);
        \coordinate (Y) at (0, -1, 1);
        \coordinate (Z) at (-1, 1, 0);

        \node (A0) at (O) {(0,0)};
        \node (A6) at ($1*(X)$) {(1,2)};
        \node (A3) at ($2 *(X)$) {(2,1)};
        \node (A5) at ($(X)+(Y)$) {(0,2)};
        \node (A4) at ($2*(X)+(Y)$) {(1,1)};
        \node (A1) at ($-1*(Y)$) {(1,0)};
        \node (A2) at ($(X)-1*(Y)$) {(2,2)};
        \draw (A1)--(A0);
        \draw (A1)--(A2);
        \draw (A3)--(A2);
        \draw (A3)--(A4);
        \draw (A5)--(A4);
        \draw (A5)--(A0);
        \draw (A0)--(A6);
        \draw[red] (A6)--(A1);
        \draw (A2)--(A6);
        \draw[red] (A6)--(A3);
        \draw (A4)--(A6);
        \draw[red] (A6)--(A5);

        \node(B0) at ($-1*(Y)-2*(X)$) {(2,2)}; 
        \node (B6) at ($-1*(Y)-1*(X)$) {(0,1)};
        \node (B3) at ($-1*(Y)$) {(1,0)};
        \node (B5) at ($-1*(X)$) {(2,1)};
        \node (B4) at ($(O)$) {(0,0)};
        \node (B1) at ($-2*(Y)-2*(X)$) {(0,2)};
        \node (B2) at ($-2*(Y)-1*(X)$) {(1,1)};
        \draw (B1)--(B0);
        \draw (B1)--(B2);
        \draw (B3)--(B2);
        \draw (B3)--(B4);
        \draw (B5)--(B4);
        \draw (B5)--(B0);
        \draw (B0)--(B6);
        \draw[red] (B6)--(B1);
        \draw (B2)--(B6);
        \draw[red] (B6)--(B3);
        \draw (B4)--(B6);
        \draw[red] (B6)--(B5);

        \node (C0) at ($-1*(X)+1*(Y)$) {(1,1)};
        \node (C6) at ($1*(Y)$) {(2,0)};
        \node (C3) at ($1*(X)+1*(Y)$) {(0,2)};
        \node (C5) at ($2*(Y)$) {(1,0)};
        \node (C4) at ($1*(X)+2*(Y)$) {(2,2)};
        \node (C1) at ($-1*(X)$) {(2,1)};
        \node (C2) at ($(O)$) {(0,0)};
        \draw (C1)--(C0);
        \draw (C1)--(C2);
        \draw (C3)--(C2);
        \draw (C3)--(C4);
        \draw (C5)--(C4);
        \draw (C5)--(C0);
        \draw (C0)--(C6);
        \draw[red] (C6)--(C1);
        \draw (C2)--(C6);
        \draw[red] (C6)--(C3);
        \draw (C4)--(C6);
        \draw[red] (C6)--(C5);
        
     \end{scriptsize}
        \end{scope}
 \end{tikzpicture}
    \] 
    and the skewed cut on $Q_G$ can be obtained by turning the vertex $o_1 =(0,2)^T $ from a sink into a source.
    \[ 
    \begin{tikzcd}[row sep= tiny]
x_1 \arrow[rrddddd] &  &                                                                    &  &                                                                                                                                  &  & x_1 \\
x_2 \arrow[rrdddd]  &  &                                                                    &  &                                                                                                                                  &  & x_2 \\
x_3 \arrow[rrddd]   &  &                                                                    &  &                                                                                                                                  &  & x_3 \\
                    &  &                                                                    &  &                                                                                                                                  &  &     \\
y_1 \arrow[rrd]     &  &                                                                    &  &                                                                                                                                  &  & y_1 \\
y_2 \arrow[rr]      &  & o_2  \arrow[color = red,rr, shift left=2] \arrow[color = red,rr, shift right=2] \arrow[color = red,rr] &  & o_1 \arrow[rruuuuu] \arrow[rruuuu] \arrow[rruuu] \arrow[rru] \arrow[rr] \arrow[rrd] \arrow[rrddd] \arrow[rrdddd] \arrow[rrddddd] &  & y_2 \\
y_3 \arrow[rru]     &  &                                                                    &  &                                                                                                                                  &  & y_3 \\
                    &  &                                                                    &  &                                                                                                                                  &  &     \\
z_1 \arrow[rruuu]   &  &                                                                    &  &                                                                                                                                  &  & z_1 \\
z_2 \arrow[rruuuu]  &  &                                                                    &  &                                                                                                                                  &  & z_2 \\
z_3 \arrow[rruuuuu] &  &                                                                    &  &                                                                                                                                  &  & z_3
\end{tikzcd}
    \]
    Similarly, one can find cuts on $Q_G$ which are not $\hat{T}$-invariant, but if we for example turn all three vertices coming from a stabilised point from source to sink, we again obtain a $\hat{T}$-invariant cut, and we can see this reflected in the quiver for $N$. 
\end{Exp}

\begin{Rem}
    The procedure we performed in the example above is called \emph{cut mutation}, which corresponds to $2$-APR tilting of $2$-representation infinite algebras. In \cite{DramburgGasanova2}, we showed that for McKay quivers $Q_N$ of abelian groups $N$, the mutation class of a given cut can naturally be endowed with the structure of a finite distributive lattice. Note that for groups of type (C), certain elements in the lattice for $Q_N$ correspond to cuts for $Q_G$, and one can pass between them by ``invariant mutation'', i.e. by performing successive mutations at all vertices lying in a given orbit. It would therefore be interesting to see whether the cuts for $Q_G$ can also be endowed with a lattice structure, and how this structure is related to the one for the mutation class on $Q_N$. We are currently not aware of a proof nor of a counterexample that mutation is transitive on the set of all cuts of $Q_G$. While this is not true for quivers $Q_N$ of abelian groups $N$, it may be expected to hold for groups of type (C) since the existence of cuts for $Q_G$ is tied to the existence of one specific mutation class for $Q_N$. 
\end{Rem}

Producing the same correspondence between cuts on $R \ast N$ and $R \ast G$ in type (D) is slightly more involved. However, since $K \simeq S_3 = C_3 \rtimes C_2$ is solvable in this case, one can recover 
\[ ((R \ast G) \ast \hat{K}) \ast \hat{C_3} \simeq_M (R \ast (N \rtimes C_3)) \ast \hat{C_3} \simeq_M R \ast N  \]
and transfer cuts as long as all intermediate cuts are invariant under the respective actions.

\section*{Acknowledgements}
We thank Martin Herschend and Mads Hustad Sandøy for many helpful discussions and comments. The second author was supported by a fellowship from the Wenner-Gren Foundations (grant WGF2022-0052).

\printbibliography

\end{document}